\newcommand{\reals}{\mbox{$\mathbb R$}}
\newcommand{\nats}{\mbox{$\mathbb N$}}
\newcommand{\ints}{\mbox{$\mathbb Z$}}
\newcommand{\power}{\mbox{$\mathbb P$}}
\newcommand{\comment}[1]{}
\def\squarebox#1{\hbox to #1{\hfill\vbox to #1{\vfill}}}
\def\qed{\hspace*{\fill}
        \vbox{\hrule\hbox{\vrule\squarebox{.667em}\vrule}\hrule}\smallskip}
\theoremstyle{plain}
\newtheorem{lemma}{Lemma}[section]
\newtheorem{theorem}[lemma]{Theorem}
\newtheorem{corollary}[lemma]{Corollary}
\newtheorem{proposition}[lemma]{Proposition}
\theoremstyle{definition}
\newtheorem{claim}[lemma]{Claim}
\newtheorem{observation}[lemma]{Observation}
\newtheorem{definition}[lemma]{Definition}
\newtheorem{convention}[lemma]{Convention}
\newtheorem{question}[lemma]{Question}
\newtheorem{example}[lemma]{Example}
\newtheorem*{rmk*}{Remark}
\newtheorem*{rmks*}{Remarks}
\newtheorem*{conventions*}{Conventions}
\newtheorem*{convention*}{Convention}
\def\squareforqed{\hbox{\rlap{$\sqcap$}$\sqcup$}}
\def\qed{\ifmmode\squareforqed\else{\unskip\nobreak\hfil
\penalty50\hskip1em\null\nobreak\hfil\squareforqed
\parfillskip=0pt\finalhyphendemerits=0\endgraf}\fi}
\newlength{\tablength}
\newlength{\spacelength}
\newcommand{\tabstar}{\hspace*{\tablength}}
\newcommand{\spacestar}{\hspace*{\spacelength}}
\def\obeytabs{\catcode`\^^I=\active}
{\obeytabs\global\let^^I=\tabstar}
{\obeyspaces\global\let =\spacestar}
\newenvironment{display}{\begingroup\obeylines\obeyspaces\obeytabs}{\endgroup}
\newenvironment{prog}{\begin{display}\parskip0pt\sf}{\end{display}}
\pgfplotsset{compat=1.18}
\author{Geir Agnarsson}
\address{Department of Mathematical Sciences \\ 
George Mason University \\ Fairfax, VA  22030}
\email{geir@math.gmu.edu}
\author{Elie Alhajjar}
\address{Department of Mathematical Sciences \\
U.~S.~Military Academy at West Point \\ West Point, NY 10996}
\email{elie.alhajjar@westpoint.edu}
\author{Aleyah Dawkins}
\address{Department of Mathematical Sciences \\ 
George Mason University \\ Fairfax, VA  22030}
\email{adawkin@gmu.edu}
\title{On locally finite ordered rooted trees and their rooted subtrees}
\subjclass[MSC2020]{05C05, 05C30, 68R05}
\keywords{Game-Over security model,
  ordered rooted trees,
  weighted ordered trees}
\date{\today}
\begin{document}

\begin{abstract}
In this article we compare the known dynamical polynomial time algorithm 
for the game-over attack strategy, to that of the brute force approach; 
of checking all the ordered rooted subtrees of a given tree that represents 
a given computer network. Our approach is purely enumerative and combinatorial 
in nature. We first revisit known results about a doubly exponential sequence
and generalize them. 
We then consider both finite and locally finite ordered rooted trees (LFOR-trees), 
and the class of their finite ordered rooted subtrees of bounded height, describing 
completely the LFOR-trees with no leaves where the number of ordered rooted 
subtrees of height at most $h$ are bounded by a polynomial in $h$. We finally  
consider general LFOR-trees where each level can have leaves and determine 
conditions for the number of ordered rooted subtrees of height at most $h$ 
to be bounded by a polynomial in $h$. 
\end{abstract} 

\maketitle

\section{Introduction}
\label{sec:intro}

This paper was originally inspired by some heuristic computations where
the brute force approach, of checking all possible scenarios, seemed to
be doing equally as well as known polynomial time algorithms. Although
our results are purely combinatorial and enumerative
in nature, we will briefly elaborate and explain, in the next few paragraphs,
how the problems we address in this paper, the definitions and our
discussions, stem from theoretical models of computer networks security. 

Security in computer networks, in particular cyber-security systems, have
long utilized layered security, commonly referred to as \emph{defense-in-depth}
strategies, where valuable assets are hidden behind many different
layers of the computer networks~\cite{Scarfone}. For example, a host-based
intrusion detection and prevention system may layer security through tools
such as host-based firewalls, encryption and shims, whereas a network-based
intrusion detection and prevention system may provide defense-in-depth
through an application layer, a transport layer, an Internet protocol (IP)
layer and a hardware layer. Many organizations also require the use of strong
authentication for remote access to intrusion detection and prevention
system components, such as two-factor authentication, which provides
an additional layer of security. Mathematically, rooted trees can be used
in various ways to model both attacks on such computer networks and also
the networks themselves. We briefly discuss two common ways trees are
currently being used.

{\sc One model:} To break into such a system and obtain
a valuable asset requires several layers of security to be penetrated.
One useful way to model such threats to a system is with decision making
trees called \emph{attack trees};
where the root corresponds to an attacker's goal and a child of a vertex
must be satisfied or ``fulfilled'' before that vertex can be reached.
Schneier was one of the first to use attack trees to model the security
of systems in a methodical way~\cite{Schneier}. This use of attack
trees to analyze the threat against a system has since been formalized
by Mauw and Oostdijk in~\cite{Mauw-Oostdijk}. More information on
attack trees and attack graphs as applied in cyber-security can be found
in~\cite{MEL, LI, ICLWB, LDB} and in the more software-based approach
in~\cite{SPEC}.

{\sc Another model:}
Rooted trees can also be used to model the computer network systems themselves
directly in a graph theoretic way. Namely, the impetus for the work in this
paper are the models of layered aspect of a cyber-security system as
originally introduced in~\cite{A-complexity} and~\cite{acta-cyb-1}, and
further analyzed in~\cite{acta-cyb-2}, in terms of a doubly-weighted
tree $(T,c,p,B,G)$, where
$T$ is a tree $T$ rooted at a vertex $r$,
$c$ is a \emph{penetrating-cost edge function} on $T$,
$p$ is a \emph{target-acquisition vertex function} on $T$,
$B$ is the \emph{attacker’s budget}; a positive real value representing
the total resources of the attacker and
$G$ is the \emph{game-over threshold}; a positive real value representing
the total value of critical components from the computer network that
suffices for a potential attacker to defeat the network completely.
In these three mentioned papers, the problem of determining the existence of,
and computing, a rooted subtree $T'$ of $T$, also rooted at $r$, where
(a) the edges of $T'$ sum up to a real number that is at most $B$, the
attacker's budget, and
(b) the non-root vertices of $T'$ sum up to a real number that is at
least $G$, the game-over threshold.
This model captures the notion that there is a cost associated with
penetrating each additional level of a system and that attackers have
finite resources to utilize in a cyber-attack. This model also highlights
the importance of designing systems that have fewest possible 
\emph{game-over components}; single components, represented as rooted subtrees $T'$, 
that achieve the game-over threshold, so important that once an adversary has obtained them,
the entire system is compromised.

In general, finding a game-over component is an intractable NP-complete
problem (even when the attacker knows the edge function $p$ and 
the vertex function $c$) and it is therefore highly unlikely to be amenable 
to polynomial-time solutions. For example, the simple case of a computer 
network consisting of a work station connected to $n$ distinct computers is 
represented by a doubly-weighted star $T$ rooted at $r$ and with $n$ leaves. 
In this case determining the existence of a rooted subtree $T'$ of $T$ within 
the attacker's budget $B$ of total vertex weight at least $G$ is equivalent to 
the well-known 0/1-Knapsack Decision  Problem~\cite[p.~65]{Garey}. However, 
there are many special cases where finding a game-over component 
(i.e.~a game-over rooted subtree $T'$ of $T$) can be solved in polynomial time 
by using recursion and dynamic programming~\cite{acta-cyb-1}. 

What is worth noting, and what is the main reason for this very paper, is that
for small networks with only a handful of connected computers, say a network
of $n\leq 25$ computers, then the direct brute force approach for both
determining the existence of and for finding the optimal game-over component
when we know it exists, works quite well~\cite{Ray}
and has computational complexity comparable with known polynomial
time methods from~\cite{acta-cyb-1}. When using the brute force method
for computing the game-over component, one must consider all rooted subtrees
$T'$ of a given tree $T$ that we assume has $n\in\nats$ non-root vertices.
However, when enumerating all these rooted subtrees of $T$, in fact all ordered
rooted subtrees of $T$, if we assume $T$ to be ordered, then it is often
more convenient to consider trees $T$ of a certain class as subtrees of
a special infinite tree of infinite height
(see Section~\ref{sec:infinite-OR-trees}) and focus on finite subtrees
of bounded height $h\in\nats$ instead of considering trees $T$ with at
most $n\in\nats$ non-root vertices. Such enumerations of various ordered
rooted trees have been considered by many. Aho and Sloane considered ordered
rooted binary trees, giving an exact formula in terms of the height $h$ of $T$
as a doubly exponential expression in terms of $h$~\cite{Aho-Sloane}.
Recently, a doubly exponential asymptotic formula for the number of
leaf-induced subtrees in a complete $k$-ary tree of a given height
was given~\cite{DossouOlory-Boadi}. Doubly exponential expressions do not
occur too often in computation simply since they grow so rapidly.
In addition to~\cite{Aho-Sloane,DossouOlory-Boadi} doubly exponential
sequences are also discussed and analyzed
in~\cite[p.~97, 100, 101, 109, 147]{Graham-Knuth-Patashnik}.

The remainder of this article is organized as follows:

In Section~\ref{sec:or-subtrees} we compute an exact formula for the number of
ordered rooted subtrees of a complete $k$-ary ordered rooted tree of
a given height $h$. This formula, given in Corollary~\ref{cor:t-closed-form},
is a doubly exponential expression and has a base number $c(k)\in\reals$
which we then analyze further for the first time. The main novel result in this section
describes the asymptotic behaviour of $c(k)$ as $k$ tends to infinity in
Theorem~\ref{thm:c(k)}.

In Section~\ref{sec:infinite-OR-trees} we consider locally finite ordered rooted
trees (or {\em LFOR-trees}, see definition~\ref{def:inf-tree}) and the class of their 
finite ordered rooted subtrees of bounded height. We then describe completely 
the LFOR-i-trees (LFOR-trees with no leaves) where
the number of ordered rooted subtrees of height at most $h$ are bounded
by a polynomial in $h$ for $h$ large enough (Theorem~\ref{thm:poly-iff}). These turn out to
be exactly the LFOR-i-trees of bounded width when they are viewed as posets
in the natural way. We then determine exactly the threshold function in terms
of the height $h$ for which an easily expressible bound from
Observation~\ref{obs:easy-s-bounds} will yield a natural power function
for the number of ordered rooted subtrees as described in
Proposition~\ref{prp:power-h}. 
The class of these threshold functions $h\mapsto\lambda_h$ can be expressed 
in terms of the Lambert W~function and we show they are all equivalent 
in that the ratio between any two of them tends to $1$ when $h$ tends to infinity 
in Corollary~\ref{cor:Lambert}. Other results on these threshold functions are also
presented. Specific examples of LFOR-i-trees are then provided
to illuminate the presented results. Finally, we prove some theoretical results about
the number of ordered rooted subtrees of height at most $h$ and its asymptotic behaviour
in Theorem~\ref{thm:poly-iff}.

In Section~\ref{sec:LFOR-trees} we discuss general LFOR-trees;
where each level can have some leaves. We define {\em thin} and {\em fat} LFOR-trees and determine
the leading coefficient of the number of ordered rooted subtrees of
height at most $h$; a polynomial in $h$ if the LFOR-tree is thin and
has a finite width as described in Theorem~\ref{thm:LFOR-thin-poly} and prove a corollary of this.

Finally, in Section~\ref{sec:summary} we summarize the main results and put them
into more general perspective.

\section{The number of ordered rooted subtrees of a given height}
\label{sec:or-subtrees}

The set of integers will be denoted by $\ints$, the
set of positive natural numbers by $\nats$ and the set of
non-negative integers $\nats\cup\{0\}$ by ${\nats}_0$. For each
$n\in\nats$ we let $[n] := \{1,\ldots,n\}$. The set of real numbers 
will be denoted by ${\reals}$ and the set of positive real numbers
by ${\reals}_+$. 
We write $f(x)\sim g(x)$ if 
$\lim_{n\rightarrow\infty}f(n)/g(n)= 1$ and we will further 
say (by abuse of this notation) that 
``$f(n)\sim g(n)$ for infinitely many $n$" if there is a 
sequence $(n_i)_{i\geq 0}$ of distinct natural numbers with
$\lim_{i\rightarrow\infty}f(n_i)/g(n_i) = 1$. Similarly, 
we denote by $f(n) = \Theta(g(n))$ if there are positive 
real constants $A$ and $B$ such that 
$Ag(n) \leq f(n) \leq Bg(n)$ for all $n\geq C$ where $C$ is some
constant. We will likewise say (by abuse of this notation) that ``$f(n) = \Theta(g(n))$ for infinitely many $n$" if there is a 
sequence of natural numbers $(n_i)_{i\geq 0}$ with
$Ag(n_i) \leq f(n_i) \leq Bg(n_i)$ for all $i\in\nats$. Finally, we
denote by $f(n) = o(g(n))$ if $f(n)/g(n)$ tends to zero when $n$ tends to infinity.

All graphs in this article will be simple graphs. In particular, a tree
will be a simple graph. An ordered rooted tree will be referred to as an
{\em OR-tree}. A subtree of an OR-tree $T$ is always itself an OR-tree,
but its root is not necessarily the root of $T$. A subtree of an OR-tree $T$
that has the same root as $T$ will be referred to as an {\em OR-subtree} of
$T$. Note that an OR-subtree is necessarily rooted, so in particular, an
OR-subtree has at least one vertex, namely the root.

{\sc Conventions:} We will to a large degree adhere to the usual convention of denoting
(i) the {\em order} of a graph (its number of vertices) by $n$,
(ii) the {\em size} of a graph (its number of edges) by $m$,
(iii) the {\em height} of an OR-tree (the largest number of edges
in a simple path from a leaf to its root) by $h$
(iv) the {\em distance} between two vertices by the number of edges in
the shortest path connecting them, and 
(v) the {\em level} $\ell(u)$ of a vertex $u$ in an OR-tree 
by the distance from $u$ to the root.

The following conventions assume $T$ is an OR-tree.
(vi) Denote by $S(T)$ the set of all OR-subtrees of $T$ and
let $s(T) = |S(T)|$ be the number of OR-subtrees of $T$.
(vii) The set of vertices of a tree $T$ will be denoted by $V(T)$.
(viii) The set of vertices of $T$ on level $h\in\nats$ will be denoted by
$V_h(T)$.
(ix) For a vertex $u\in V(T)$, denote the set of children of $u$ in $T$
by $C_T(u)$, or $C(u)$ when there is no danger of ambiguity. 
(x) For a vertex $u\in V(T)$, denote the set of descendants of $u$ in $T$
by $D_T(u)$, or $D(u)$ when there is no danger of ambiguity, and the
set of all descendants of $u$ together with $u$ by $D_T[u]$, or $D[u]$,
so $D[u] = D(u)\cup\{u\}$.
(xi) Denote the unique subtree in $T$ rooted at a vertex $u\in V(T)$
and containing all the descendants $D(u)$ by $T(u)$. So
$T(u)$ 
is the subtree of $T$ induced by $u$ and $D(u)$.

Every OR-tree is well suited for a variety of recursions.
Suppose OR-tree $T$ has root $r$ with $k$ children
$C(r) = \{u_1,\ldots, u_k\}$. If $|V(T)| = n$ and
$|V(T(u_i))| = n_i$ for each $i$, then $n = 1 + \sum_{i=1}^k n_i$. Since each
subtree $T(u_i)$ of $T$ is rooted at $u_i$, then clearly $n_i\geq 1$ for each
$i$. With this in mind and the obvious partition of $S(T)$ according to
whether or not each $u_i$ belongs to an OR-subtree, we have the following.
\begin{lemma}
\label{lmm:OR-subtrees}
For an OR-tree $T$ with a root $r$, let $C(r) = \{u_1,\ldots, u_k\}$
be the set of the children of $r$ in $T$.
In this case we have
$s(T)=\prod_{i=1}^k(1 + s(T(u_i)))$.
\end{lemma}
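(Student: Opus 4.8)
The plan is to exhibit a bijection between $S(T)$ and the Cartesian product $\prod_{i=1}^k\bigl(\{\varnothing\}\cup S(T(u_i))\bigr)$, from which the formula follows immediately by taking cardinalities: each set $S(T(u_i))$ is nonempty (it contains at least the one-vertex tree rooted at $u_i$), so $\{\varnothing\}\cup S(T(u_i))$ is a disjoint union of size $1 + s(T(u_i))$, and the product over $i$ has size $\prod_{i=1}^k(1+s(T(u_i)))$.

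First I would define the forward map. Given an OR-subtree $T'$ of $T$ (a subtree of $T$ containing $r$) and a child $u_i$ of $r$, set the $i$-th coordinate to $\varnothing$ if $u_i\notin V(T')$, and otherwise to the subgraph of $T'$ induced on $V(T')\cap D[u_i]$. The key observation making this well defined is that in $T$ the only path from any vertex of $D(u_i)$ to $r$ passes through $u_i$; hence if $u_i\notin V(T')$ then $T'$ contains no vertex of $D[u_i]$ at all, while if $u_i\in V(T')$ then the induced subgraph on $V(T')\cap D[u_i]$ is connected, contains $u_i$, is acyclic (being a subgraph of $T$), and is therefore an OR-subtree of $T(u_i)$. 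Note also that for distinct $i\neq j$ the vertex sets $D[u_i]$ and $D[u_j]$ are disjoint, so these branch pieces do not interfere with each other.

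Next I would define the inverse map: given a tuple $(X_1,\dots,X_k)$ with each $X_i\in\{\varnothing\}\cup S(T(u_i))$, let $T'$ be the subgraph of $T$ on the vertex set $\{r\}\cup\bigcup_{i\,:\,X_i\neq\varnothing}V(X_i)$. Since each nonempty $X_i$ is connected and contains $u_i$, which is adjacent to $r$ in $T$, the graph $T'$ is connected; being a subgraph of the tree $T$ it is acyclic, hence a subtree, and it contains $r$, with child order inherited from $T$, so it is a genuine OR-subtree of $T$. Checking that the two maps are mutually inverse is then routine: composing them recovers, coordinate by coordinate, exactly the piece of $T'$ living in each branch, using once more that every vertex of $T'$ other than $r$ lies in a unique $D[u_i]$ and forces $u_i\in V(T')$.

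Taking cardinalities gives $s(T) = \prod_{i=1}^k\bigl(1 + s(T(u_i))\bigr)$; when $k=0$ (i.e.\ $r$ is a leaf) the product is empty and equals $1$, consistent with $S(T)$ consisting of just the single-vertex tree. The only point requiring care — not really an obstacle — is the connectivity bookkeeping: that deleting a child $u_i$ forces deleting all of $D(u_i)$, and conversely that arbitrary independent choices across the branches always glue back to a connected subtree. An alternative packaging of the same idea, matching the hint in the text, is to partition $S(T)$ according to the set $A=\{\,i : u_i\in V(T')\,\}\subseteq[k]$; for fixed $A$ the number of OR-subtrees equals $\prod_{i\in A}s(T(u_i))$, and summing over all $A\subseteq[k]$ yields $\sum_{A\subseteq[k]}\prod_{i\in A}s(T(u_i)) = \prod_{i=1}^k(1+s(T(u_i)))$ by expanding the product.
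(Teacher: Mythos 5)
Your argument is correct and is essentially the paper's own: the paper justifies the lemma by "the obvious partition of $S(T)$ according to whether or not each $u_i$ belongs to an OR-subtree," which is exactly the bijection (and the alternative packaging over subsets $A\subseteq[k]$) that you spell out in detail. Your write-up simply makes explicit the connectivity bookkeeping that the paper leaves implicit.
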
 

Recall that for a $k\in\nats$, a {\em $k$-ary tree} is an OR-tree
where each vertex has at most $k$ children. A $2$-ary tree is then the
usual binary tree. As with binary trees, a $k$-ary tree is {\em full} (or
{\em regular}) if
every vertex has either $k$ or zero children, and a $k$-ary tree of height
$h$ is {\em complete} if at every level $\ell\in \{0,1,\ldots,h-1\}$ it
has precisely $k^\ell$ vertices, the maximum possible number of vertices
on level $\ell < h$. A complete $k$-ary tree of height $h$ whose level
$\ell = h$ also has $k^h$ vertices is {\em perfect}. A perfect $k$-ary
tree has (i) all its leaves on level $\ell = h$ and (ii) has the maximal
number $(k^{h+1}-1)/(k-1)$ of vertices among all $k$-ary trees of height $h$.

For $k\in\nats$, let ${\tau}_k(h)$ denote the perfect $k$-ary
tree of height $h$ and let $t_k(h) = |S({\tau}_k(h))|$ denote the number
of OR-subtrees of ${\tau}_k(h)$. Figure~\ref{3-ary-tree} shows the perfect $3$-ary tree ${\tau}_3(2)$.

\begin{center}
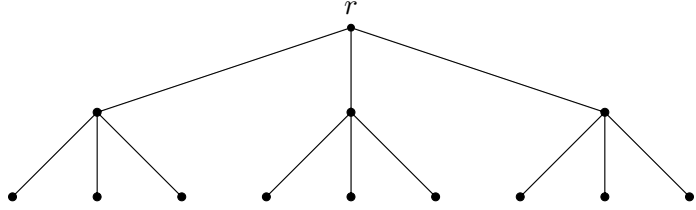
\begin{figure}

\begin{tikzpicture}[scale=.75]
 
\node [style={draw=black,circle,fill}, scale=.25, label=$r$] {}
     child {[fill] circle (2pt)
      child {[fill] circle (2pt)}
      child {[fill] circle (2pt)}
      child {[fill] circle (2pt)}
    }
    child[missing]
    child[missing]
    child {[fill] circle (2pt)
    child {[fill] circle (2pt)}
      child {[fill] circle (2pt)}
      child {[fill] circle (2pt)}
    }
    child[missing]
    child[missing]
    child {[fill] circle (2pt)
    child {[fill] circle (2pt)}
      child {[fill] circle (2pt)}
      child {[fill] circle (2pt)}
    };
 
\end{tikzpicture}
\caption{The $3$-ary tree $\tau_3(2)$ \label{3-ary-tree}}
\end{figure}
\end{center}

By Lemma~\ref{lmm:OR-subtrees}, the integer sequence $(t_k(h))_{h\geq 0}$
satisfies the recursion
\begin{equation}
\label{eqn:k-ary}
t_k(0) = 1, \ \ t_k(h+1) = (t_k(h) + 1)^k \mbox{ for $h\geq 0$.}
\end{equation}
These sequences $(t_k(h))_{h\geq 0}$, for varying $k\geq 2$, are
all clearly strictly increasing and 
are well-known. The first two of them are listed in
The On-Line Encyclopedia of Integer Sequences (OEIS)~\cite{oeis},
namely for $k = 2,3$, as the following rapidly growing sequences:
\[
\begin{array}{ll}
k = 2: & 1, 4, 25, 676, 458329, 210066388900, 44127887745906175987801,\ldots,\\
k = 3: & 1, 8, 729, 389017000, 58871587162270593034051001,\ldots,     
\end{array}
\]
listed as~\cite{A004019} and~\cite{A115590}, respectively.
Already for $k\geq 4$, the sequence $(t_k(h))_{h\geq 0}$ grows so fast
(known to be at least doubly exponential growth~\cite{Aho-Sloane})
that the sequences themselves are not listed in the OEIS.
For $k = 4,5$ the first few terms are in fact as follows:
\[
\begin{array}{ll}
k = 4: & 1,16,83521,48663522406470666256,\ldots, \\
k = 5: & 1,32,39135393,91801229324973413645775482048441660193,\ldots.
\end{array}
\]
Proceeding in the same way as was done in~\cite{Aho-Sloane} for the case
$k=2$, we will shortly derive a closed doubly exponential form for
the sequence $(t_k(h))_{h\geq 0}$ for general $k\in\nats$ as
\begin{equation}
\label{eqn:closed-form}
t_k(h) = \left\lfloor c(k)^{k^h}\right\rfloor - 1,
\end{equation}
where $c(k)\in{\reals}$ is a fixed positive real number
(see Corollary~\ref{cor:t-closed-form}), which can be
computed in an efficient manner. However, such an expression or formula
is more of theoretical value
since the evaluation of $t_k(h)$ for reasonably large values of $h$
will require very accurate approximation of the constants $c(k)$.
Indeed, as briefly discussed in~\cite[p.~109]{Graham-Knuth-Patashnik},
equations like (\ref{eqn:closed-form}) cannot really be considered to
be in closed form, because the constant $c(k)$ is computed from
the entries $t_k(h)$ themselves. 
However, expressions like (\ref{eqn:closed-form}) do shed an accurate light on
the asymptotic behavior of $t_k(h)$ for large $h$.

To the best of our knowledge, little is known about the real number
sequence $(c(k))_{k\geq 2}$ constituting the doubly exponential base number
for the sequence $(t_k(h))_{h\geq 0}$. The main part of this section will
be devoted to the derivation of some properties of $c(k)$
for varying $k\geq 2$. In order
to do that, however, we must first unravel the very method
from~\cite{Aho-Sloane} used to determine the existence of
the base number $c(2)$. Hence, we first proceed in a similar
fashion as in~\cite{Aho-Sloane} before continuing further
with more careful calculations to obtain more accuracy for
general $k\in\nats$.

Firstly, by definition $t_k(h) = |S(\tau_k(h))|$, for a fixed $h\geq 0$ 
the sequence $(t_k(h))_{k\geq 2}$ is an increasing sequence and strictly increasing 
for $h\geq 1$. Secondly, by induction on $h$ using (\ref{eqn:k-ary}) we then get 
the following.
\begin{claim}
\label{clm:increasing}
(i) For every fixed $k\geq 2$ the sequence $(t_k(h))_{h\geq 0}$ is strictly increasing.  
(ii) For every fixed $h\geq 1$ the sequence $(t_k(h))_{k\geq 2}$ is strictly increasing.
\end{claim}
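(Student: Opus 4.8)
The plan is to prove both parts by a short induction on the height $h$, using only the recursion~(\ref{eqn:k-ary}) together with the elementary fact that $t_k(h)\geq 1$ for every $h\geq 0$ and every $k\geq 2$. This last fact is itself an immediate induction: $t_k(0)=1$, and if $t_k(h)\geq 1$ then $t_k(h+1)=(t_k(h)+1)^k\geq 2^k\geq 1$. In particular $t_k(h)+1\geq 2>1$ always, which is exactly what legitimizes the ``raising to a larger exponent strictly increases the value'' step used below.

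For part (i), fix $k\geq 2$ and argue by induction on $h$ that $t_k(h+1)>t_k(h)$. The base case is $t_k(1)=(1+1)^k=2^k>1=t_k(0)$. For the inductive step, suppose $t_k(h)>t_k(h-1)$; since the map $x\mapsto (x+1)^k$ is strictly increasing on $[0,\infty)$, applying it to both sides gives $t_k(h+1)=(t_k(h)+1)^k>(t_k(h-1)+1)^k=t_k(h)$, as required. (Equivalently, and avoiding the monotone induction altogether, one may note directly from $t_k(h)\geq 1$ that $t_k(h+1)=(t_k(h)+1)^k\geq (t_k(h)+1)^2=t_k(h)^2+2t_k(h)+1>t_k(h)$.)

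For part (ii), fix $k\geq 2$ and argue by induction on $h\geq 1$ that $t_{k+1}(h)>t_k(h)$. The base case $h=1$ reads $t_{k+1}(1)=2^{k+1}>2^k=t_k(1)$. For the inductive step, assume $t_{k+1}(h)>t_k(h)$. Using first that $x\mapsto x^{k+1}$ is strictly increasing on $[1,\infty)$ and then that $t_k(h)+1\geq 2>1$, we obtain
\[
t_{k+1}(h+1)=(t_{k+1}(h)+1)^{k+1}>(t_k(h)+1)^{k+1}>(t_k(h)+1)^{k}=t_k(h+1),
\]
which closes the induction and hence shows that $(t_k(h))_{k\geq 2}$ is strictly increasing for each fixed $h\geq 1$.

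There is essentially no real obstacle here: the statement is elementary and follows purely formally from the recursion~(\ref{eqn:k-ary}). The only points deserving a moment's attention are (a) establishing the bound $t_k(h)\geq 1$ at the outset, since the strict inequality in the middle step of part~(ii) would fail if $t_k(h)$ could equal $0$, and (b) in part~(ii), keeping straight that the induction runs over the height $h$ with the arity held fixed, even though the arity $k$ is simultaneously the exponent that differs between the two sides.
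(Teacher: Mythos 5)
Your proof is correct. For part (i) you argue exactly as the paper does: induction on $h$ driven by the recursion $t_k(h+1)=(t_k(h)+1)^k$ together with the trivial bound $t_k(h)\geq 1$. For part (ii), however, you take a genuinely different route. The paper does not touch the recursion at all for this part: it simply invokes the combinatorial definition $t_k(h)=|S(\tau_k(h))|$ and observes that, since $\tau_k(h)$ sits inside $\tau_{k+1}(h)$ as a root-preserving subtree, every OR-subtree of the former is one of the latter (with strictly more available once $h\geq 1$), so the counts strictly increase in $k$. Your version replaces this one-line enumerative observation with an algebraic induction on $h$, comparing $(t_{k+1}(h)+1)^{k+1}$ with $(t_k(h)+1)^{k+1}$ and then with $(t_k(h)+1)^{k}$ using that the base exceeds $1$. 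What the paper's argument buys is brevity and transparency about \emph{why} the monotonicity holds (it is just containment of trees); what yours buys is independence from the combinatorial model — it works for any sequence defined by the recursion~(\ref{eqn:k-ary}) with $t_k(0)\geq 1$, and it makes explicit the point (which the paper leaves implicit) that the inequality is strict precisely because $t_k(h)+1\geq 2>1$. Both are complete; your care in flagging where strictness could fail if $t_k(h)$ were allowed to be $0$ is a reasonable extra precaution, not a necessity.
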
  
Secondly, we note that by trivially making the translated sequence
$({{t'}}_k(h))_{h\geq 0}$ from $(t_k(h))_{h\geq 0}$ by letting
${{t'}}_k(h) := {{t}}_k(h) + 1$ for each $h\geq 0$ we obtain from
(\ref{eqn:k-ary}) an equivalent recursion for ${{t'}}_h$:
\begin{equation}
\label{eqn:tprime-rec}
{{t'}}_k(0) = 2, \ \ {{t'}}_k(h+1) = {{t'}}_k(h)^k + 1 \mbox{ for $h\geq 1$.}
\end{equation}
For a given fixed $k\geq 2$, we have
\begin{equation}
\label{eqn:tprime-geq}
{{t'}}_k(h) \geq 2^{k^h} \mbox { for each $h\geq 0$.}
\end{equation}
The above inequality (\ref{eqn:tprime-geq}) is strict for $h\geq 1$ and is
obtained by a trivial induction on $h$: indeed ${{t'}}_k(0) = 2^{k^0}$
and for $h\geq 1$ we obtain
${{t'}}_k(h+1) = {{t'}}_k(h)^k + 1 > {{t'}}_k(h)^k \geq 2^{k^{h+1}}$.
Using the natural logarithm we let $x_k(h) = \log({{t'}}_k(h))$
and obtain from (\ref{eqn:tprime-geq}) that $x_k(h) \geq k^h\log2$
for each $h\geq 0$.

From (\ref{eqn:tprime-rec}) we obtain $x_k(0) = \log2$ and
$x_k(h+1) = \log({{t'}}_k(h)^k + 1) = kx_k(h) + {\alpha}_k(h)$,
where ${\alpha}_k(h) = \log(1 + 1/{{t'}}_k(h)^k)$ for each $h\geq 0$.
Using this recursion for $i = h, h-1, \ldots, 1$ and then summing up
we obtain
\begin{equation}
  \label{eqn:xequal-1}
x_k(h) = k^h\left(\log2 + \sum_{i=0}^{h-1}\frac{\alpha_k(i)}{k^{i+1}}\right).
\end{equation}
Since $\log(1+x)< x$ for positive real $x$ and $0 < {\alpha}_k(i) < 1/{{t'}}_k(i)^k \leq 1$, 
the series $\sum_{i=0}^{\infty}\frac{{\alpha}_k(i)}{k^{i+1}}$
converges absolutely to a positive real constant. 
Hence, we can make the following
definition.
\begin{definition}
\label{def:gamma(k)}
For each integer $k\geq 2$ let
\[
\gamma(k):=
\log2 + \sum_{i=0}^{\infty}\frac{{\alpha}_k(i)}{k^{i+1}} =
\log2 + \sum_{i=0}^{\infty}\frac{\log(1 + 1/{{t'}}_k(i)^k)}{k^{i+1}}
\in {\reals}.
\]
\end{definition}
In terms of $\gamma(k)$ from the above Definition~\ref{def:gamma(k)} we
get from (\ref{eqn:xequal-1}) the following equation
\begin{equation}
  \label{eqn:xequal-2}
x_k(h) = k^h\left(\gamma(k)-\sum_{i=h}^{\infty}\frac{\alpha_k(i)}{k^{i+1}}\right)
  = k^h\gamma(k) - \epsilon_k(h),
\end{equation}
where $\epsilon_k(h) := \sum_{i=0}^{\infty}\frac{{\alpha}_k(i+h)}{k^{i+1}}$.
We now briefly derive an upper bound for $\epsilon_k(h)$.
Again, since $0 < {\alpha}_k(i) < 1/{{t'}}_k(i)^k$ for each $i$, 
we obtain
\begin{equation}
  \label{eqn:e-sum-upperb}
\epsilon_k(h) < \sum_{i=0}^{\infty}\frac{1}{k^{i+1}{{t'}}_k(i+h)^k}.
\end{equation}
Since the sequence $({{t'}}_k(h))_{h\geq 0}$ is increasing we have
${{t'}}_k(i+h) \geq {{t'}}_k(h)$ for each $i\geq 0$ and by
(\ref{eqn:tprime-geq}) we therefore obtain the following concrete upper bound 
\begin{equation}
  \label{eqn:e-upperb}
  \epsilon_k(h) < \frac{1}{k{{t'}}_k(h)^k}\sum_{i=0}^{\infty}\frac{1}{k^i}
  = \frac{1}{{{t'}}_k(h)^k}\left(\frac{1}{k-1}\right)
  < \frac{1}{2^{k^{h+1}}}\left(\frac{1}{k-1}\right)
\end{equation}
for each $k\geq 2$ and $h\geq 0$. We can now make some observations.
Note that for $h=0$, as a consequence of (\ref{eqn:xequal-2}), that
$\gamma(k) = \log2 + \epsilon_k(0)$. This together with
(\ref{eqn:e-upperb}) yields $\gamma(k) < \log2 + \frac{1}{2^k(k-1)}$.
Since $\epsilon_k(h) > 0$ for each $h\geq 0$ we have
$\gamma(k) > \log2$ and hence, using the big-$O$ notation,
the following observation.
\begin{observation}
\label{obs:gamma-log2}  
The constant $\gamma(k)$ from Definition~\ref{def:gamma(k)} defined for each natural
number $k\geq 2$ satisfies $\gamma(k) = \log2 + O\left(\frac{1}{k2^k}\right)$,
and tends to $\log2$ from above when $k$ tends to infinity,
that is $\lim_{k\rightarrow\infty}\gamma(k) = {\log2}^{+}$.
\end{observation}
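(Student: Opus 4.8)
The plan is to read off everything from equation~(\ref{eqn:xequal-2}) specialized to $h=0$, combined with the explicit tail bound~(\ref{eqn:e-upperb}). First I would note that, by the initial condition in~(\ref{eqn:tprime-rec}), $x_k(0) = \log({t'}_k(0)) = \log 2$. Setting $h=0$ in~(\ref{eqn:xequal-2}) and using $k^0 = 1$ then yields $\log 2 = \gamma(k) - \epsilon_k(0)$, that is, $\gamma(k) = \log 2 + \epsilon_k(0)$. This reduces the whole statement to controlling the single nonnegative quantity $\epsilon_k(0) = \sum_{i=0}^{\infty}\alpha_k(i)/k^{i+1}$.

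Next I would invoke the two-sided estimate on $\epsilon_k(0)$ that is already available. On the one hand, each term $\alpha_k(i) = \log(1 + 1/{t'}_k(i)^k)$ is strictly positive, so $\epsilon_k(0) > 0$ and hence $\gamma(k) > \log 2$. On the other hand,~(\ref{eqn:e-upperb}) with $h=0$ gives $\epsilon_k(0) < \frac{1}{2^k}\left(\frac{1}{k-1}\right)$. Combining the two bounds, $\log 2 < \gamma(k) < \log 2 + \frac{1}{2^k(k-1)}$ for every $k\geq 2$. To put this in the stated big-$O$ form, I would use that $\frac{1}{k-1} \leq \frac{2}{k}$ for all $k\geq 2$, so that $\frac{1}{2^k(k-1)} \leq \frac{2}{k2^k}$, giving $\gamma(k) - \log 2 = \epsilon_k(0) = O\!\left(\frac{1}{k2^k}\right)$. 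Since this error term is strictly positive and tends to $0$ as $k\to\infty$, the limit $\lim_{k\to\infty}\gamma(k) = \log 2$ holds and the approach is from above, which is exactly the assertion $\lim_{k\to\infty}\gamma(k) = {\log 2}^{+}$.

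There is no substantial obstacle here: the argument is entirely bookkeeping with inequalities proved in the lines immediately preceding the statement. The only points that require a moment of care are (i) correctly reading off the $h=0$ instance of~(\ref{eqn:xequal-2}), remembering $k^0=1$ so the prefactor disappears, and (ii) justifying the passage from the concrete bound $\frac{1}{2^k(k-1)}$ to the cleaner-looking $O(1/(k2^k))$ uniformly over the full range $k\geq 2$, rather than merely asymptotically — which is handled by the elementary estimate $\frac{1}{k-1}\leq\frac{2}{k}$.
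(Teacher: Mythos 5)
Your proposal is correct and follows exactly the paper's own argument: specialize~(\ref{eqn:xequal-2}) to $h=0$ to get $\gamma(k)=\log2+\epsilon_k(0)$, then combine positivity of $\epsilon_k(0)$ with the bound $\epsilon_k(0)<\frac{1}{2^k(k-1)}$ from~(\ref{eqn:e-upperb}). The only addition is your explicit remark that $\frac{1}{k-1}\leq\frac{2}{k}$ for $k\geq 2$, which the paper leaves implicit when passing to the $O\left(\frac{1}{k2^k}\right)$ form.
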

We now also can show that the base number $c(k)$ from our original sequence
$(t_k(h))_{h\geq 0}$ from (\ref{eqn:k-ary}) describing the number of
OR-subtrees of ${\tau}_k(h)$ is given by $c(k) = e^{\gamma(k)}$.
By Definition~\ref{def:gamma(k)} we can compute $c(k) = e^{\gamma(k)}$ 
for the first few values of $k$ with arbitrary precision, for example:
\begin{eqnarray*}
  c(2) & = & 2.2585184505894653988377962400637318724342746971851, \\
  c(3) & = & 2.0804006677503193521177452323719035237099784935372, \\
  c(4) & = & 2.0305447043459101719473131282027956101130315891183, \\
  c(5) & = & 2.0123466191423631126123265595251297217493484489300.
\end{eqnarray*}
By the definition of $x_k(h)$, (\ref{eqn:xequal-2}) and the fact that
$e^x > 1+x$ for any real $x$ we get
\begin{equation}
\label{eqn:delta-k}
  {{t'}}_k(h)
  = e^{x_k(h)}
  = (e^{\gamma_k(h)})^{k^h}e^{-\epsilon_k(h)}
  > (e^{\gamma_k(h)})^{k^h}(1  - \epsilon_k(h))
  = (e^{\gamma_k(h)})^{k^h} - \delta_k(h),
\end{equation}
where $\delta_k(h)$ satisfies the following by (\ref{eqn:e-upperb}), since $k\geq 2$,
\[
\delta_k(h) = (e^{\gamma_k(h)})^{k^h}\epsilon_k(h) 
<  \frac{(e^{\gamma_k(h)})^{k^h}}{2^{k^{h+1}}}\left(\frac{1}{k-1}\right)
\le  \left(\frac{e^{\gamma(k)}}{2^k}\right)^{k^h}.
\]
As noted just before Observation~\ref{obs:gamma-log2} we have since $k\geq 2$
that  $\gamma(k) < \log2 + 1/4$ and so $e^{\gamma(k)} < 2e^{1/4}$ which implies
that $\delta_k(h) < e^{1/4}/2 \approx 0.642 < 1$. Since $\epsilon_k(h)>0$ we
have by (\ref{eqn:delta-k}) that
$(e^{\gamma_k(h)})^{k^h} - \delta_k(h) < {t'}_k(h) < (e^{\gamma_k(h)})^{k^h}$
where $\delta_k(h) < 1$, and hence we have the formula
${t'}_k(h) = \lfloor (e^{\gamma_k(h)})^{k^h} \rfloor$.
By Observation~\ref{obs:gamma-log2} we then have the following, in particular
establishing (\ref{eqn:closed-form}).
\begin{corollary}
\label{cor:t-closed-form}
If $(t_k(h))_{h\geq 0}$ is the sequence describing the number of OR-subtrees of
the perfect $k$-ary tree ${\tau}_k(h)$, then
\[
t_k(h) = \left\lfloor c(k)^{k^h}\right\rfloor - 1,
\]
where $c(k) = e^{\gamma(k)}$. In particular $c(k)$ tends to $2$
from above when $k$ tends to infinity,
that is $\lim_{k\rightarrow\infty}c(k) = {2}^{+}$.
\end{corollary}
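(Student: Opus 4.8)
The plan is to bolt together the estimates already developed above into the floor identity, and then read the limit off Observation~\ref{obs:gamma-log2}. Recall the translated sequence ${t'}_k(h) = t_k(h)+1$ obeys (\ref{eqn:tprime-rec}), that $x_k(h) = \log {t'}_k(h)$ unwinds to (\ref{eqn:xequal-2}), i.e.\ $x_k(h) = k^h\gamma(k) - \epsilon_k(h)$ with $\epsilon_k(h) > 0$ controlled by (\ref{eqn:e-upperb}), and that exponentiating gives ${t'}_k(h) = c(k)^{k^h} e^{-\epsilon_k(h)}$ with $c(k) = e^{\gamma(k)}$.

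First I would establish the sandwich $c(k)^{k^h} - 1 < {t'}_k(h) < c(k)^{k^h}$. The upper inequality is immediate from $\epsilon_k(h) > 0$. For the lower one, apply $e^{-x} > 1-x$ to get ${t'}_k(h) > c(k)^{k^h}(1 - \epsilon_k(h)) = c(k)^{k^h} - \delta_k(h)$ with $\delta_k(h) = c(k)^{k^h}\epsilon_k(h)$; then the bound (\ref{eqn:e-upperb}) together with the estimate $\gamma(k) < \log 2 + 1/4$ recorded just before Observation~\ref{obs:gamma-log2} yields $\delta_k(h) \le (e^{\gamma(k)}/2^k)^{k^h} \le (e^{1/4}/2)^{k^h} < 1$ for all $k \geq 2$ and $h \geq 0$. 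Since ${t'}_k(h)$ is a positive integer lying in the open interval $(c(k)^{k^h}-1,\,c(k)^{k^h})$, it must equal $\lfloor c(k)^{k^h}\rfloor$ — in passing this also forces $c(k)^{k^h}$ to be non-integral. Subtracting $1$ gives $t_k(h) = \lfloor c(k)^{k^h}\rfloor - 1$, which is exactly (\ref{eqn:closed-form}).

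For the limit statement I would combine $c(k) = e^{\gamma(k)}$ with Observation~\ref{obs:gamma-log2}: since $\gamma(k) \to \log 2$ and $\gamma(k) > \log 2$ for every $k \geq 2$, continuity and strict monotonicity of $\exp$ give $c(k) \to e^{\log 2} = 2$ with $c(k) > 2$ throughout, that is $c(k) \to 2^{+}$; a first-order Taylor expansion of $\exp$ at $\log 2$ even transfers the rate, $c(k) = 2 + O(1/(k2^k))$.

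The only genuinely delicate point is the inequality $\delta_k(h) < 1$, since the floor identity collapses without it; this is precisely why the careful bookkeeping on $\epsilon_k(h)$ and the explicit bound $\gamma(k) < \log 2 + 1/4$ were set up above. Everything else is routine manipulation of the recursion and of the elementary inequalities $\log(1+x) < x$ and $e^{-x} > 1-x$.
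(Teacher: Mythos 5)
Your proposal is correct and follows essentially the same route as the paper: the paper likewise exponentiates $x_k(h)=k^h\gamma(k)-\epsilon_k(h)$, uses $e^{-x}>1-x$ to get the lower bound $c(k)^{k^h}-\delta_k(h)$, bounds $\delta_k(h)$ via (\ref{eqn:e-upperb}) and $\gamma(k)<\log 2+1/4$ to conclude $\delta_k(h)<1$, deduces ${t'}_k(h)=\lfloor c(k)^{k^h}\rfloor$, and reads the limit $c(k)\rightarrow 2^{+}$ off Observation~\ref{obs:gamma-log2}. The only cosmetic difference is that you keep the exponent $k^h$ in the final bound $(e^{1/4}/2)^{k^h}<1$ where the paper drops to the single factor $e^{1/4}/2<1$; both suffice.
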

Slightly more can be said about the sequence $(c(k))_{k\geq 2}$ and how fast
it tends to $2$ when $k$ tends to infinity, which has some consequences
for our original sequence $(t_k(h))_{h\geq 0}$. We conclude this section
with some more accurate estimations of $\gamma(k)$ and hence $c(k)$ using
standard techniques and then state some remarks about OR-trees.

By Definition~\ref{def:gamma(k)} we firstly have the following lower bound 
\begin{equation}
\label{eqn:gamma-lowerb}  
\gamma(k)
= \log2 + \sum_{i=0}^{\infty}\frac{{\alpha}_k(i)}{k^{i+1}}
> \log2 + \frac{{\alpha}_k(0)}{k}
= \log2 + \frac{\log(1 + 1/2^k)}{k}
\end{equation}
and similarly we obtain
\begin{equation}
\label{eqn:gamma-equal}
\gamma(k)
= \log2 + \sum_{i=0}^{\infty}\frac{{\alpha}_k(i)}{k^{i+1}}
= \log2 + \frac{{\alpha}_0}{k}
  + \sum_{i=1}^{\infty}\frac{{\alpha}_k(i)}{k^{i+1}}
= \log2 + \frac{\log(1 + 1/2^k)}{k}
  + \sum_{i=1}^{\infty}\frac{{\alpha}_k(i)}{k^{i+1}}.
\end{equation}
Since $\log(1+x) < x$ for any positive real $x$, by
Claim~\ref{clm:increasing} and by (\ref{eqn:tprime-geq}) the above
sum can be bounded from above as follows
\begin{equation}
\label{eqn:sum-upperb}
\sum_{i=1}^{\infty}\frac{{\alpha}_k(i)}{k^{i+1}}
< \sum_{i=1}^{\infty}\frac{1/{t'}_k(i)^k}{k^{i+1}} 
< \sum_{i=1}^{\infty}\frac{1/2^{k^{i+1}}}{k^{i+1}} 
< \sum_{i=1}^{\infty}\frac{1/2^{k^{2}}}{k^{i+1}}
= \frac{1}{2^{k^{2}}k(k-1)}.
\end{equation}
From (\ref{eqn:gamma-lowerb}), (\ref{eqn:gamma-equal}) and
(\ref{eqn:sum-upperb}) we then have for $\gamma(k)$ that 
\begin{equation}
\label{eqn:gamma-tight}  
\log2 + \frac{\log(1 + 1/2^k)}{k} < \gamma(k) <
\log2 + \frac{\log(1 + 1/2^k)}{k} + \frac{1}{2^{k^{2}}k(k-1)},
\end{equation}
from which we obtain, by using the Taylor expansion of $e^x$
around $x = 0$, bounds for $c(k)$ that
\[
2(1 + 2^{-k})^{1/k} < c(k) <
2(1 + 2^{-k})^{1/k}\left(1 + O\left(\frac{1}{k^22^{k^2}}\right)\right).
\]
This, together with (\ref{eqn:gamma-tight}), yields
\begin{equation}
\label{eqn:gamma-c-big-O}
\gamma(k) = \log2 + \frac{\log(1 + 1/2^k)}{k}
+ O\left(\frac{1}{k^22^{k^{2}}}\right), \ \
c(k) = 2(1 + 2^{-k})^{1/k} + O\left(\frac{1}{k^22^{k^{2}}}\right).
\end{equation}
Lastly, by Claim~\ref{clm:increasing} the sequence
$({\alpha}_k(i))_{k\geq 2}$ is strictly decreasing and we obtain by definition
of $\gamma(k)$ and comparing term by term that if $k\leq k'$ then
\[
\gamma(k)
= \log2 + \sum_{i=0}^{\infty}\frac{{\alpha}_k(i)}{k^{i+1}} 
< \log2 + \sum_{i=0}^{\infty}\frac{{\alpha}_{k'}(i)}{{k'}^{i+1}} 
= \gamma(k').
\]
This together with (\ref{eqn:gamma-c-big-O}) yields the following summary.
\begin{theorem}
\label{thm:c(k)}
If $(t_k(h))_{h\geq 0}$ is the sequence describing the number of OR-subtrees of
the perfect $k$-ary tree ${\tau}_k(h)$, then the base number $c(k)$ from
Corollary~\ref{cor:t-closed-form} is strictly decreasing and satisfies
$c(k) = 2(1 + 2^{-k})^{1/k} + O(k^{-2}2^{-k^2})$ as $k\rightarrow\infty$.
\end{theorem}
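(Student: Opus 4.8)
The plan is to assemble the statement from estimates already established in this section, the only essentially new ingredient being the strict monotonicity of $k\mapsto c(k)$. Since $c(k)=e^{\gamma(k)}$ by Corollary~\ref{cor:t-closed-form} and $x\mapsto e^{x}$ is strictly increasing, both assertions reduce to statements about $\gamma(k)$: that the sequence $(\gamma(k))_{k\ge 2}$ is strictly decreasing, and that $\gamma(k)=\log 2+\tfrac1k\log(1+2^{-k})+O(k^{-2}2^{-k^2})$ as $k\to\infty$.

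For the monotonicity I would argue term by term on the absolutely convergent series $\gamma(k)=\log 2+\sum_{i\ge 0}\alpha_k(i)/k^{i+1}$ of Definition~\ref{def:gamma(k)}. Fix $i\ge 0$ and integers $2\le k<k'$. If $i=0$ then ${t'}_k(0)=2$ for every $k$, so $\alpha_k(0)=\log(1+2^{-k})$ is strictly decreasing in $k$. If $i\ge 1$, Claim~\ref{clm:increasing}(ii) gives ${t'}_k(i)=t_k(i)+1<t_{k'}(i)+1={t'}_{k'}(i)$; since these quantities all exceed $1$ and $k<k'$, raising to powers yields ${t'}_k(i)^{k}<{t'}_{k'}(i)^{k}<{t'}_{k'}(i)^{k'}$, hence $\alpha_k(i)=\log\!\bigl(1+1/{t'}_k(i)^{k}\bigr)>\alpha_{k'}(i)$. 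In every case $\alpha_k(i)\ge\alpha_{k'}(i)>0$, while the weight $k^{-(i+1)}$ strictly exceeds $(k')^{-(i+1)}$, so each term of the series defining $\gamma(k)$ dominates the corresponding term of the series defining $\gamma(k')$, strictly so already in the $i=0$ term. Summing and using absolute convergence gives $\gamma(k)>\gamma(k')$ (so the inequality displayed just before the theorem should be read with its direction reversed), and exponentiating gives $c(k)>c(k')$.

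For the asymptotic expansion essentially nothing remains: the two-sided bound (\ref{eqn:gamma-tight}) pins $\gamma(k)$ to $\log 2+\tfrac1k\log(1+2^{-k})$ up to an error of at most $\tfrac{1}{2^{k^{2}}k(k-1)}=O(k^{-2}2^{-k^2})$, and expanding $e^{x}$ about $x=0$ transports this, as in the derivation of (\ref{eqn:gamma-c-big-O}), to the multiplicative estimate $c(k)=2(1+2^{-k})^{1/k}\bigl(1+O(k^{-2}2^{-k^2})\bigr)$. Since the prefactor $2(1+2^{-k})^{1/k}$ is bounded (it lies between $2$ and $3$ for every $k\ge 2$), the factor $1+O(k^{-2}2^{-k^2})$ can be absorbed into an additive error $O(k^{-2}2^{-k^2})$, which is exactly the form asserted in Theorem~\ref{thm:c(k)}.

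I expect the one point needing care to be the term-by-term monotonicity: $\alpha_k(i)$ depends on $k$ both through ${t'}_k(i)$ and through the exponent $k$, and one must check that these two effects reinforce rather than cancel (both shrink $1/{t'}_k(i)^{k}$ as $k$ grows), and, separately, that the $i=0$ term — where ${t'}_k(0)$ does not vary with $k$ — still strictly decreases, so that the conclusion $c(k)>c(k')$ is strict and not merely weak. Everything else is routine bookkeeping with the error bounds already proved, together with the monotonicity and continuity of the exponential.
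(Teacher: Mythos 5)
Your proposal is correct and follows essentially the same route as the paper: term-by-term comparison of the series defining $\gamma(k)$ (using that the $\alpha_k(i)$ and the weights $k^{-(i+1)}$ both strictly decrease in $k$) for the monotonicity, and the already-established bounds (\ref{eqn:gamma-tight}) and (\ref{eqn:gamma-c-big-O}) for the asymptotic expansion. You are also right that the inequality displayed just before the theorem appears with the wrong direction in the paper --- the term-by-term comparison actually gives $\gamma(k)>\gamma(k')$ for $k<k'$, consistent with the theorem's claim that $c(k)$ is strictly decreasing and with the computed values $c(2)>c(3)>c(4)>c(5)$ --- so your more careful treatment of that step (including the check that the two effects of $k$ on $\alpha_k(i)$ reinforce rather than cancel) is a genuine, if minor, improvement.
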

{\sc Remark:} That the number of OR-subtrees of ${\tau}_k(h)$ is by
Corollary~\ref{cor:t-closed-form} given by
$t_k(h) = \lfloor c(k)^{k^h} \rfloor - 1$,
where $(c(k)_{k\geq 2})$ is strictly decreasing and tends to $2$ from above
by the above Theorem~\ref{thm:c(k)} when $k$ tends to infinity, might seem
counter intuitive at first sight. However, when viewing the function
$t_k(h)$ in terms of $n = |V({\tau}_k(h))|$, the number of vertices of
the perfect $k$-ary tree ${\tau}_k(h)$, we have
$n = n_k(h) = (k^{h+1}-1)/(k-1)$ and comparing the exponent of $t_k(h)$
to $n_k(h)$ we obtain
\begin{equation}
  \label{eqn:singly-exp}
\frac{\log(t_k(h))}{n_k(h)}\xrightarrow[h\to\infty]{}
\left(1 - \frac{1}{k}\right)\gamma(k) =
\left(1 - \frac{1}{k}\right)\log2 + O\left(\frac{1}{k2^k}\right),
\end{equation}
which, unlike $\gamma(k)$ and $c(k)$,
is a strictly {\em increasing} function in terms of $k$. Therefore, as a
singly exponential function in terms of the number of vertices
$n_k(h)$ of ${\tau}_k(h)$, the base number for $t_k(h)$ is
indeed an increasing function in terms of $n$, the number of vertices
of the tree ${\tau}_k(h)$.

Finally, we note that the left hand side of
(\ref{eqn:singly-exp}) tends upward to $\log2$ when $k$ tends to infinity,
the very ratio of $\log(|\power([n_k(h)])|)/n_k(h)$, where $\power([n_k(h)])$
is set of all subsets of $[n_k(h)] = \{1,2,\ldots,n_k(h)\}$. 

\section{Infinite ordered rooted trees}
\label{sec:infinite-OR-trees}

The perfect $k$-ary trees $\tau_k(h)$ of height $h$ considered in
the previous section can be viewed as the subtrees of an infinite
$k$-ary OR-tree where every vertex has exactly $k$ children. This infinite
tree has no leaves and is ``locally finite'' in the sense that each OR-subtree
of it induced by vertices of height $h$ or less is $\tau_k(h)$, a finite
tree. In this section we expand on this idea, considering classes of OR-trees that can be viewed
as OR-subtrees of a given infinite ``mother'' OR-tree $\mathbf{T}$.
We then investigate the OR-subtrees ${\mathbf{T}}_h$ of $\mathbf{T}$
that are induced by the finitely many vertices of $\mathbf{T}$ on levels
$h$ or less to describe when the number of OR-subtrees of height at most $h$ is a polynomial in $h$.
\begin{definition}
\label{def:inf-tree}
An {\em infinite rooted} tree $(\mathbf{T},r)$
is a simple connected graph $\mathbf{T}$ on infinitely many
vertices with a designated root $r\in V(\mathbf{T})$ and no cycles. 

An {\em infinite OR-tree} is an infinite rooted tree where the vertices on every
level have a designated left-to-right order.

An infinite rooted tree $\mathbf{T}$ is {\em locally finite} if for
each nonnegative integer $h$ the subtree ${\mathbf{T}}_h$ of $\mathbf{T}$
induced by all the vertices on level $h$ or less is a finite rooted tree.

A {\em LFOR-tree} is a locally finite infinite OR-tree.
\end{definition}

An example of an LFOR-tree is pictured in Figure~\ref{lfortree}.

\begin{figure}

\begin{center}

\begin{tikzpicture}[scale=.75]
 
\node [style={draw=black,circle,fill}, scale=.25, label=$r$] {}
     child {[fill] circle (2pt)
      child {[fill] circle (2pt)
      child {[fill] circle (2pt) 
      child {[fill] circle (2pt) 
      child {[fill] circle (2pt) {node[below]{$\vdots$}}}}}
      child {[fill] circle (2pt)}}
    }
    child[missing]
    child {[fill] circle (2pt)
    child {[fill] circle (2pt)
    child {[fill] circle (2pt)
    child {[fill] circle (2pt)child {[fill] circle (2pt) {node[below]{$\vdots$}}}}
    child {[fill] circle (2pt)}}}
    child {[fill] circle (2pt)
    child {[fill] circle (2pt)
    child {[fill] circle (2pt)
    child {[fill] circle (2pt) {node[below]{$\vdots$}}}}}}}
    child[missing]
    child[missing]
    child {[fill] circle (2pt)
    child {[fill] circle (2pt)
    child {[fill] circle (2pt)
    child {[fill] circle (2pt)
    child {[fill] circle (2pt) {node[below]{$\vdots$}}}}}}
    child[missing]
    child {[fill] circle (2pt)
    child {[fill] circle (2pt)
    child {[fill] circle (2pt)
    child {[fill] circle (2pt) {node[below]{$\vdots$}}}}
    child {[fill] circle (2pt)
    child {[fill] circle (2pt) {node[below]{$\vdots$}}}
    child {[fill] circle (2pt) {node[below]{$\vdots$}}}}
    child {[fill] circle (2pt)}}}}
    child[missing]
    child[missing]
    child {[fill] circle (2pt)
      child {[fill] circle (2pt)
      child {[fill] circle (2pt)}
      child {[fill] circle (2pt)}}
    };
 
\end{tikzpicture}

\caption{An LFOR-tree}\label{lfortree}

\end{center}

\end{figure}
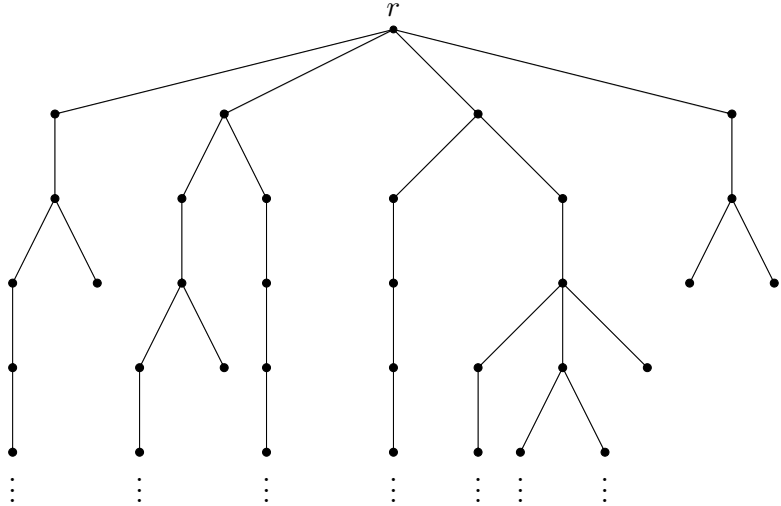

{\sc Remarks:}
(i) All the conventions and notations labeled (i) -- (xi)
in the beginning of the Section~\ref{sec:or-subtrees}
carry over to infinite rooted trees as well without any ambiguities.

(ii) Note that the condition of an infinite rooted tree $\mathbf{T}$
being locally finite is a stronger condition than $\mathbf{T}$ being
locally finite as a paritally ordered set, or {\em poset}, 
where $\mathbf{T}$ is viewed as a poset
in the natural way where the root is the sole maximum element,
since an infinite ordered tree with a vertex of infinite degree
certainly has the property that each interval
$[u,v] = \{w\in V(\mathbf{T}) : u\leq w\leq v\}$ is finite as each such interval contains the vertices
on the unique (and finite) path between $u$ and $v$ in $\mathbf{T}$.
\begin{example}
\label{exa:k-ary}
An infinite $k$-ary OR-tree where each vertex, including
the root $r$, has exactly $k$ children is an LFOR-tree with each
${\mathbf{T}}_h = \tau_k(h)$, the usual $k$-ary tree of height $h$.
\end{example}
\begin{example}
\label{exa:k-caterp}  
An infinite rooted tree where exactly one vertex on each
level, including the root on $0$-th level, has $k\geq 2$ children is
an LFOR-tree with each ${\mathbf{T}}_h$ being a caterpillar tree on
$hk+1$ vertices where $h(k-1)+1$ are leaves and the
central path has $h$ vertices.
\end{example}

As mentioned in the above remark, each rooted tree
(finite or not) is naturally a poset with its root as its maximum element. Recall an \emph{antichain} is a subset of a poset where any two distinct elements in the subset are incomparable. If the tree $T$ is finite, then the set of its leaves forms an
antichain when the tree is viewed as a poset. If $A\subseteq V(T)$ forms
an antichain and $u\in A$ is not a leaf of $T$, then $d_T(u)\geq 2$ and
there are at least $d_T(u) - 1$ leaves in $T$ that are descendants of $u$
in $T$. Denote their set by $L_T(u)$ and so $|L_T(u)| \geq d_T(u) - 1$.
In this way we obtain a new antichain $A' = (A\setminus \{u\})\cup L_T(u)$
of $T$ with (i) fewer non-leaves and (ii) of cardinality at least that of
$A$. Continuing in this manner,
exchanging each non-leaf in the antichain with its descendants leaves, we
obtain an antichain in $T$ consisting solely of leaves of cardinality at least
that of $A$.
\begin{convention}
\label{cvn:w-lambda}
(i) For a poset $(P,\leq)$ denote its {\em width};
the maximum cardinality of an antichain in $P$, by $w(P)$. (ii) For
a rooted tree $(T,r)$ denote by $\lambda(T)$ the number of leaves of $T$
(excluding the root.)
\end{convention}
We then have the following.
\begin{observation}
\label{obs:w=l}
If $(T,r)$ is a rooted tree equipped with the natural partial order where
the root $r$ is the sole maximum element, then
we have $w(T) = \lambda(T)$.
\end{observation}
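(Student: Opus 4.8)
The plan is to establish $w(T)\ge\lambda(T)$ and $w(T)\le\lambda(T)$ separately; the second inequality is, in essence, the leaf-exchange procedure already set up in the paragraph preceding the statement, so the task is mainly to assemble it carefully. (The lone-vertex tree is degenerate and may be excluded; when $T$ has at least one edge it has at least one leaf, so $\lambda(T)\ge 1$.)

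For $w(T)\ge\lambda(T)$, I would observe that every leaf is a minimal element of the poset $(T,\le)$, since it has no children and hence nothing lies strictly below it, and distinct minimal elements of a poset are pairwise incomparable; thus the set $\mathcal{L}$ of all leaves of $T$ is an antichain of size $\lambda(T)$, whence $w(T)\ge\lambda(T)$.

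For $w(T)\le\lambda(T)$, I would let $A$ be an arbitrary antichain and iterate the exchange. If $A$ contains a non-leaf $u$ then $u\ne r$ (an antichain containing the maximum element $r$ equals $\{r\}$, which already satisfies $1\le\lambda(T)$), and because $T$ is finite the set $L_T(u)$ of leaf-descendants of $u$ is nonempty; put $A':=(A\setminus\{u\})\cup L_T(u)$. The point that genuinely needs checking is that $A'$ is again an antichain: for $\ell\in L_T(u)$ and $v\in A\setminus\{u\}$, the relation $v\le\ell$ would make $v$ an ancestor of $\ell$, i.e.\ a vertex on the unique (finite) path from $\ell$ to $r$, which also passes through $u$; this forces $u\le v$ or $v<u$, contradicting the incomparability of $u$ and $v$. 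The same reasoning shows $\ell\notin A$, so no cardinality is lost, and $\ell\le v$ is impossible since $\ell$ is minimal. As $A'$ has strictly fewer non-leaves than $A$, iterating terminates after finitely many steps at an antichain $A^{\ast}\subseteq\mathcal{L}$ with $|A^{\ast}|\ge|A|$; hence $|A|\le|\mathcal{L}|=\lambda(T)$. Taking $A$ to be a maximum antichain gives $w(T)\le\lambda(T)$, and together with the first inequality this proves $w(T)=\lambda(T)$.

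The main (and only) obstacle is the antichain-preservation check in the exchange step, which the preceding discussion passes over silently; it reduces to the elementary fact that the ancestors of a vertex $\ell$ are precisely the vertices lying on the unique finite path from $\ell$ to $r$ in $T$.
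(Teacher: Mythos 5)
Your proof is correct and follows essentially the same route as the paper: the paper's justification for this observation is precisely the leaf-exchange paragraph that precedes it, and you have simply supplied the antichain-preservation check that the paper leaves implicit. (One minor quibble: with the root as the \emph{maximum} element, $v\le\ell$ forces $v=\ell$ because the leaf $\ell$ is minimal, and it is the relation $\ell\le v$, i.e.\ $v$ an ancestor of $\ell$, that must be excluded via comparability with $u$ --- you have the two directions of $\le$ swapped, though every case of comparability is in fact covered by your argument.)
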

By Dilworth's Theorem~\cite{Dilworth},~\cite{Trotter} (and~\cite{Perles}
for a particularly simply proof) any finite poset
$(P,\leq)$ can be decomposed into $w = w(P)$ chains of finite length
$P = C_1\cup\cdots\cup C_w$ and this is the minimum number of chains $P$ can
be decomposed into although the chains themselves are not unique.
For a rooted tree $(T,r)$ with its natural partial order, such a decomposition
is easy to obtain:
list the leaves $u_1,\ldots, u_w$ in any order.
Let $C_1$ consist of all the vertices in the unique path from $u_1$ to
the root $r$ in $T$. Having obtained the chains $C_1,\ldots,C_{i-1}$, we let
$C_i$ be the vertices in the longest path from $u_i$ toward the root $r$
that does not contain any vertices from $C_1\cup\cdots\cup C_{i-1}$.
We now consider such a decomposition for LFOR-trees.

Let $\mathbf{T}$ be an LFOR-tree. Every vertex of $\mathbf{T}$ either
has infinitely many descendants or finitely many descendants. 
\begin{convention}
\label{cvn:i-f}
Let $\mathbf{T}$ be an LFOR-tree.
A vertex of $\mathbf{T}$ is an {\em i-vertex} if it has infinitely 
many descendants, and it is an {\em f-vertex} if it has finitely 
many descendants. 
\end{convention}
Since $\mathbf{T}$ has infinitely many vertices, there are infinitely many
i-vertices of $\mathbf{T}$. Every i-vertex must have at least one
i-vertex child and every child of an f-vertex is also an f-vertex.
\begin{definition}
\label{def:i-tree}
An LFOR-tree where each of its vertices is an i-vertex is an {\em LFOR-i-tree}.
\end{definition}
An example of an LFOR-i-tree is pictured in Figure~\ref{lforitree}. LFOR-i-trees have some nice properties that we will now discuss. The
following is clear from the definition.
\begin{observation}
\label{obs:i-tree}
An LFOR-i-tree $\mathbf{T}$ has no leaves and hence each of its
finite subtrees ${\mathbf{T}}_h$ has all its leaves on its lowest 
level $h$.
\end{observation}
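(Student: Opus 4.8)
The plan is to read off both assertions directly from the definitions, with the single substantive input being that an i-vertex, having infinitely many descendants, in particular has a child. For the first claim, a leaf of $\mathbf{T}$ would be a vertex with no children, hence with no descendants; but by Definition~\ref{def:i-tree} every vertex of an LFOR-i-tree is an i-vertex, so it has infinitely many descendants and therefore cannot be a leaf. Thus $\mathbf{T}$ has no leaves.

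For the second claim, fix $h\in{\nats}_0$ and consider the finite subtree ${\mathbf{T}}_h$ induced by the vertices of $\mathbf{T}$ on levels $0,1,\ldots,h$. I would argue that the leaf set of ${\mathbf{T}}_h$ is exactly $V_h(\mathbf{T})$. First, no vertex $v$ with $\ell(v)<h$ is a leaf of ${\mathbf{T}}_h$: since $v$ is an i-vertex it has a child $w$ in $\mathbf{T}$ (indeed it has an i-vertex child, as noted before Definition~\ref{def:i-tree}), and $\ell(w)=\ell(v)+1\le h$, so $w\in V({\mathbf{T}}_h)$ and $w$ is a child of $v$ in ${\mathbf{T}}_h$, witnessing that $v$ is internal there. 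Conversely, any vertex $u$ with $\ell(u)=h$ has all of its $\mathbf{T}$-children on level $h+1$, none of which belong to ${\mathbf{T}}_h$, so $u$ is a leaf of ${\mathbf{T}}_h$. Hence the leaves of ${\mathbf{T}}_h$ lie precisely on level $h$; and $V_h(\mathbf{T})\neq\varnothing$ because the root, being an i-vertex, begins an infinite descending path (each i-vertex contributing an i-vertex child), so level $h$ is indeed reached.

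There is no real obstacle here; the only point requiring a moment's care is not to conflate \emph{``no children''} with \emph{``finitely many descendants''} — it is precisely the fact that an i-vertex must have a child that converts ``no leaves in $\mathbf{T}$'' into ``all leaves of ${\mathbf{T}}_h$ sit on the bottom level $h$.''
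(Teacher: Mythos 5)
Your proof is correct and is exactly the routine unpacking of the definitions that the paper has in mind: it states this Observation without proof, declaring it ``clear from the definition.'' Your care in distinguishing ``has a child'' (which every i-vertex must, having infinitely many descendants) from ``is not a leaf of ${\mathbf{T}}_h$'' (which additionally needs that child to lie on level $\leq h$) is precisely the right reading, so nothing further is needed.
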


\begin{figure}

\begin{center}

\begin{tikzpicture}[scale=.75]
 
\node [style={draw=black,circle,fill}, scale=.25, label=$r$] {}
     child {[fill] circle (2pt)
      child {[fill] circle (2pt)
      child {[fill] circle (2pt) 
      child {[fill] circle (2pt) 
      child {[fill] circle (2pt) {node[below]{$\vdots$}}}}}}
    }
    child[missing]
    child {[fill] circle (2pt)
    child {[fill] circle (2pt)
    child {[fill] circle (2pt)
    child {[fill] circle (2pt)child {[fill] circle (2pt) {node[below]{$\vdots$}}}}}}
    child {[fill] circle (2pt)
    child {[fill] circle (2pt)
    child {[fill] circle (2pt)
    child {[fill] circle (2pt) {node[below]{$\vdots$}}}}}}}
    child[missing]
    child {[fill] circle (2pt)
    child {[fill] circle (2pt)
    child {[fill] circle (2pt)
    child {[fill] circle (2pt)
    child {[fill] circle (2pt) {node[below]{$\vdots$}}}}}}
    child {[fill] circle (2pt)
    child {[fill] circle (2pt)
    child {[fill] circle (2pt)
    child {[fill] circle (2pt) {node[below]{$\vdots$}}}}
    child {[fill] circle (2pt)
    child {[fill] circle (2pt) {node[below]{$\vdots$}}}
    child {[fill] circle (2pt) {node[below]{$\vdots$}}}}}}};
 
\end{tikzpicture}

\caption{An LFOR-i-tree}\label{lforitree}

\end{center}

\end{figure}
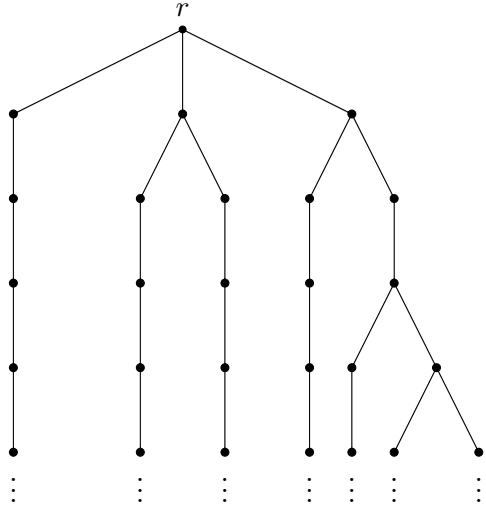

By the above Observation~\ref{obs:i-tree} the number of vertices of
an LFOR-i-tree $\mathbf{T}$ on level $h$ is then $\lambda(\mathbf{T}_h)$,
the number of leaves of $\mathbf{T}_h$. The integer sequence
$(\lambda(\mathbf{T}_h))_{h\geq 0}$ is increasing starting at $1$.
Let
\begin{equation}
\label{eqn:lambda}
\lambda(\mathbf{T}) = \lim_{h\rightarrow\infty}\lambda(\mathbf{T}_h),
\end{equation}  
so $\lambda(\mathbf{T})\in\nats\cup\{\infty\}$. By definition,
$\mathbf{T}$ has a countably infinite number of vertices and
since it is locally finite, each vertex has a finite degree.
Since $\mathbf{T}$ is an infinite OR-tree we have an ordering on its vertices
$V(\mathbf{T})$, lexicographically
(a) by level, and then 
(b) by their left-to-right order on each level.
Thus in this we have an ordering
$V(\mathbf{T}) = \{u_1,u_2,\ldots\}$. Similarly to
the case of a finite tree we can obtain a decomposition of $\mathbf{T}$
into infinite chains when $\mathbf{T}$ is equipped with its natural
partial order where each chain is order-reverse isomorphic to
$(\nats,\leq)$. The procedure is as follows:

Start with the root $u_1 = r$. Since $\mathbf{T}$ is an LFOR-i-tree there
 is an infinite chain $C_1$ emanating downward from $u_1$ in $\mathbf{T}$
that is formed by an infinite simple path from $u_1$ down through each level.
Having formed the infinite chains $C_1,\ldots,C_i$, we then proceed as follows:
(i) If $\mathbf{T}= C_1\cup\cdots\cup C_i$ then we are done.
Otherwise (ii) we choose the first vertex $u_{\nu(i+1)}\in V(\mathbf{T})$
that is not contained in $C_1\cup\cdots\cup C_i$. Since $\mathbf{T}$ is
an LFOR-i-tree there is an infinite chain $C_{i+1}$ emanating downward
from $u_{\nu(i+1)}$ in $\mathbf{T}$ and none of the vertices of this chain
is contained in any previous chain $C_1,\ldots,C_i$ (since $\mathbf{T}$
has no cycles). This process either terminates or goes on indefinitely.
Hence, the number of chains in this decomposition of $\mathbf{T}$ is
either finite or countably infinite. 

We note that
(i) each set of vertices on a fixed level of $\mathbf{T}$
must be contained in distinct chains, and so the number of chains
in the decomposition $\mathbf{T} = C_1\cup C_2\cup\cdots$ is at least
$\lambda(\mathbf{T})$ from (\ref{eqn:lambda}).
(ii) Conversely, for each chain $C_i$ there is a level
${\ell}(u_{\nu(i)})\in\nats$ of the maximum vertex of $C_i$,
such that $C_i$ contains exactly one vertex on each
level of ${\ell}(u_{\nu(i)})$ or greater. Note that the levels
$({\ell}(u_{\nu(i)})_{i\geq 1}$ of the maximum vertices $u_{\nu(i)}$ of the
chains $C_i$ form an increasing sequence in ${\nats}$. Therefore, for each
$i$, the chains $C_1,\ldots,C_i$ each contain exactly one vertex on level
${\ell}\geq {\ell}(u_{\nu(i)})$ and so $\lambda({\mathbf{T}}_{\ell})\geq i$.
As this holds for each $i$, we have the following analog of Dilworth's
Theorem for LFOR-i-trees. 
\begin{proposition}
\label{prp:cup-lambda}
Let $\mathbf{T}$ be an LFOR-i-tree equipped with the natural partial order that
makes the root the sole maximum element. If $\lambda(\mathbf{T})$ is as
in (\ref{eqn:lambda}), then $\mathbf{T}$ decomposes into infinite chains
\[
\mathbf{T} = \bigcup_{i=1}^{\lambda(\mathbf{T})} C_i,
\]
where each chain is order reverse isomorphic to $({\nats},\leq)$.
Further, this decomposition is minimal: there is no such decomposition
with fewer than $\lambda(\mathbf{T})$ chains. 
\end{proposition}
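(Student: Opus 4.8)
The plan is to verify that the greedy procedure set up just above the statement really does produce such a decomposition, and then to pin down the number of chains by a matching pair of bounds.

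\emph{Existence.} I would carry the single invariant that, after $i$ chains $C_1,\dots,C_i$ have been built, the set of not-yet-covered vertices is a disjoint union of full subtrees $\mathbf{T}(w)$. This holds for $i=0$, since the uncovered set is $\mathbf{T}(r)$, and it is preserved: in the lexicographic (level-first, then left-to-right) order on $V(\mathbf{T})$ the root of any subtree precedes all of that subtree's other vertices, so the first uncovered vertex $u_{\nu(i+1)}$ must be the root $w$ of one of the uncovered subtrees $\mathbf{T}(w)$; because every vertex of an LFOR-i-tree is an i-vertex and every i-vertex has an i-vertex child, we may descend from $w$ through i-vertices indefinitely, producing an infinite simple path $C_{i+1}\subseteq\mathbf{T}(w)$, and removing $C_{i+1}$ from $\mathbf{T}(w)$ leaves precisely the full subtrees hanging off that path. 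This invariant yields at once that the chains are pairwise disjoint and that the construction never stalls. Each $C_i$ descends exactly one level at a time, so it meets each level at most once and meets every level at or below the level of its top vertex, hence is order-reverse isomorphic to $({\nats},\le)$. Finally, the indices satisfy $\nu(1)<\nu(2)<\cdots$ (after step $i$ every vertex up to and including $u_{\nu(i)}$ lies in $C_1\cup\cdots\cup C_i$), so $\nu(j)\ge j$ and hence $u_j$ is covered once the $j$-th chain is formed (unless the process has already stopped with everything covered); therefore $\bigcup_i C_i=V(\mathbf{T})$.

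\emph{Number of chains, and minimality.} For the lower bound, observe that two comparable vertices of a rooted tree always lie on distinct levels, so every chain — in this decomposition, or in any chain decomposition of $\mathbf{T}$ — contains at most one vertex on each level; by Observation~\ref{obs:i-tree} level $h$ carries exactly $\lambda(\mathbf{T}_h)$ vertices, so any chain decomposition of $\mathbf{T}$ uses at least $\lambda(\mathbf{T}_h)$ chains for every $h$, hence at least $\lambda(\mathbf{T})=\lim_{h\to\infty}\lambda(\mathbf{T}_h)$. This already gives minimality and shows our decomposition has at least $\lambda(\mathbf{T})$ chains. For the reverse inequality there is nothing to prove if $\lambda(\mathbf{T})=\infty$, so assume $\lambda(\mathbf{T})=N<\infty$. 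The levels $\ell(u_{\nu(1)})\le\ell(u_{\nu(2)})\le\cdots$ of the chain tops are non-decreasing: when $u_{\nu(i+1)}$ is selected, every vertex lexicographically preceding $u_{\nu(i)}$ is already covered, so $u_{\nu(i+1)}$ cannot precede $u_{\nu(i)}$ and therefore lies on a level $\ge\ell(u_{\nu(i)})$. Consequently, if the procedure produced $N+1$ chains or more, then on any level $\ell\ge\ell(u_{\nu(N+1)})$ each of $C_1,\dots,C_{N+1}$ would contribute a distinct vertex, forcing $\lambda(\mathbf{T}_\ell)\ge N+1>N=\lambda(\mathbf{T})\ge\lambda(\mathbf{T}_\ell)$, a contradiction. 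Hence the procedure halts after exactly $\lambda(\mathbf{T})$ chains, and combining with the lower bound completes the proof.

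The only genuinely delicate point is the bookkeeping behind both the pairwise disjointness of the chains and the monotonicity of the levels $\ell(u_{\nu(i)})$; the clean route through it is the ``uncovered part is a disjoint union of full subtrees $\mathbf{T}(w)$'' invariant together with the fact that, in the level-first lexicographic order, the root of a subtree precedes all of its descendants. Everything else — the existence of the infinite descending paths, the at-most-one-vertex-per-level property of chains, and the limit $\lambda(\mathbf{T}_h)\to\lambda(\mathbf{T})$ — is immediate from the LFOR-i-tree hypothesis and Observation~\ref{obs:i-tree}.
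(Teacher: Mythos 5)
Your proof is correct and follows essentially the same route as the paper: the same greedy peeling of infinite descending paths in lexicographic order, the same lower bound from each level being an antichain, and the same upper bound from the monotonicity of the levels of the chain tops. Your explicit invariant (the uncovered set is a disjoint union of full subtrees) and the check that $\nu(j)\ge j$ merely make rigorous two points the paper passes over quickly (disjointness ``since $\mathbf{T}$ has no cycles'' and the fact that every vertex is eventually covered), so no substantive difference remains.
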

{\sc Remarks:}    
(i) We note that if $\lambda(\mathbf{T})\in\nats$ is finite, then the number
of chains in the decomposition in Proposition~\ref{prp:cup-lambda} is unique.
(ii) By a cardinality less than $\lambda(\mathbf{T})$ in
Proposition~\ref{prp:cup-lambda},
when $\lambda(\mathbf{T}) = \infty$ we mean a finite number from $\nats$.
(iii) Clearly, the chains $(C_i)_{i\geq 1}$ are not unique: a different
left-to-right order of vertices on each level will yield different chains.

\vspace{3 mm}

Consider the case when $\lambda(\mathbf{T})\in\nats$ is a finite number.
Since each antichain in $\mathbf{T}$ cannot contain more than one element
from each chain in the decomposition in Proposition~\ref{prp:cup-lambda}, we
have that the width of $\mathbf{T}$ satisfies
$w(\mathbf{T}) \leq \lambda(\mathbf{T})$. Also, by
Observation~\ref{obs:w=l} we have for each $h\in\nats$
that $w(\mathbf{T}) \geq w(\mathbf{T}_h) = \lambda(\mathbf{T}_h)$
for each $h$ and hence $w(\mathbf{T})\geq \lambda(\mathbf{T})$.
So, for an LFOR-i-tree $\mathbf{T}$ with $\lambda(\mathbf{T})$ finite,
we have $w(\mathbf{T}) = \lambda(\mathbf{T})$.

Consider an LFOR-i-tree $\mathbf{T}$ and the case when
$\lambda(\mathbf{T}) = \infty$ is infinite.
Since the set of each level $h$ forms an antichain in $\mathbf{T}$ we
clearly have that $\mathbf{T}$ contains an antichain of arbitrarily
large finite cardinality. We now argue that $\mathbf{T}$ contains an
infinite antichain.

First, we make a few observations. Since $\mathbf{T}$ is an LFOR-i-tree, we
then have $V_h(\mathbf{T}) = L(\mathbf{T}_h)$, the leaves of
$\mathbf{T}_h$.
Recall that $\mathbf{T}(u)$ denotes the LFOR-i-subtree of
$\mathbf{T}$ rooted at $u\in V(\mathbf{T})$ and contains all the
descendants of $u$ in $\mathbf{T}$.
For $h,k\in\nats$ with $h\leq k$
we have
$\lambda(\mathbf{T}_k) = \sum_{u\in V_h(\mathbf{T})}\lambda(\mathbf{T}(u)_k)$
and hence, by taking the limit $k\rightarrow\infty$ and let $h$ be arbitrary
but fixed, we get 
\begin{equation}
\label{eqn:l-partition}
\lambda(\mathbf{T}) = \sum_{u\in V_h(\mathbf{T})}\lambda(\mathbf{T}(u)).
\end{equation}
Since $\lambda(\mathbf{T}) = \infty$ and each $V_h(\mathbf{T})$ is a finite
set, then for sufficiently large $h_1$ we must have
(i) $\lambda(\mathbf{T}_{h_1}) = |V_{h_1}(\mathbf{T})| \geq 2$, and
(ii) $\lambda(\mathbf{T}(u_1)) = \infty$ for at least one
$u_1\in V_{h_1}(\mathbf{T})$ by (\ref{eqn:l-partition}).
Let $v_1\in V_{h_1}(\mathbf{T})\setminus\{u_1\}$.
Since $u_1$ and $v_1$ are on the same level $h$, the vertex $v_1$ is
incomparable to any vertex from $\mathbf{T}(u_1)$.
Assume we have chosen the integers $h_1,\ldots,h_i$ and
the vertices $u_1,\ldots,u_i$ and $v_1,\ldots,v_i$ in such a way that
$u_i$ and $v_i$ are distinct vertices from
$V_{h_i}(\mathbf{T}(u_{i-1}))$ and $\lambda(\mathbf{T}(u_i)) = \infty$.
Now let $h_{i+1}\in\nats$ be large enough for
$V_{h_{i+1}}(\mathbf{T}(u_i))$ to contain two or more vertices.
In this case we can choose two distinct vertices $u_{i+1}$ and $v_{i+1}$
from $V_{h_{i+1}}(\mathbf{T}(u_i))$ such that 
$\lambda(\mathbf{T}(u_{i+1})) = \infty$. We now have the integers
$h_1,\ldots,h_{i+1}$ and the vertices
$u_1,\ldots,u_{i+1}$ and $v_1,\ldots,v_{i+1}$. This process continues
indefinitely and we have the following.
\begin{claim}
\label{clm:inf-antichain}  
The infinite set $A = \{v_i : i\in\nats\}$ forms antichain
in $\mathbf{T}$ with it natural partial order.
\end{claim}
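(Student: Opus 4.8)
The plan is to prove that for all $i<j$ the vertices $v_i$ and $v_j$ are incomparable in the natural partial order on $\mathbf{T}$ (with the root maximal); since $i,j$ are arbitrary, this is exactly the assertion that $A$ is an antichain. I would also note in passing that the $v_i$ are pairwise distinct — they sit on the strictly increasing levels $\ell(u_1)<\ell(u_2)<\cdots$ produced by the construction — so that $A$ is genuinely infinite as claimed.

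First I would extract the nesting built into the construction. For every $i\geq 2$ the vertices $u_i$ and $v_i$ were chosen from $V_{h_i}(\mathbf{T}(u_{i-1}))$, so both are descendants of $u_{i-1}$; hence $D[u_i]\subseteq D[u_{i-1}]$, and iterating, $D[u_j]\subseteq D[u_i]$ whenever $i\leq j$. In particular, for $i<j$ we have $i\leq j-1$, so $v_j\in D(u_{j-1})\subseteq D[u_i]$; and because $v_j$ lies on a strictly deeper level than $u_i$, it is a proper descendant, i.e.\ $v_j\in D(u_i)$.

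The second ingredient is an elementary fact about a rooted tree seen as a poset: if $x\neq y$ are two vertices on the same level, then $y$ is incomparable to every vertex of $D[x]$. To see this, take $w\in D[x]$ and suppose $w$ and $y$ were comparable. If $y$ is an ancestor of $w$, then both $x$ and $y$ lie on the unique simple path from $w$ to the root (for $x$ this uses $w\in D[x]$); but such a path meets each level in exactly one vertex, forcing $x=y$, a contradiction. If instead $y$ is a (proper) descendant of $w$, then, since $w$ is a descendant of or equal to $x$, the vertex $y$ is a proper descendant of $x$, so $\ell(y)>\ell(x)$, again contradicting that $x$ and $y$ share a level. Applying this with $x=u_i$ and $y=v_i$ — legitimate since $u_i\neq v_i$ lie on the same level $h_i$ — gives that $v_i$ is incomparable to every vertex of $D[u_i]$.

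Combining the two steps, for $i<j$ we have $v_j\in D(u_i)\subseteq D[u_i]$ while $v_i$ is incomparable to all of $D[u_i]$, hence $v_i$ and $v_j$ are incomparable; since $i<j$ were arbitrary, $A$ is an antichain. The one place that needs to be written carefully is the elementary tree-poset fact in the third step — handling the two cases of comparability of $w$ and $y$ and invoking the ``one vertex per level along a root-path'' principle correctly — together with remembering that what must be shown to descend from $u_i$ is $v_j$ (not $u_j$); this is precisely why the construction was arranged so that both $u_{i+1}$ and $v_{i+1}$ are descendants of $u_i$.
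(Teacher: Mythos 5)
Your proof is correct and follows essentially the same route as the paper's: both arguments rest on the observation that $v_i$, being on the same level as $u_i$ but distinct from it, is incomparable to every vertex of $\mathbf{T}(u_i)$, and that $v_j$ for $j>i$ lies in $\mathbf{T}(u_i)$ by the nesting of the construction. You simply make explicit two steps the paper leaves implicit (the chain $D[u_j]\subseteq D[u_i]$ and the same-level incomparability lemma), which is fine.
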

\begin{proof}
Let $i<j$ be natural numbers. Since $v_i$ and $u_i$ are distinct vertices
from $V_{h_i}(\mathbf{T}(u_{i-1}))$, then $v_i$ is incomparable to
any vertex from $\mathbf{T}(u_i)$ containing $u_i$ and all its
descendants in $\mathbf{T}$, including $v_j$. Hence, $v_i$ and $v_j$ are
incomparable and so $A$ is an infinite antichain in $\mathbf{T}$.
\end{proof}
By Claim~\ref{clm:inf-antichain} 
we then have for an LFOR-i-tree with $\lambda(\mathbf{T}) = \infty$
that the width of $\mathbf{T}$ is also infinity, and so
$w(\mathbf{T}) = \lambda(\mathbf{T})$. By
Proposition~\ref{prp:cup-lambda} we then have the following analog of
Dilworth's Theorem for LFOR-i-trees.
\begin{corollary}
\label{cor:cup-w}
Let $\mathbf{T}$ be an LFOR-i-tree equipped with the natural partial order that
makes the root the sole maximum element. If $\lambda(\mathbf{T})$ is as
in (\ref{eqn:lambda}) and $w(\mathbf{T})$ its width,
then $w(\mathbf{T}) = \lambda(\mathbf{T})$ and
$\mathbf{T}$ decomposes into infinite chains
\[
\mathbf{T} = \bigcup_{i=1}^{w(\mathbf{T})} C_i,
\]
where each chain is order reverse isomorphic to $({\nats},\leq)$ and
this is a characterization of LFOR-i-trees. 
Further, this decomposition is minimal: there is no such decomposition
with fewer than $w(\mathbf{T})$ chains. 
\end{corollary}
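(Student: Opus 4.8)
The plan is to assemble the statement from the pieces already set up in the discussion preceding it, the only genuinely new content being the ``characterization'' half. First I would record the equality $w(\mathbf{T}) = \lambda(\mathbf{T})$, which is exactly what the two paragraphs before the corollary prove: if $\lambda(\mathbf{T})\in\nats$ then the decomposition of Proposition~\ref{prp:cup-lambda} into $\lambda(\mathbf{T})$ chains gives $w(\mathbf{T})\leq\lambda(\mathbf{T})$ (no antichain meets a chain in more than one vertex), while Observation~\ref{obs:w=l} applied to each finite $\mathbf{T}_h$ gives $w(\mathbf{T})\geq w(\mathbf{T}_h)=\lambda(\mathbf{T}_h)$, so letting $h\to\infty$ in (\ref{eqn:lambda}) yields $w(\mathbf{T})\geq\lambda(\mathbf{T})$; and if $\lambda(\mathbf{T})=\infty$ then Claim~\ref{clm:inf-antichain} supplies an infinite antichain, so again $w(\mathbf{T})=\infty=\lambda(\mathbf{T})$. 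Substituting $w(\mathbf{T})$ for $\lambda(\mathbf{T})$ throughout Proposition~\ref{prp:cup-lambda} then delivers the displayed decomposition $\mathbf{T}=\bigcup_{i=1}^{w(\mathbf{T})}C_i$ with each $C_i$ order-reverse isomorphic to $(\nats,\leq)$, together with its minimality, where ``fewer than $w(\mathbf{T})$'' is read as a finite number when $w(\mathbf{T})=\infty$, just as in the remark after that proposition.

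Next I would prove the direction that upgrades this to a characterization: every LFOR-tree that decomposes into finitely or countably many infinite chains, each order-reverse isomorphic to $(\nats,\leq)$, is an LFOR-i-tree. Given such a decomposition $\mathbf{T}=\bigcup_i C_i$ and an arbitrary vertex $v$, pick the chain $C_i$ containing $v$. Being order-reverse isomorphic to $(\nats,\leq)$, the chain $C_i$ has a maximum but no minimum, so it contains an element strictly below $v$ in the tree order; hence $v$ has a proper descendant and $\mathbf{T}$ has no leaves. Iterating (choose a child of $v$, then a child of that vertex, and so on; the vertices produced lie on strictly increasing levels, hence are pairwise distinct) shows that $v$ has infinitely many descendants, i.e.\ $v$ is an i-vertex. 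Since $v$ was arbitrary and $\mathbf{T}$ is locally finite, $\mathbf{T}$ is an LFOR-i-tree, which together with the forward implication already obtained gives the claimed equivalence.

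The step that needs the most care is making precise what ``this is a characterization of LFOR-i-trees'' asserts: local finiteness cannot be omitted from the hypothesis of the converse. For instance, the infinite OR-tree whose root has countably many children, each being the top vertex of its own infinite downward path, decomposes into countably many infinite chains each order-reverse isomorphic to $(\nats,\leq)$ and has every vertex an i-vertex, yet its first level is infinite, so it is not an LFOR-tree at all. Thus the equivalence must be understood within the class of LFOR-trees (equivalently: an infinite OR-tree is an LFOR-i-tree if and only if it is locally finite and admits such a chain decomposition). Apart from this caveat I anticipate no real obstacle, since the remaining points --- the equality of chain count and width in the infinite case, and the reading of ``fewer than $w(\mathbf{T})$'' when $w(\mathbf{T})=\infty$ --- are exactly those already recorded in the remarks following Proposition~\ref{prp:cup-lambda}.
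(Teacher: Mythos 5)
Your proposal is correct and, for the parts the paper actually argues, follows the same route: the two inequalities $w(\mathbf{T})\leq\lambda(\mathbf{T})$ (via the chain decomposition of Proposition~\ref{prp:cup-lambda}) and $w(\mathbf{T})\geq\lambda(\mathbf{T})$ (via Observation~\ref{obs:w=l} applied to each $\mathbf{T}_h$, with Claim~\ref{clm:inf-antichain} handling the infinite case) are exactly the content of the two paragraphs preceding the corollary, after which the decomposition and its minimality are inherited verbatim from Proposition~\ref{prp:cup-lambda}. Where you genuinely go beyond the paper is the ``characterization'' clause: the paper asserts it without proof, whereas you supply the converse argument (every vertex of a chain order-reverse isomorphic to $(\nats,\leq)$ has infinitely many elements of that chain, hence infinitely many descendants, below it, so every vertex is an i-vertex) together with the necessary caveat that local finiteness must be retained as a hypothesis, illustrated by the tree whose root has infinitely many children each heading an infinite path. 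That supplement is a real improvement in rigor, since as stated the equivalence only makes sense within the class of LFOR-trees, a point the paper leaves implicit.
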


We now consider the function $s(\mathbf{T}_h)$, the number of OR-subtrees
of $\mathbf{T}_h$, for an LFOR-i-tree $\mathbf{T}$. If
$\lambda(\mathbf{T}) = 1$ then clearly $\mathbf{T}$ is an infinite rooted
path and $s(\mathbf{T}_h) = h+1$, a polynomial in $h$ of degree one and
leading coefficient of one.
Assume that $\lambda(\mathbf{T}) \ge 2$ is a finite natural number.
In this case, there is a smallest level in $\mathbf{T}$ that has at least
two vertices, say $\ell\geq 1$. Let $u$ be the singleton on level $\ell-1$
and let $u_1,\ldots,u_k$ be the children of $u$.
By Lemma~\ref{lmm:OR-subtrees} we get for any 
$h\geq\ell-1$ that
\begin{equation}
\label{eqn:s-poly}  
s(\mathbf{T}_h) = \ell - 1 + \prod_{i=1}^k(1 + s(\mathbf{T}(u_i)_{h-\ell})).
\end{equation}
Since each of the LFOR-trees $\mathbf{T}(u_i)$ is an LFOR-i-tree we have
by (\ref{eqn:l-partition}) that 
\[
\lambda(\mathbf{T}) = \sum_{i=1}^k\lambda(\mathbf{T}(u_i))
\]
where each $\lambda(\mathbf{T}(u_i))\geq 1$. By induction on
$\lambda(\mathbf{T})$ we can assume each $s(\mathbf{T}(u_i))$ to
be a polynomial in $h-\ell$, and hence in $h$ of degree
$\lambda(\mathbf{T}(u_i))$, and leading coefficient of one.
Since we have $s(\mathbf{T}_0) = 1$ for each LFOR-i-tree
$\mathbf{T}$, we have by (\ref{eqn:s-poly}) the following observation.
\begin{observation}
\label{obs:s-poly}
If $\mathbf{T}$ is an LFOR-i-tree with a finite $\lambda(\mathbf{T})\in\nats$,
then there is an $N\in\nats$ such that for all $h\geq N$
the number $s(\mathbf{T}_h)$ of OR-subtrees of height $h$ or less
is a monic polynomial in $h$ of degree $\lambda(\mathbf{T})$.
\end{observation}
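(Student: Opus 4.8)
The plan is to argue by strong induction on $\lambda(\mathbf{T})$, using nothing beyond the recursion already recorded in~(\ref{eqn:s-poly}) and the leaf-count additivity~(\ref{eqn:l-partition}). For the base case $\lambda(\mathbf{T}) = 1$: the increasing sequence $(\lambda(\mathbf{T}_h))_{h\geq 0}$ starts at $1$ and has limit $1$, so every $V_h(\mathbf{T})$ is a singleton, $\mathbf{T}$ is the infinite rooted path, and $s(\mathbf{T}_h) = h+1$, which is monic of degree $1 = \lambda(\mathbf{T})$; here $N = 0$ works.

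For the inductive step I would assume $\lambda := \lambda(\mathbf{T}) \geq 2$ is finite and that the conclusion holds for every LFOR-i-tree whose finite leaf-count is strictly smaller. As set up just before the statement, there is a least level $\ell\geq 1$ with $|V_\ell(\mathbf{T})|\geq 2$ (such $\ell$ exists because the increasing sequence $\lambda(\mathbf{T}_h) = |V_h(\mathbf{T})|$ has limit $\geq 2$); the unique vertex $u$ on level $\ell-1$ then has children $u_1,\dots,u_k$ with $k\geq 2$, each $\mathbf{T}(u_i)$ is itself an LFOR-i-tree, and by~(\ref{eqn:l-partition}) we have $\sum_{i=1}^k\lambda(\mathbf{T}(u_i)) = \lambda$ with every $\lambda(\mathbf{T}(u_i))\geq 1$. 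Since $k\geq 2$, each $\lambda(\mathbf{T}(u_i)) \leq \lambda - 1 < \lambda$, so the induction hypothesis applies: there are $N_i\in\nats$ and monic polynomials $p_i$ of degree $\lambda(\mathbf{T}(u_i))$ with $s(\mathbf{T}(u_i)_m) = p_i(m)$ for all $m\geq N_i$.

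I would then set $N := \ell + \max_i N_i$. For $h\geq N$ the recursion~(\ref{eqn:s-poly}) gives $s(\mathbf{T}_h) = \ell - 1 + \prod_{i=1}^k\bigl(1 + p_i(h-\ell)\bigr)$. Each factor $1 + p_i(h-\ell)$ is a monic polynomial in $h$ of degree $\lambda(\mathbf{T}(u_i))\geq 1$, since adding the constant $1$ and performing the affine substitution $h\mapsto h-\ell$ alter neither monicity nor degree; hence the product is monic of degree $\sum_{i=1}^k\lambda(\mathbf{T}(u_i)) = \lambda$, and adding the constant $\ell - 1$ does not touch the leading term because the product already has degree at least $2$. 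Thus $s(\mathbf{T}_h)$ agrees with a monic polynomial in $h$ of degree $\lambda$ for all $h\geq N$, closing the induction.

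I expect the only real subtlety to be bookkeeping rather than ideas: one must use strong induction, not a one-step induction, because the $\lambda(\mathbf{T}(u_i))$ can take any values between $1$ and $\lambda-1$; one must check that the base case genuinely forces $\mathbf{T}$ to be the infinite path; and one must track the threshold honestly through the substitution $h\mapsto h-\ell$, recalling that~(\ref{eqn:s-poly}) is valid only for $h\geq \ell-1$ to begin with. The decisive structural point is that the degrees of the $p_i$ sum to exactly $\lambda(\mathbf{T})$ while each is positive, which is precisely what makes the additive constants $1$ and $\ell-1$ harmless.
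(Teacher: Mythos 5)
Your proof is correct and follows essentially the same route as the paper: induction on $\lambda(\mathbf{T})$ with the infinite path as the base case, the recursion~(\ref{eqn:s-poly}) at the first branching level $\ell$, and the additivity~(\ref{eqn:l-partition}) to see that the degrees of the monic factors sum to $\lambda(\mathbf{T})$. Your version is slightly more explicit than the paper's about the strong-induction form and the choice of threshold $N = \ell + \max_i N_i$, but the argument is the same.
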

{\sc Remark:} The number $N\in\nats$ in the above
Observation~\ref{obs:s-poly} can, in fact, be taken
to be the smallest $h$ such that
$\lambda(\mathbf{T}_h) = \lambda(\mathbf{T})$.
  
Assume now that $\lambda(\mathbf{T}) = \infty$. Let $d\in\nats$
and let $N\in\nats$ be such that $\lambda(\mathbf{T}_h) \geq d+1$
for all $h\geq N$. Restricting to all OR-subtrees of $\mathbf{T}_h$
that contain the subtree $\mathbf{T}_N$, we have for $h\geq N$
that $s(\mathbf{T}_h) \geq (h-N+1)^{d+1} \geq h^d$ for all sufficiently
large $h\geq N$. Since $d$ was arbitrary, this shows that
$s(\mathbf{T}_h)$ is larger than any polynomial when $h$ tends to infinity.
By this, together with Corollary~\ref{cor:cup-w} and
Observation~\ref{obs:s-poly}, we therefore have the following theorem.

\begin{theorem}
\label{thm:poly-iff}
Let $\mathbf{T}$ be an LFOR-i-tree and $\lambda(\mathbf{T})$ be as
in (\ref{eqn:lambda}). In this case the following are equivalent.
\begin{enumerate}
\item The width $w(\mathbf{T}) = \lambda(\mathbf{T})\in\nats$ is finite.
\item $\mathbf{T}$ can be decomposed into a finite number of infinite chains 
as in Observation~\ref{obs:s-poly} and this finite number is $w(\mathbf{T})$.
\item The function $s(\mathbf{T}_h)$ is bounded from above by a fixed
  polynomial in $h$ as $h\rightarrow\infty$.
\item The function $s(\mathbf{T}_h)$ is a monic polynomial in $h$ of
  degree $w(\mathbf{T})$ for all $h\geq N$ where $N\in\nats$ is a fixed number.
\end{enumerate}
\end{theorem}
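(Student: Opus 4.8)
The plan is to establish the four conditions as equivalent by proving the cycle $(1)\Rightarrow(4)\Rightarrow(3)\Rightarrow(1)$ together with the separate equivalence $(1)\Leftrightarrow(2)$. Essentially everything needed has already been assembled: Corollary~\ref{cor:cup-w} (which gives $w(\mathbf{T})=\lambda(\mathbf{T})$ and the decomposition of $\mathbf{T}$ into $w(\mathbf{T})$ infinite chains), Observation~\ref{obs:s-poly} together with the remark following it, and the paragraph immediately preceding the theorem that handles the case $\lambda(\mathbf{T})=\infty$. So the proof is largely a matter of citing these in the right order.

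First I would dispatch $(1)\Leftrightarrow(2)$. The direction $(1)\Rightarrow(2)$ is exactly Corollary~\ref{cor:cup-w}: if $w(\mathbf{T})=\lambda(\mathbf{T})\in\nats$, that corollary produces a decomposition of $\mathbf{T}$ into $w(\mathbf{T})$ infinite chains, each order-reverse isomorphic to $(\nats,\le)$. For $(2)\Rightarrow(1)$, suppose $\mathbf{T}=C_1\cup\cdots\cup C_m$ is a decomposition into finitely many infinite chains; since any antichain of $\mathbf{T}$ meets each $C_j$ in at most one vertex, $w(\mathbf{T})\le m<\infty$, and then $w(\mathbf{T})=\lambda(\mathbf{T})$ from Corollary~\ref{cor:cup-w} gives $(1)$, while the uniqueness noted in the remark after Proposition~\ref{prp:cup-lambda} forces $m=w(\mathbf{T})$.

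Next, $(1)\Rightarrow(4)$ is Observation~\ref{obs:s-poly}: when $\lambda(\mathbf{T})\in\nats$ there is an $N$ (the least $h$ with $\lambda(\mathbf{T}_h)=\lambda(\mathbf{T})$, per the remark after that observation) beyond which $s(\mathbf{T}_h)$ is a monic polynomial in $h$ of degree $\lambda(\mathbf{T})=w(\mathbf{T})$. The implication $(4)\Rightarrow(3)$ is immediate: a monic polynomial is a fixed polynomial upper bound for $h\ge N$, and adding a constant covers the finitely many $h<N$. Finally, $(3)\Rightarrow(1)$ I would prove by contraposition: if $(1)$ fails then, since $w(\mathbf{T})=\lambda(\mathbf{T})$ always holds by Corollary~\ref{cor:cup-w}, we must have $\lambda(\mathbf{T})=\infty$; the argument in the paragraph just before the theorem then shows that for every $d\in\nats$ one has $s(\mathbf{T}_h)\ge h^d$ for all sufficiently large $h$, so $s(\mathbf{T}_h)$ outgrows every polynomial and $(3)$ fails.

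There is no genuinely hard step here; the theorem is a repackaging of results already established. The only points that warrant a little care are: in $(2)\Rightarrow(1)$, noting that an \emph{arbitrary} finite chain decomposition — not merely the canonical one constructed in Proposition~\ref{prp:cup-lambda} — bounds the width from above, so that $w(\mathbf{T})$ is forced to be finite; and, in $(3)\Rightarrow(1)$, being explicit that the identity $w(\mathbf{T})=\lambda(\mathbf{T})$ leaves no possibility strictly between ``finite'' and ``$\infty$'', so that failure of $(1)$ really does put us in the case treated by the $\lambda(\mathbf{T})=\infty$ argument.
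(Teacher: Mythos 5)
Your proposal is correct and follows essentially the same route as the paper, which derives the theorem directly from Corollary~\ref{cor:cup-w}, Observation~\ref{obs:s-poly}, and the superpolynomial-growth argument for $\lambda(\mathbf{T})=\infty$ given in the paragraph immediately preceding the statement. You have merely made the implication cycle explicit, which the paper leaves to the reader.
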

Suppose an LFOR-i-tree $\mathbf{T}$ has a finite width $w = w(\mathbf{T})$ 
and let $N\in\nats$
be such that $\lambda(\mathbf{T}_h) = \lambda(\mathbf{T}) = w$ for all
$h\geq N$. Let $M = |V(\mathbf{T}_N)|$ be the number of vertices of
$\mathbf{T}_N$. In this case, $\mathbf{T}_h$ has exactly $n_h = M + w(h-N)$
vertices for each $h\geq N$. As the height $h$ can be written in terms of
the number $n_h$ of vertices of $\mathbf{T}_h$ we have
by the above Theorem~\ref{thm:poly-iff} the following.
\begin{corollary}
\label{cor:poly-i-vert}
An LFOR-i-tree $\mathbf{T}$ has a finite width as a poset if and only if
the number of OR-subtrees $s(\mathbf{T}_h)$ is a polynomial in terms of
$n_h = |V(\mathbf{T}_h)|$, the number of vertices of $\mathbf{T}_h$
for large enough $h$, in which case $s(\mathbf{T}_h)$ is a polynomial
of degree $w = w(\mathbf{T})$ with leading coefficient of $1/w^w$.
\end{corollary}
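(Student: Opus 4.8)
The plan is to obtain the ``finite width $\Rightarrow$ polynomial'' direction directly from Theorem~\ref{thm:poly-iff}, and to establish the converse by a growth estimate. Recall from Corollary~\ref{cor:cup-w} that for an LFOR-i-tree $w(\mathbf{T})=\lambda(\mathbf{T})=\lim_h\lambda(\mathbf{T}_h)$, and from Observation~\ref{obs:i-tree} that $\lambda(\mathbf{T}_h)=|V_h(\mathbf{T})|$, so $n_h=|V(\mathbf{T}_h)|=\sum_{i=0}^h|V_i(\mathbf{T})|$, with $(|V_i(\mathbf{T})|)_i$ nondecreasing since every vertex of an LFOR-i-tree has a child.

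\emph{Finite width implies polynomial in $n_h$.} Put $w:=w(\mathbf{T})\in\nats$ and choose $N$ large enough that $\lambda(\mathbf{T}_h)=w$ for all $h\ge N$ and, by Theorem~\ref{thm:poly-iff}, $s(\mathbf{T}_h)=Q(h)$ for all $h\ge N$ with $Q$ a monic polynomial of degree $w$. Setting $M:=|V(\mathbf{T}_N)|$, each of the levels $N+1,\dots,h$ adds exactly $w$ vertices, so $n_h=M+w(h-N)$ for $h\ge N$; thus $h=n_h/w+(N-M/w)$ is an affine function of $n_h$, and $P(x):=Q\bigl(x/w+N-M/w\bigr)$ is a polynomial of degree $w$ with leading coefficient $(1/w)^w=1/w^w$ satisfying $s(\mathbf{T}_h)=P(n_h)$ for all $h\ge N$. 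Since $n_h$ takes infinitely many distinct values, $P$ is the unique such polynomial; this also yields the ``in which case'' clause.

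\emph{Polynomial in $n_h$ implies finite width.} I prove the contrapositive. Assume $w(\mathbf{T})=\infty$, so $\lambda(\mathbf{T}_h)\to\infty$, and suppose for contradiction that $s(\mathbf{T}_h)=P(n_h)$ for all large $h$ with $\deg P=e$ (necessarily $e\ge1$, since $n_h\to\infty$); then $s(\mathbf{T}_h)\le Cn_h^{\,e}$ for all large $h$ and some constant $C$. I use three elementary bounds: (a) among the OR-subtrees of $\mathbf{T}_h$ that contain all of $\mathbf{T}_{h-1}$, each of the $\lambda(\mathbf{T}_h)$ level-$h$ leaves may be included or not independently, so $s(\mathbf{T}_h)\ge 2^{\lambda(\mathbf{T}_h)}$; (b) for $0\le j<h$, the OR-subtrees of $\mathbf{T}_h$ containing $\mathbf{T}_j$ are counted by $\prod_{v\in V_j(\mathbf{T})}s(\mathbf{T}(v)_{h-j})$, and since each $\mathbf{T}(v)_{h-j}$ has height $h-j$ it has at least $h-j+1$ OR-subtrees (the initial segments of a rooted path), so $s(\mathbf{T}_h)\ge(h-j+1)^{\lambda(\mathbf{T}_j)}$; (c) by the monotonicity above, $n_h\le(h+1)\lambda(\mathbf{T}_h)$. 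If $2^{\lambda(\mathbf{T}_h)}\ge n_h^{\,e+1}$ for infinitely many $h$, then (a) gives $s(\mathbf{T}_h)\ge n_h^{\,e+1}>Cn_h^{\,e}$ once $n_h>C$, a contradiction. Otherwise $\lambda(\mathbf{T}_h)<(e+1)\log_2 n_h$ for all large $h$, so by (c) $n_h<(h+1)(e+1)\log_2 n_h$, which (since $x\mapsto x/\log_2 x$ is eventually increasing) forces $n_h\le h^2$ for all large $h$; picking $N_1$ with $\lambda(\mathbf{T}_{N_1})\ge 2e+1$, bound (b) with $j=N_1$ gives $s(\mathbf{T}_h)\ge(h-N_1+1)^{2e+1}\ge(h/2)^{2e+1}$ for $h\ge 2N_1$, while $Cn_h^{\,e}\le Ch^{2e}$, forcing $h\le C\,2^{2e+1}$ --- again a contradiction for large $h$. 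Hence $s(\mathbf{T}_h)$ agrees with no fixed polynomial in $n_h$.

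\emph{Main obstacle.} The delicate point is this converse: an LFOR-i-tree of infinite width may have $n_h$ growing far faster than linearly in $h$, so ``polynomial in $n_h$'' does not reduce to ``polynomial in $h$'' and Theorem~\ref{thm:poly-iff} cannot be quoted directly. The dichotomy above is what closes this gap --- rapid growth of the level sizes $\lambda(\mathbf{T}_h)$ is detected by $s(\mathbf{T}_h)\ge 2^{\lambda(\mathbf{T}_h)}$, whereas if they grow slowly then $n_h$ stays polynomially bounded in $h$ and the chain--product bound $s(\mathbf{T}_h)\ge(h-j+1)^{\lambda(\mathbf{T}_j)}$ forces super-polynomial growth.
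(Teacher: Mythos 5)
Your proof is correct. The forward direction is exactly the paper's argument: with $w=w(\mathbf{T})$ finite one has $n_h=M+w(h-N)$ for $h\geq N$, so $h$ is affine in $n_h$ with slope $1/w$, and substituting into the monic degree-$w$ polynomial from Theorem~\ref{thm:poly-iff} gives degree $w$ and leading coefficient $1/w^w$. Where you genuinely diverge is the converse. The paper derives the corollary solely by remarking that ``$h$ can be written in terms of $n_h$'' and citing Theorem~\ref{thm:poly-iff}; that substitution is only available when the width is finite, and since $n_h$ can grow superpolynomially in $h$ when $w(\mathbf{T})=\infty$, ``not polynomial in $h$'' does not formally yield ``not polynomial in $n_h$.'' Your dichotomy closes exactly this gap, and both of its horns check out: the bound $s(\mathbf{T}_h)\geq 2^{\lambda(\mathbf{T}_h)}$ (free choice of level-$h$ vertices above a full copy of $\mathbf{T}_{h-1}$) handles fast level growth, while $n_h\leq(h+1)\lambda(\mathbf{T}_h)$ together with the chain-product bound $s(\mathbf{T}_h)\geq(h-j+1)^{\lambda(\mathbf{T}_j)}$ (which is just Observation~\ref{obs:easy-s-bounds}) handles slow level growth. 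So your write-up is not merely a restatement of the paper's proof but a completion of it; the cost is an extra paragraph of elementary estimates, and the gain is that the ``if'' half of the equivalence is actually proved rather than asserted.
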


\subsection{Simple and natural bounds}
Suppose now an LFOR-i-tree $\mathbf{T}$ has an infinite width
$w(\mathbf{T}) = \lambda(\mathbf{T}) = \infty$. We first derive
some clear-cut upper and lower bounds for the number $s(\mathbf{T}_h)$
of OR-subtrees of $\mathbf{T}_h$ in terms of the height $h$
and then investigate when these bounds are asymptotically tight.

For each $h\in\nats$ let $\lambda_h = \lambda(\mathbf{T}_h)$.
In general, we clearly have $s(\mathbf{T}_h)\leq (h+1)^{\lambda_h}$ since
$(\lambda_h)_{h\geq 0}$ is an increasing sequence. Let $k\in\nats$ be fixed.
For each $h\geq k$ consider the collection of OR-subtrees of 
 $\mathbf{T}_h$ that (i) contain $\mathbf{T}_k$ as a subtree and
(ii) have exactly $\lambda_k$ leaves at their lowest level $h$.
This means that each OR-subtree in this collection is formed from
$\mathbf{T}_k$ by adding a path with at most $h-k$ vertices to each
of the $\lambda_k$ leaves of $\mathbf{T}_k$. Consequently, the
number of OR subtrees in this collection is exactly $(h-k+1)^{\lambda_k}$
and so $s(\mathbf{T}_h) \ge (h-k+1)^{\lambda_k}$ is We therefore have
the following simply expressible upper and lower bound. 
\begin{observation}
\label{obs:easy-s-bounds}
If $k\in\nats$ then for every $h\geq k$ we have
$(h-k+1)^{\lambda_k} \leq s(\mathbf{T}_h)\leq (h+1)^{\lambda_h}$.
\end{observation}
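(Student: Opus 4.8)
The plan is to establish the two inequalities separately by direct combinatorial encodings, both exploiting that $\mathbf{T}$ is an LFOR-i-tree, so that by Observation~\ref{obs:i-tree} the leaves of each finite tree $\mathbf{T}_h$ are precisely the vertices of $\mathbf{T}$ on level $h$, of which there are $\lambda_h=\lambda(\mathbf{T}_h)=|V_h(\mathbf{T})|$.

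For the upper bound $s(\mathbf{T}_h)\le (h+1)^{\lambda_h}$ I would argue as follows. For each $v\in V_h(\mathbf{T})$ let $P_v$ be the unique path in $\mathbf{T}_h$ from the root $r$ down to $v$; it has $h+1$ vertices, one on each level $0,1,\dots,h$. First observe that for any OR-subtree $T'$ of $\mathbf{T}_h$, since $T'$ is connected and contains $r$, the set $V(T')\cap P_v$ is closed under passing to ancestors within $P_v$, hence is a top segment of $P_v$; it is therefore one of only $h+1$ possible sets. Next, because $\mathbf{T}$ is an LFOR-i-tree every vertex of $\mathbf{T}_h$ has a descendant on level $h$ and so lies on some $P_v$; thus $V(\mathbf{T}_h)=\bigcup_{v\in V_h(\mathbf{T})}P_v$ and consequently $V(T')=\bigcup_{v\in V_h(\mathbf{T})}\bigl(V(T')\cap P_v\bigr)$. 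Hence $T'$ is recovered from the tuple $\bigl(V(T')\cap P_v\bigr)_{v\in V_h(\mathbf{T})}$, so $T'\mapsto\bigl(V(T')\cap P_v\bigr)_{v}$ is injective into a set of size at most $(h+1)^{\lambda_h}$.

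For the lower bound $s(\mathbf{T}_h)\ge (h-k+1)^{\lambda_k}$ with $k\le h$ I would fix, for each leaf $u$ of $\mathbf{T}_k$ (equivalently each $u\in V_k(\mathbf{T})$, of which there are $\lambda_k$), one downward path $R_u$ in $\mathbf{T}$ from $u$ to a vertex of level $h$; such a path exists since $u$ is an i-vertex, and it has $h-k+1$ vertices, on levels $k,k+1,\dots,h$. For each function assigning to every such $u$ a top segment of $R_u$ (there are $h-k+1$ of them, from $\{u\}$ up to all of $R_u$), the union of $\mathbf{T}_k$ with the chosen segments is a connected subgraph of $\mathbf{T}_h$ containing $r$, hence an OR-subtree of $\mathbf{T}_h$; distinct functions give distinct subtrees, since intersecting such a subtree with $R_u$ recovers the segment chosen at $u$. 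This produces $(h-k+1)^{\lambda_k}$ pairwise distinct OR-subtrees of $\mathbf{T}_h$.

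Neither half presents a genuine obstacle; the only points needing care are the two injectivity claims and the verification that the sets built in the lower bound really are OR-subtrees, all of which follow at once from the LFOR-i-tree structure (no leaves, hence every vertex of $\mathbf{T}_h$ extends to a level-$h$ vertex). The one subtlety worth flagging is that the upper-bound encoding is lossy — most tuples of top segments do not assemble into a subtree — so it legitimately yields only an upper bound, exactly as stated.
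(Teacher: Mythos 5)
Your proposal is correct and matches the paper's argument in substance: the paper likewise obtains the lower bound by counting the subtrees formed from $\mathbf{T}_k$ by appending a truncated downward path (one of $h-k+1$ possible lengths) to each of its $\lambda_k$ leaves, and treats the upper bound as the analogous encoding of an arbitrary OR-subtree by how far it descends along each of the $\lambda_h$ root-to-level-$h$ paths. You merely spell out the injectivity and disjointness details that the paper leaves implicit, which is fine.
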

The bounds in the above Observation~\ref{obs:easy-s-bounds} are in a 
sense natural in that (i) they are obtained from considering disjoint 
paths emanating downward in an ``independent" way and hence yield the simplest
enumeration of subtrees, and (ii) they provide tight polynomial bounds
when $\lambda(\mathbf{T})$ is finite (see Example~\ref{exa:poly} here below.)

These bounds in Observation~\ref{obs:easy-s-bounds} may or may not be asymptotically tight.
This depends on the growth of the integer function $h\mapsto\lambda_h$.
However, their simplicity will be useful in expressing bounds that are
asymptotically tight for infinitely many values of $h$.
We note that Observation~\ref{obs:easy-s-bounds} will not always yield concrete
$\Theta$-bounds of $s(\mathbf{T}_h)$. However, 
Observation~\ref{obs:easy-s-bounds} can yield, as we will see,
an asymptotically tight upper
bound in the sense that the lower bound matches asymptotically
the upper bound for infinitely many values of $h$ as $h\rightarrow\infty$. 

\begin{example}
\label{exa:poly}
Let $\mathbf{T}$ be an LFOR-tree with $\lambda(\mathbf{T}) = \lambda\in\nats$ a constant
as in Theorem~\ref{thm:poly-iff}. Let $N\in\nats$ be such that $\lambda(\mathbf{T}_h) = \lambda$
for every $h\geq N$ and let $k\geq N$ be fixed. In this case the upper and lower 
bounds in Observation~\ref{obs:easy-s-bounds} for $k = N$ and $h\geq k$ are given by 
monic polynomials in $h$ of degree $\lambda$ as 
$(h-N+1)^{\lambda} \leq s(\mathbf{T}_h)\leq (h+1)^{\lambda}$ and therefore
\[
\lim_{h \rightarrow \infty} s(\mathbf{T}_{h})/h^{\lambda} = 1.
\]
Hence, the upper and lower bound agree asymptotically, that is $s(\mathbf{T}_h)\sim h^{\lambda}$
when $h$ tends to infinity.
\end{example}

\begin{example}
\label{exa:k-ary-not}
Consider the LFOR-i-tree $\mathbf{T}_h = \tau_k(h)$; the perfect
$k$-ary tree of height $h$. In this case we have by
Corollary~\ref{cor:t-closed-form} that
\[
s(\mathbf{T}_h) = s(\tau_k(h))
= t_k(h) = \left\lfloor c(k)^{k^h}\right\rfloor - 1
\approx c(k)^{k^h} \neq h^{k^h} =  h^{\lambda_h},
\]
for each $k\in\nats$.
Hence, $s(\mathbf{T}_h) \neq h^{\lambda_h}$ and is far from holding in this
case when $f(h) = k^h$ is an exponential function.
\end{example}

\begin{example}
\label{exa:lg-growth}
Suppose that for an LFOR-i-tree $\mathbf{T}$ we have
$\lambda(\mathbf{T}_h) = \lambda_h = \lfloor(\lg(h+2))\rfloor$ for $h\geq 0$.
For $k = 2^i - 2$ and each $h \in\{ 2^i - 2,\ldots,2^{i+1} - 3\}$ 
we have that $\lambda_k = \lambda_h = i$ and so by 
Observation~\ref{obs:easy-s-bounds} we get 
\[
\left(h - 2^i + 3\right)^i \leq s(\mathbf{T}_h) \leq (h+1)^i.
\]
Letting $h = 2^{i+1} - 3$ in the above display we have for each $i\in\nats$ that
\[
\left(2^i\right)^i \leq s(\mathbf{T}_h) \leq \left(2^{i+1} - 2\right)^i.
\]
Hence, we see that the ratio between the upper bound and the lower bound 
is $\sim 2^i$ as $i$ tends to infinity; so we cannot conclude any tight asymptotic 
bounds for $s(\mathbf{T}_h)$ from Observation~\ref{obs:easy-s-bounds}.
\end{example}

\begin{example}
\label{exa:lgeps-growth}
Similarly to the previous example, suppose that for an LFOR-i-tree $\mathbf{T}$ we have
$\lambda(\mathbf{T}_h) = \lambda_h = \lfloor(\lg(h+2))^\epsilon\rfloor$ where $0 < \epsilon < 1$.
In this case we have for $k = 2^{i^{1/\epsilon}} - 2$ and
each $h \in\{ 2^{i^{1/\epsilon}}-2,\ldots,2^{(i+1)^{1/\epsilon}} - 3\}$  we have
that $\lambda_k = \lambda_h = i$ and so by Observation~\ref{obs:easy-s-bounds} we get
\begin{equation}
\label{eqn:i-tight-e}    
\left(h - 2^{i^{1/\epsilon}} + 3\right)^i
\leq s(\mathbf{T}_h) \leq (h+1)^i.
\end{equation}
In particular, for $h = h_i := 2^{(i+1)^{1/\epsilon}} - 3$ we have for each $i$ that
\[
\left(2^{(i+1)^{1/\epsilon}} - 2^{i^{1/\epsilon}}\right)^i
\leq s(\mathbf{T}_{h_i}) \leq \left(2^{(i+1)^{1/\epsilon}} - 2\right)^i.
\]
Dividing through by $d_i := 2^{(i+1)^{1/\epsilon}}$ and noting that 
$(1+i)^{1/\epsilon} - i^{1/\epsilon} \sim i^{1/\epsilon -1}/\epsilon$ when $i$ tends to infinity,
we obtain, since $1/\epsilon - 1 > 0$, that 
\[
\lim_{i\rightarrow\infty}\frac{s(\mathbf{T}_{h_i})}{d_i}=1^{-}.
\]
This shows that for every $i\in\nats$ the upper bound in (\ref{eqn:i-tight-e}) is 
asymptotically tight for infinitely many values of $h$, namely 
$h = h_i = 2^{(i+1)^{1/\epsilon}} - 3$ for each $i$. Consequentially, we have 
for these infinitely many values of $h$ that $s(\mathbf{T}_h) \sim h^{\lambda_h}$,
just as in Example~\ref{exa:poly}.
\end{example}

The above example can be expanded on. Firstly, by mimicking the above
considerations and analyses of Example~\ref{exa:lgeps-growth} we can
obtain the same result from Observation~\ref{obs:easy-s-bounds} for
any increasing integer function $h\mapsto \lambda_h$ whose derivative 
grows slower than the derivative of $(\lg(h))^\epsilon$ where $0<\epsilon<1$.
Secondly, we can utilize the clear 
bounds from  Observation~\ref{obs:easy-s-bounds} to the fullest to
obtain the following for a general ``slow growing'' integer
function $h\mapsto \lambda_h$. 
\begin{convention}
\label{cvn:mock-inverse}
For a strictly increasing function $f : {\nats} \rightarrow {\nats}_0$ we let
its {\em mock-inverse} $f^{*} : {\nats}_0\rightarrow {\nats}$ denote the function 
defined in the following way: 

Let $f^{*}(h) = 1$ if $h \in \{0,\ldots,f(1)-1\}$\footnote{only needed if $f(1) > 0$} and for each $i\in{\nats}$ let 
\[
f^{*}(h) =  i \ \ \mbox{ whenever } h\in \{f(i),\ldots,f(i+1) - 1\}. 
\]
\end{convention}
{\sc Remark:} Note that 
(a) for each $i\in{\nats}$ we have $f^{*}(f(i)) = i$ and 
(b) for each $h\in{\nats}_0$ we have $f(f^*(h)) = h$ if and only if 
$h = f(i)$ for some $i\in\nats$. Hence the word ``mock" for mock-inverse. 
\begin{proposition}
\label{prp:power-h}  
Let $\mathbf{T}$ be an LFOR-i-tree with an infinite width
$w(\mathbf{T}) = \lambda(\mathbf{T}) = \infty$. 
Suppose $\lambda(\mathbf{T}_h)= \lambda_h = f^{*}(h)$
where $f : {\nats}\rightarrow {\nats}_0$ a strictly increasing function 
such that $f(i)/f(i+1) = o(1/i)$ when $i$ tends to infinity.
In this case the upper bound from Observation~\ref{obs:easy-s-bounds} is asymptotically
tight, meaning that $s(\mathbf{T}_h) \sim h^{\lambda_h}$ for infinitely
many $h$. 
\end{proposition}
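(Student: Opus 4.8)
The plan is to follow the strategy of Example~\ref{exa:lgeps-growth}, exploiting that $\lambda_h = f^{*}(h)$ is a step function which is constant, equal to $i$, on each block $\{f(i),\dots,f(i+1)-1\}$. For each sufficiently large $i\in\nats$ I would set $h_i := f(i+1)-1$ and $k_i := f(i)$, and apply Observation~\ref{obs:easy-s-bounds} with $k = k_i$ and $h = h_i$. By Remark~(a) following Convention~\ref{cvn:mock-inverse}, $\lambda_{k_i} = f^{*}(f(i)) = i$, and since $h_i$ lies in the block $\{f(i),\dots,f(i+1)-1\}$ we also have $\lambda_{h_i} = i$. Hence Observation~\ref{obs:easy-s-bounds} becomes
\[
\bigl(f(i+1)-f(i)\bigr)^{i} \;\le\; s(\mathbf{T}_{h_i}) \;\le\; (h_i+1)^{i} = f(i+1)^{i}.
\]
As $f$ is strictly increasing, the $h_i$ are pairwise distinct, so it will suffice to prove $s(\mathbf{T}_{h_i})/h_i^{\,\lambda_{h_i}}\to 1$.

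Dividing the display by $f(i+1)^{i}$ gives
\[
\Bigl(1-\tfrac{f(i)}{f(i+1)}\Bigr)^{i} \;\le\; \frac{s(\mathbf{T}_{h_i})}{f(i+1)^{i}} \;\le\; 1 .
\]
Writing $x_i := f(i)/f(i+1)\in(0,1)$, the hypothesis $f(i)/f(i+1) = o(1/i)$ says precisely that $i x_i\to 0$; by Bernoulli's inequality $(1-x_i)^{i}\ge 1-i x_i$, so the left-hand side tends to $1$ and the squeeze yields $s(\mathbf{T}_{h_i}) \sim f(i+1)^{i} = (h_i+1)^{\lambda_{h_i}}$. Finally I would replace $(h_i+1)^{\lambda_{h_i}}$ by $h_i^{\lambda_{h_i}}$: this needs $(1+1/h_i)^{\lambda_{h_i}}\to 1$, i.e.\ $\lambda_{h_i}/h_i = i/(f(i+1)-1)\to 0$. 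Since $f$ is strictly increasing and integer valued, $f(i)\to\infty$, so $f(i)\ge 1$ eventually, whence $1/f(i+1)\le f(i)/f(i+1) = o(1/i)$ and therefore $i/(f(i+1)-1)\to 0$. Thus $(h_i+1)^{\lambda_{h_i}}\sim h_i^{\lambda_{h_i}}$, giving $s(\mathbf{T}_{h_i})\sim h_i^{\lambda_{h_i}}$, which is exactly the asserted asymptotic tightness of the upper bound of Observation~\ref{obs:easy-s-bounds} for infinitely many $h$.

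The argument is largely bookkeeping. The one step that genuinely uses the precise hypothesis $f(i)/f(i+1)=o(1/i)$ — rather than just $f(i)=o(f(i+1))$ — is the claim $(1-x_i)^{i}\to 1$; the only other mildly delicate points are verifying $\lambda_{h_i}=\lambda_{k_i}=i$ and absorbing the ``$+1$'' terms via $\lambda_{h_i}=o(h_i)$. I do not anticipate a substantive obstacle beyond pinning down these index identities correctly.
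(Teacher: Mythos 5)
Your proposal is correct and follows essentially the same route as the paper's proof: the same evaluation points $h_i = f(i+1)-1$, the same application of Observation~\ref{obs:easy-s-bounds} on the block where $f^{*}$ equals $i$, and the same squeeze between $(f(i+1)-f(i))^{i}$ and $f(i+1)^{i}$. The only difference is that you justify $(1-f(i)/f(i+1))^{i}\to 1$ via Bernoulli's inequality rather than the paper's rewriting through $(1-1/k)^{k}\to e^{-1}$, and you spell out the $\lambda_{h_i}/h_i\to 0$ step that the paper merely asserts; both are harmless, slightly cleaner variants of the same argument.
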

\begin{proof}
Let $i\in{\nats}$. By definition of the mock-inverse $f^*$ of $f$ we have
for each $h\in \{f(i),\ldots,f(i+1)-1\}$ that $f^*(h) = i$ and hence by 
Observation~\ref{obs:easy-s-bounds} that for $h = f(i+1)-1$ we have
\[
(f(i+1)-f(i))^i \leq s(\mathbf{T}_{f(i+1)-1})\leq f(i+1)^i.
\]
We see that the ratio of the lower bound to 
the upper bound is given by
\begin{equation}
\label{eqn:fi}
\left(\frac{f(i+1) - f(i)}{f(i+1)}\right)^i = 
\left(1 - \frac{f(i)}{f(i+1)}\right)^i = 
\left(\left(1 - \frac{f(i)}{f(i+1)}\right)^{\frac{f(i+1)}{f(i)}}\right)^{\frac{if(i)}{f(i+1)}}.
\end{equation}
By assumption of $f$ we have $if(i)/f(i+1)$ tends to zero as $i$ does. In particular
both $f(i+1)/f(i)$ and hence $f(i)$ tend to infinity as $i$ does.
Noting that $(1 - 1/k)^k\rightarrow e^{-1}$ when $k$ tends
to infinity, the limit from (\ref{eqn:fi}), when $i$ tends to infinity, is $1$. This proves that $s(\mathbf{T}_h) \sim (h+1)^{f^*(h)}$ for all $h = f(i+1) - 1$ where $i\in\nats$. Finally, for $h = f(i+1)-1$ we have $(1 + 1/h)^i$ tends to $1$ as $i$ 
tends to infinity. This complete the proof.
\end{proof}
In light of the condition on the function $f$ in Proposition~\ref{prp:power-h}, we 
will make the following convention.
\begin{convention}
\label{cvn:well-behaved}
We will call a function $f : \nats\rightarrow\nats$ {\em well-behaved} if the ratio
$if(i)/f(i+1)$ tends to a limit  $\alpha \in \{0\}\cup {\reals}_+\cup\{\infty\}$
when $i$ tends to infinity. Further, we call $f$ a {\em threshold function} if 
$\alpha\in {\reals}_+$.
\end{convention}
Restricting to well-behaved functions eliminates only functions with strange zigzag shapes and
unusual growth properties. All smooth analytic functions with monotone derivatives yield
well-behaved functions when restricting to the integers. 
When restricting to well-behaved functions we note that Observation~\ref{obs:easy-s-bounds} 
yields tight bounds {\em if and only if} $\alpha = 0$. In other words, we obtain tight asymptotic bounds from Observation~\ref{obs:easy-s-bounds} if and only
if $\frac{f(i)}{(i-1)f(i-1)} = p(i-1)$ for some function $p : {\nats}\rightarrow {\nats}_0$
with $p(i)\rightarrow\infty$. Letting $p!(n) := p(n)p(n-1)\cdots p(1)$ we then have
\[
\frac{f(n)}{(n-1)!f(1)} 
= \prod_{i = 2}^{n}\frac{f(i)}{(i-1)f(i-1)} 
= p!(n-1)
\]
and since $f(1) > 0$ we can state the following.
\begin{observation}
\label{obs:f-iff}
Among well behaved functions $f$ we have that Observation~\ref{obs:easy-s-bounds} yields a 
tight asymptotic bound, so $s(\mathbf{T}_h) \sim h^{\lambda_h}$ for infinitely many $h$,
where $\lambda_h = f^{*}(h)$, if and only if 
$f(n) = (n-1)!p!(n-1)$ for some function $p:\nats\rightarrow {\nats}_0$ that tends to infinity 
when $n$ tends to infinity.
\end{observation}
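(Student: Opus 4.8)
The plan is to route both directions of the claimed equivalence through the single quantity $\alpha:=\lim_{i\to\infty} i\,f(i)/f(i+1)$ associated to a well-behaved $f$ (Convention~\ref{cvn:well-behaved}), proving the chain
\[
\bigl[\text{Observation~\ref{obs:easy-s-bounds} is asymptotically tight for infinitely many }h\bigr]
\iff \alpha=0 \iff f(n)=(n-1)!\,p!(n-1)\text{ with }p(i)\to\infty .
\]
By ``asymptotically tight for infinitely many $h$'' I mean that along some increasing sequence of heights the ratio of the best lower bound in Observation~\ref{obs:easy-s-bounds} to its upper bound tends to $1$. The implication ``$\alpha=0$ $\Rightarrow$ tight'' is precisely Proposition~\ref{prp:power-h}: its hypothesis $f(i)/f(i+1)=o(1/i)$ is exactly $\alpha=0$, its witnessing heights are $h=f(i+1)-1$, and it also yields the stated consequence $s(\mathbf{T}_h)\sim h^{\lambda_h}$ along those heights. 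So the content to be added consists of the other two implications.

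For ``tight $\Rightarrow\alpha=0$'' I would compare the two bounds of Observation~\ref{obs:easy-s-bounds} block by block. For $h$ in the $i$-th block $\{f(i),\ldots,f(i+1)-1\}$ we have $\lambda_h=f^{*}(h)=i$; since $\lambda_k$ is nondecreasing, the exponent $\lambda_k=i$ in the lower bound $(h-k+1)^{\lambda_k}$ is attainable only for $k\ge f(i)$, and on that range the bound decreases in $k$, so the best lower bound Observation~\ref{obs:easy-s-bounds} provides at height $h$ is $(h-f(i)+1)^{i}$ (smaller exponents give strictly weaker bounds once $h$ is large). Against the upper bound $(h+1)^{i}$ this is the ratio $\bigl(1-f(i)/(h+1)\bigr)^{i}$, which is increasing in $h$ across the block and hence largest at the block top $h=f(i+1)-1$, where it equals
\[
\left(1-\frac{f(i)}{f(i+1)}\right)^{i}\longrightarrow e^{-\alpha}\qquad(i\to\infty),
\]
with $e^{-\infty}:=0$: when $f(i)/f(i+1)\to 0$ this uses $\log(1-x)=-x+O(x^{2})$ together with $i\,f(i)/f(i+1)\to\alpha$, and otherwise $\alpha=\infty$ and one bounds $(1-x)^{i}<e^{-ix}$. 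Thus the supremum over all $h$ of the best Observation~\ref{obs:easy-s-bounds} ratio is $e^{-\alpha}$, so if this ratio tends to $1$ along some increasing sequence of heights then $\alpha=0$.

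For ``$\alpha=0\iff f(n)=(n-1)!\,p!(n-1)$'' I would use the telescoping identity displayed just before the statement. Putting $p(i):=f(i+1)/\bigl(i\,f(i)\bigr)$ gives $\prod_{i=1}^{n-1}p(i)=f(n)/\bigl((n-1)!\,f(1)\bigr)$, so --- taking $f(1)=1$, which leaves $f^{*}$ and hence $\lambda_h$ unchanged --- we obtain $f(n)=(n-1)!\,p!(n-1)$, and $p(i)\to\infty$ holds exactly when $i\,f(i)/f(i+1)=1/p(i)\to 0$, i.e.\ $\alpha=0$. Conversely, from $f(n)=(n-1)!\,p!(n-1)$ one reads $f(i+1)/f(i)=i\,p(i)$, so $i\,f(i)/f(i+1)=1/p(i)\to 0$, i.e.\ $\alpha=0$. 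Chaining the three implications completes the proof.

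The step I expect to be the main obstacle is the quantifier ``for infinitely many $h$'': one must exclude the possibility that some atypical subsequence of heights forces the Observation~\ref{obs:easy-s-bounds} ratio to $1$ while $\alpha>0$, and this is exactly what the block-by-block reduction achieves by identifying the block tops $h=f(i+1)-1$ as the extremal heights and pinning the supremum of the ratio to $e^{-\alpha}$. A secondary point I would flag is integrality: for a generic well-behaved $f$ with $\alpha=0$ the numbers $p(i)=f(i+1)/\bigl(i\,f(i)\bigr)$ need not be integers (e.g.\ $f(i)=(i!)^{2}$), so the product form is best read with $p$ valued in ${\reals}_+$, or else the statement understood as restricted to those $f$ for which these ratios are integral.
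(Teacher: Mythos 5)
Your argument follows the paper's own route: the ``proof'' in the paper is just the discussion immediately preceding the observation, which asserts that tightness of Observation~\ref{obs:easy-s-bounds} is equivalent to $\alpha=0$ (via the ratio computation (\ref{eqn:fi}) in the proof of Proposition~\ref{prp:power-h}, evaluated at the block tops $h=f(i+1)-1$) and then telescopes $f(n)/\bigl((n-1)!\,f(1)\bigr)=p!(n-1)$ with $p(i)=f(i+1)/(i\,f(i))$. Your block-by-block justification of the converse direction (ruling out tightness along atypical subsequences when $\alpha>0$) and your caveat that the ratios $p(i)$ need not lie in ${\nats}_0$ for a generic well-behaved $f$ with $\alpha=0$ are both correct refinements of details the paper leaves implicit.
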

On the other hand, when considering well-behaved functions $f$ such that
$\alpha = \infty$ we see from (\ref{eqn:fi}) in 
the above proof that the ratio of the lower bound to 
the upper bound from Observation~\ref{obs:easy-s-bounds} tends to zero and hence we cannot 
obtain any asymptotic bounds for $s(\mathbf{T}_h)$ for large $h$ from 
Observation~\ref{obs:easy-s-bounds}.

Consider now the threshold functions $f$ with $\alpha\in {\reals}_+$.
In this case we see from (\ref{eqn:fi}) in 
the above proof that the ratio of the lower bound to 
the upper bound from Observation~\ref{obs:easy-s-bounds} tends to $e^{-\alpha}\in {\reals}_+$.
Although we don't obtain explicit tight asymptotic bounds for $s(\mathbf{T}_h)$ we do 
obtain tight asymptotic bounds up to a fixed constant (namely $e^{-\alpha}$) for 
$s(\mathbf{T}_h)$ for infinitely many $h$. 
\begin{observation}
\label{obs:alpha}
Among threshold functions $f$ with 
$\frac{f(i)}{(i-1)f(i-1)}\rightarrow {\alpha}^{-1}:= \beta \in {\reals}_+$
we have that $s(\mathbf{T}_h) = \Theta(h^{\lambda_h})$ for all $h = f(i+1) - 1$ where $i\in\nats$
and $\lambda_h = f^*(h)$. 
\end{observation}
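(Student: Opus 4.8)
The plan is to read off both sides of Observation~\ref{obs:easy-s-bounds} at the particular heights $h = f(i+1)-1$ and to check that the ratio of the two bounds stays bounded away from $0$ and $\infty$ uniformly in $i$. I would first record that the hypothesis $f(i)/((i-1)f(i-1))\to\beta$ is, after reindexing, the statement $if(i)/f(i+1)\to\beta^{-1}=:\alpha\in{\reals}_+$, which is exactly the quantity appearing in equation~(\ref{eqn:fi}). Now fix $i\in\nats$ and put $h = f(i+1)-1$; by Convention~\ref{cvn:mock-inverse} (and the remark following it) we have $\lambda_h = f^{*}(h) = i$ and $\lambda_{f(i)} = f^{*}(f(i)) = i$. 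Since $f$ is strictly increasing, $f(i)\leq h$, so Observation~\ref{obs:easy-s-bounds} with $k = f(i)$ gives
\[
\bigl(f(i+1)-f(i)\bigr)^{i}\;\leq\; s(\mathbf{T}_h)\;\leq\; f(i+1)^{i} \;=\;(h+1)^{\lambda_h}.
\]

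For the upper bound, dividing by $h^{\lambda_h} = (f(i+1)-1)^{i}$ leaves the factor $\bigl(1+\tfrac1h\bigr)^{i} = \bigl(1+\tfrac1{f(i+1)-1}\bigr)^{i}$; as noted at the end of the proof of Proposition~\ref{prp:power-h}, $\alpha\in{\reals}_+$ forces $f(i)\to\infty$ and hence $i/f(i+1)\to0$, so this factor tends to $1$ and, being a convergent sequence of positive reals, is bounded above by some $B<\infty$. For the lower bound I would write $\bigl(f(i+1)-f(i)\bigr)^{i} = h^{\lambda_h}\bigl(\tfrac{f(i+1)-f(i)}{f(i+1)-1}\bigr)^{i}$ and bound the quotient below, using $f(i+1)-1<f(i+1)$, by $1-\tfrac{f(i)}{f(i+1)}$. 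The factor $\bigl(1-\tfrac{f(i)}{f(i+1)}\bigr)^{i}$ is exactly the one factored in equation~(\ref{eqn:fi}): writing it as $\bigl(\bigl(1-\tfrac{f(i)}{f(i+1)}\bigr)^{f(i+1)/f(i)}\bigr)^{if(i)/f(i+1)}$ and using $f(i+1)/f(i)\to\infty$, $(1-1/k)^{k}\to e^{-1}$ and $if(i)/f(i+1)\to\alpha$, it converges to $e^{-\alpha}\in{\reals}_+$; since each of its terms is strictly positive ($f$ strictly increasing) and the limit is positive, it is bounded below by some $A>0$. Combining, $A\,h^{\lambda_h}\leq s(\mathbf{T}_h)\leq B\,h^{\lambda_h}$ for all $h = f(i+1)-1$, $i\in\nats$, which is the asserted $\Theta$-estimate.

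I expect no genuinely hard step: the observation is a repackaging of Observation~\ref{obs:easy-s-bounds} together with the limit computation already done in the proof of Proposition~\ref{prp:power-h}. The one point requiring care is the uniformity of $A$ and $B$ in $i$, for which I would explicitly invoke that a sequence of positive reals converging to a positive limit is bounded both above and away from $0$, so the finitely many small indices $i$ cause no trouble.
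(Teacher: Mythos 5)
Your proposal is correct and follows essentially the same route as the paper: the paper obtains Observation~\ref{obs:alpha} directly from Observation~\ref{obs:easy-s-bounds} evaluated at $h=f(i+1)-1$ with $k=f(i)$, together with the computation in equation~(\ref{eqn:fi}) showing the lower-to-upper ratio tends to $e^{-\alpha}\in\reals_+$. Your only additions — the explicit $(1+1/h)^{i}\to 1$ adjustment from base $h+1$ to base $h$ and the remark that a positive convergent sequence with positive limit is uniformly bounded above and away from zero — are exactly the details the paper leaves implicit, and they are handled correctly.
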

Recall that if $(a_n)_{n\geq 0}$ is a sequence of real numbers and 
$a_n\rightarrow\beta$, then the geometric mean 
$\sqrt[n]{a_1\cdots a_n}\rightarrow\beta$ as $n$ tends to infinity. In particular,
if $f$ is a threshold function with $\alpha\in {\reals}_+$, then we obtain
\[
\sqrt[n]{\frac{f(n)}{(n-1)!f(1)}} = 
\sqrt[n]{\prod_{i = 2}^{n}\frac{f(i)}{(i-1)f(i-1)}} \rightarrow\beta
\]
as $n$ tends to infinity, and hence for every $\epsilon > 0$ we then have 
\[
((\beta-\epsilon)f(1))^n(n-1)! < f(n) < ((\beta+\epsilon)f(1))^n(n-1)!
\]
for all sufficiently large $n\in\nats$. We therefore have a clear form of these threshold 
functions for which $\frac{f(i)}{(i-1)f(i-1)}$ have a positive real limit when $i$ tends to 
infinity.
\begin{claim}
\label{clm:threshold}
Every threshold function $f(n)$ with $\alpha\in {\reals}_+$ is always 
sandwiched between two functions $c_1^n(n-1)!$ and $c_2^n(n-1)!$ for all 
sufficiently large $n$ where $c_1 < c_2$ are positive real constants.
Further, these constants can be chosen so that $c_2 - c_1$ is arbitrarily small. 
\end{claim}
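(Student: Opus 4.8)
The plan is to read the statement essentially straight off the geometric-mean computation carried out in the paragraph immediately preceding the claim, so that the work is almost entirely bookkeeping. Recall from there the telescoping identity
\[
\frac{f(n)}{(n-1)!\,f(1)} \;=\; \prod_{i=2}^{n}\frac{f(i)}{(i-1)f(i-1)},
\]
together with the fact that, because $f$ is a threshold function with $if(i)/f(i+1)\to\alpha\in{\reals}_+$, the individual factors satisfy $\frac{f(i)}{(i-1)f(i-1)}\to\beta:=\alpha^{-1}>0$ as $i$ tends to infinity (this is just the reindexed reciprocal of the defining limit, and is exactly the hypothesis of Observation~\ref{obs:alpha}).

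First I would invoke the standard fact (already used in the form stated just before the claim) that for a sequence $(a_k)_{k\geq 1}$ of positive reals with $a_k\to\beta$ one has $\sqrt[k]{a_1\cdots a_k}\to\beta$. Applying this to $a_i=\frac{f(i)}{(i-1)f(i-1)}$ and combining with the telescoping identity gives $\bigl(\frac{f(n)}{(n-1)!\,f(1)}\bigr)^{1/n}\to\beta$. Since $f(1)$ is a fixed positive integer we also have $\sqrt[n]{f(1)}\to 1$, and multiplying these two convergent sequences yields the cleaner limit
\[
\Bigl(\frac{f(n)}{(n-1)!}\Bigr)^{1/n}\;\longrightarrow\;\beta
\]
as $n$ tends to infinity. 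Now fix any $\epsilon$ with $0<\epsilon<\beta$. By the definition of a limit there is an $N\in\nats$ so that $\beta-\epsilon<\bigl(\frac{f(n)}{(n-1)!}\bigr)^{1/n}<\beta+\epsilon$ for all $n\geq N$; raising to the $n$-th power and multiplying through by $(n-1)!$ gives
\[
(\beta-\epsilon)^{n}\,(n-1)! \;<\; f(n) \;<\; (\beta+\epsilon)^{n}\,(n-1)! \qquad (n\geq N).
\]
Setting $c_1=\beta-\epsilon$ and $c_2=\beta+\epsilon$ finishes the argument: both are positive reals (this is exactly why we restrict to $\epsilon<\beta$), $c_1<c_2$, and $c_2-c_1=2\epsilon$, which is as small as we please by the choice of $\epsilon$.

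I do not expect any serious obstacle here, since both ingredients — the telescoping identity and the convergence of geometric means — are already established in the text immediately above the claim. The only two points that need a little care are keeping $c_1$ strictly positive (handled by restricting $\epsilon<\beta$, which is available because $\beta>0$) and delivering the bound in the requested shape $c^{n}(n-1)!$ rather than $c^{n}(n-1)!\,f(1)$; the latter is precisely why I would fold the factor $\sqrt[n]{f(1)}\to 1$ into the limit \emph{before} extracting the $\epsilon$-bounds, instead of trying to absorb the constant $f(1)$ into the exponential base afterwards. It is worth noting that the conclusion does \emph{not} assert that $f(n)/(\beta^{n}(n-1)!)$ converges; it only traps $f(n)$ in a band whose base can be pushed arbitrarily close to $\beta$ from both sides, which is all the claim states and all that is needed for the subsequent discussion of threshold functions.
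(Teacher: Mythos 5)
Your proposal is correct and follows essentially the same route as the paper: the telescoping product $\frac{f(n)}{(n-1)!\,f(1)}=\prod_{i=2}^{n}\frac{f(i)}{(i-1)f(i-1)}$, the convergence of geometric means to $\beta=\alpha^{-1}$, and then an $\epsilon$-extraction of the two bounding functions. Your one refinement --- absorbing the factor $\sqrt[n]{f(1)}\to 1$ into the limit before taking $n$-th powers --- is a sensible tidying-up that delivers the bounds exactly in the form $c^{n}(n-1)!$ and in fact reads more cleanly than the paper's own displayed inequality at that point.
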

By the above Claim~\ref{clm:threshold} we see that threshold functions of the 
form $f(n) = c^n(n-1)!$
are of particular interest, as they collectively, for varying $c\in {\reals}_+$, 
form a class of boundary functions among all threshold functions.

Recall the Stirling's Approximation Formula 
$n! \sim \sqrt{2\pi n}\left(\frac{n}{e}\right)^n$. In fact, for all $n\in\nats$ we 
have $n! > \sqrt{2\pi n}\left(\frac{n}{e}\right)^n$, and so every threshold function
$f$ of the form $f_c(n) = c^n(n-1)!$ satisfies 
\[
f_c(n) \sim c^n\sqrt{2\pi(n-1)}\left(\frac{n-1}{e}\right)^{n-1} 
= c\sqrt{2\pi}(n-1)^{n-1/2}\left(\frac{c}{e}\right)^{n-1}.
\]
Note also that $(n - 1)^{n-1/2}\sim e^{-1/2}(n - 1/2)^{n - 1/2}$ as $n\rightarrow\infty$, 
therefore this together with the above display yields that
\begin{equation}
\label{eqn:f-sim}
f_c(n) \sim c\sqrt{2\pi}(n-1/2)^{n-1/2}\left(\frac{c}{e}\right)^{n-1/2}
\left(\frac{e}{c}\right)^{1/2}.
\end{equation}
We will use this from to express the mock-inverse $f^*$ explicitly and thereby
obtain a formula for $f^*$ for these threshold functions $f$. But first we 
present a result in elementary function theory. Since we have not seen it
explicitly stated anywhere we state it as a lemma with a proof.
\begin{lemma}
\label{lmm:inv-sim}
Let $f,g : {\reals}_+ \rightarrow {\reals}_+$ be increasing and bijective functions such that
(a) $f(x)\leq g(x)$ for all $x$, 
(b) $f(x)$ is concave up for all $x$ and 
(c) $f(x)\sim g(x)$ as $x\rightarrow\infty$. 
In this case we have $f^{-1}(x)\sim g^{-1}(x)$ as $x\rightarrow\infty$.
\end{lemma}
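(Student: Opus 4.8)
The plan is to sandwich the ratio $f^{-1}(x)/g^{-1}(x)$ between $1$ and $1+\varepsilon$ for every $\varepsilon>0$. The lower bound is immediate from (a): since $f$ and $g$ are increasing bijections of $\reals_+$, applying the increasing map $f^{-1}$ to $f(g^{-1}(y))\le g(g^{-1}(y))=y=f(f^{-1}(y))$ gives $g^{-1}(y)\le f^{-1}(y)$ for every $y>0$, so $f^{-1}(y)/g^{-1}(y)\ge 1$.

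For the upper bound I would first isolate the one consequence of convexity that is needed, namely that $x\mapsto f(x)/x$ is nondecreasing on $\reals_+$. Because $f$ is a bijection of $\reals_+$ onto $\reals_+$, the infimum of its range is $0$, so $\lim_{x\to 0^+}f(x)=0$. Then for $0<\eta<s<t$, convexity ((b)) gives $f(s)\le \frac{s-\eta}{t-\eta}f(t)+\frac{t-s}{t-\eta}f(\eta)$, and letting $\eta\to 0^+$ yields $f(s)\le \frac{s}{t}f(t)$, i.e. $f(s)/s\le f(t)/t$. Equivalently, for $a\ge b>0$ one has $f(a)/f(b)\ge a/b$.

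Now fix $\varepsilon>0$. By (c) together with (a) there is an $X$ with $f(x)\le g(x)\le (1+\varepsilon)f(x)$ for all $x\ge X$. Given $y$, set $a=f^{-1}(y)$ and $b=g^{-1}(y)$; the lower bound gives $a\ge b$, and since $f,g$ are bijections of $\reals_+$ both $a$ and $b$ tend to $\infty$ with $y$, so $b\ge X$ once $y$ is large. For such $y$ we get $f(a)=y=g(b)\le (1+\varepsilon)f(b)$, hence $f(a)/f(b)\le 1+\varepsilon$, and combining with the monotonicity of $f(x)/x$ applied to $a\ge b$ gives $a/b\le f(a)/f(b)\le 1+\varepsilon$. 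Thus $1\le f^{-1}(y)/g^{-1}(y)\le 1+\varepsilon$ for all sufficiently large $y$, and as $\varepsilon$ was arbitrary, $f^{-1}(x)\sim g^{-1}(x)$.

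The only real subtlety — the step I would double-check most carefully — is the passage from convexity to the monotonicity of $x\mapsto f(x)/x$, since this is where hypothesis (b) genuinely enters and it relies on $f$ being \emph{onto} $\reals_+$ (forcing $f(0^+)=0$). It is worth noting that (b) cannot be dropped: for instance $f(x)=\log(1+x)$ and $g(x)=\log(1+ex)$ are increasing bijections of $\reals_+$ with $f\le g$ and $f\sim g$, yet $f^{-1}(x)/g^{-1}(x)\to e$. Everything else in the argument is routine manipulation of the definitions of $\sim$ and of inverse functions.
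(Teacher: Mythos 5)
Your proof is correct and follows essentially the same route as the paper's: both arguments hinge on the fact that convexity of $f$ together with $f(0^+)=0$ makes $x\mapsto f(x)/x$ nondecreasing, which the paper encodes as a comparison of slopes from the origin (justified only by a figure) and which you derive explicitly. Your $\varepsilon$-formulation and the paper's step of dividing $\frac{f(t)}{t}\leq\frac{g(t)}{(f^{-1}\circ g)(t)}\leq\frac{g(t)}{t}$ by $g(t)/t$ are the same estimate written in reciprocal form, and your counterexample showing hypothesis (b) cannot be dropped is a worthwhile addition.
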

\begin{proof}
By assumption we then have 
\[
\frac{f(t)}{t} \leq \frac{g(t)}{(f^{-1}\circ g)(t)}\leq \frac{g(t)}{t}
\]

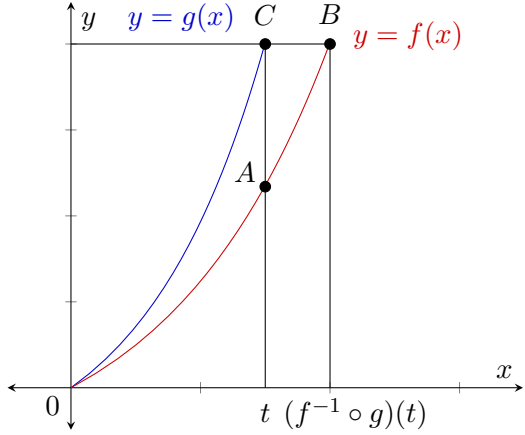
\begin{figure}
\begin{center}
\begin{tikzpicture}[>=stealth]
    \begin{axis}[
        xmin=-.49,xmax=3.49,
        ymin=-.49,ymax=4.49,
        axis x line=middle,
        axis y line=middle,
        axis line style=<->,
        xlabel={$x$},
        ylabel={$y$},
        xticklabels=\empty,
        yticklabels=\empty,
        ]
        \addplot [blue!80!black, domain=1:5] (.93*ln(x), x-1) node[pos=0.75,anchor=south east]{};
        \addplot [blue!80!black, domain=1:5] (1.35, 4.3) node[pos=0.75,anchor=east]{$y=g(x)$};
        \addplot [red!80!black, domain=1:5] (1.24*ln(x), x-1) node[pos=0.35,anchor= west]{};
        \addplot [red!80!black, domain=1:5] (2.1, 4.1) node[pos=0.35,anchor= west]{$y=f(x)$};
        \addplot [black!80!black, domain=0:4] (1.5, x) node[pos=0.15,anchor=east]{};
        \addplot [black!80!black, domain=0:4] (1.5, -.3) node[pos=0.15]{$t$};
        \addplot [black!80!black, domain=0:4] (2, x) node[pos=0.15,anchor=west]{};
        \addplot [black!80!black, domain=0:4] (2.2, -.3) node[pos=0.15]{$(f^{-1} \circ g)(t)$};
        \addplot [black!80!black, domain=0:2] (x, 4) node[pos=0.25,anchor=west]{};
        \addplot [black!80!black, domain=0:2] (1.5, 4) node[pos=0.25,anchor=south, circle]{$C$};
        \addplot [black!80!black, domain=0:2] (2, 4) node[pos=0.25,anchor=south, circle]{$B$};
        \addplot [black!80!black, domain=0:3] (1.5, 2.27) node[pos=0.25,anchor=south east, circle]{$A$};
        \addplot [black!80!black, domain=0:3] (0,0) node[pos=0.25,anchor=north east, circle]{$0$};

        \addplot [mark=*, color=black] table {
        1.5 4
        };
        \addplot [mark=*, color=black] table {
        2 4
        };
        \addplot [mark=*, color=black] table {
        1.5 2.34
        };
    \end{axis}
\end{tikzpicture}
\caption{Slope $\overline{0A} = \frac{f(t)}{t}$ {}$\le$ Slope $\overline{0B}= \frac{g(t)}{(f^{-1} \circ g)(t)}$ $\le$ Slope $\overline{0C}= \frac{g(t)}{t}$}\label{slopegraph}
\end{center}
\end{figure}

for every positive real number $t$ (see Figure~\ref{slopegraph}). Dividing by $g(t)/t$ we obtain
\[
\frac{f(t)}{g(t)} \leq \frac{t}{(f^{-1}\circ g)(t)} \leq 1
\]
for every positive real $t$ and hence $(f^{-1}\circ g)(t)/t$ tends to $1$ as 
$t\rightarrow\infty$. Letting $t = g^{-1}(x)$ and noting that $x\rightarrow\infty$
if and only if $t\rightarrow\infty$ we have $f^{-1}(x)/g^{-1}(x)\rightarrow 1$ as 
$x\rightarrow\infty$.
\end{proof}
The function on the right in (\ref{eqn:f-sim}) has a smooth extension to
the set of all positive real numbers
$g_c(x) = \sqrt{2\pi ec}(x-1/2)^{x-1/2}\left(\frac{c}{e}\right)^{x-1/2}$ 
for every $c\in {\reals}_+$ and it has an inverse $g_c^{-1}(x)$ which we will
express in terms of elementary functions and the Lambert $W$ function.
Recall that over the reals, the curve $z = we^w$ in the Euclidean plane ${\reals}^2$
yields a two-valued inverse $w = W(z)$; the {\em Lambert $W$ function}
whose two branches yield the inverses 
$W_0 : [-1/e,\infty[ \rightarrow [-1,\infty[$ (see Figure~\ref{LambertW}) and
$W_1 : [-1/e,0[ \rightarrow ]-\infty,-1]$. The Lambert $W$ function has been
studied extensively~\cite{Lambert-wiki,Lambert-Wolfram}.

\begin{figure}
\begin{center}
\begin{tikzpicture}[>=stealth]
    \begin{axis}[
        xmin=-2,xmax=19.9,
        ymin=-.9,ymax=2.9,
        axis x line=middle,
        axis y line=middle,
        axis line style=<->,
        xlabel={$x$},
        ylabel={$y$},
        ]
        \addplot [blue!80!black, domain=-1:3] (x * exp(x), x) node[pos=0.25,anchor=south east]{$y=W_0(x)$}; 
    \end{axis}
    
\end{tikzpicture}
\caption{The graph of the upper branch of the Lambert $W$ function $W_0 : [-1/e,\infty[ \rightarrow ]-1,\infty]$.}\label{LambertW}
\end{center}
\end{figure}
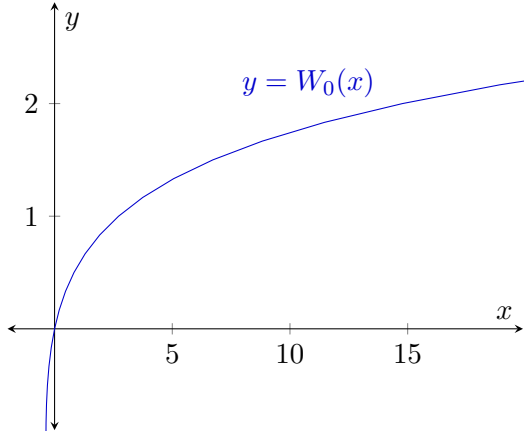

Letting $y = g_c(x)$ and so $x = g_c^{-1}(y)$ we then obtain from (\ref{eqn:f-sim}) 
\[
\frac{y}{\sqrt{2\pi ec}} = \left(\frac{c(x-1/2)}{e}\right)^{x-1/2}
\]
and hence, by putting both sides to the $c/e$-th power we obtain $z = {\omega}^{\omega}$
where 
\[
z:= \left(\frac{y}{\sqrt{2\pi ec}} \right)^{c/e} \mbox{ and } 
\omega := \frac{c(x - 1/2)}{e}.
\]
We then obtain $\log(z) = {\omega}\log(\omega) = \log(\omega)e^{\log(\omega)}$ and hence
$\log(\omega) = W_0(\log(z))$ or 
\[
\log\left(\frac{c(x-1/2)}{e}\right) 
= W_0\left(\frac{c}{e}\log\left(\frac{y}{\sqrt{2\pi ec}}\right)\right).
\]
Solving for $x$ we then have 
\begin{equation}
\label{eqn:W-formula}    
x = g_c^{-1}(y) = 
c^{-1}e^{W_0\left(\frac{c}{e}\log\left(\frac{y}{\sqrt{2\pi ec}}\right)\right) + 1} + 1/2.
\end{equation}
Since $g_c$ restricted to $\nats$ is strictly increasing, this restricted mock-inverse is therefore given by $g_c^*(n) = \lfloor g_c^{-1}(n)\rfloor$ for each $n\in\nats$.
By Claim~\ref{clm:threshold} and Lemma~\ref{lmm:inv-sim} we therefore have the following theorem.

\begin{theorem}
\label{thm:Lambert-main}
\begin{enumerate}
Let $c\in{\reals}_+$ be fixed.
\item The threshold function $f_c(n) = c^n(n-1)!$ has
a mock-inverse $f_c^*(n) \sim g_c^*(n) = \lfloor g_c^{-1}(n)\rfloor$ where $g_c^{-1}(y)$
is given by (\ref{eqn:W-formula}).  
\item Every threshold function $f$ has a mock-inverse $f^*$ that is sandwiched between
two mock-inverses 
$g_{c_1}^*(n) = \lfloor g_{c_1}^{-1}(n)\rfloor$ and 
$g_{c_2}^*(n) = \lfloor g_{c_2}^{-1}(n)\rfloor$ where $g_{c_1}^{-1}(y)$ and
$g_{c_2}^{-1}(y)$ have the form given by (\ref{eqn:W-formula}) for some 
positive real numbers $c_1$ and $c_2$. Further the difference $c_2-c_1$ can be 
arbitrarily small. 
\end{enumerate}
\end{theorem}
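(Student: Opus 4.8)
The plan is to prove part~(1) by applying Lemma~\ref{lmm:inv-sim} to a smooth interpolant of $f_c$, and then to read off part~(2) from part~(1) together with Claim~\ref{clm:threshold}. For part~(1) I would first record the routine fact that if $F\colon[x_0,\infty)\to[y_0,\infty)$ is continuous, strictly increasing and satisfies $F(i)=f(i)$ for every integer $i\ge x_0$, where $f\colon\nats\to\nats$ is strictly increasing, then the mock-inverse obeys $f^{*}(n)=\lfloor F^{-1}(n)\rfloor$ for all large $n$ — immediate from $f^{*}(n)=\max\{i\in\nats:f(i)\le n\}$ and strict monotonicity. The natural interpolant of $f_c(n)=c^{n}(n-1)!$ is $F_c(x):=c^{x}\Gamma(x)$, which agrees with $f_c$ at every integer $n\ge 1$. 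Since $(\log\Gamma)''=\psi'>0$, the function $\log F_c$ is convex, so $F_c$ is log-convex hence concave up, and it is strictly increasing on a suitable tail $[x_0,\infty)$. By Stirling's bounds for $\Gamma$ we have $F_c(x)\sim\sqrt{2\pi}\,x^{x-1/2}(c/e)^{x}$, with this Stirling leading term itself concave up, lying below $F_c$, and (comparing with the computation behind (\ref{eqn:f-sim})) differing from $g_c(x)$ only by a bounded multiplicative factor. Applying Lemma~\ref{lmm:inv-sim} on $[x_0,\infty)$ to the pair consisting of the Stirling leading term (which is $\le F_c$, concave up, and asymptotic to $F_c$) and $F_c$ shows that $F_c^{-1}$ is asymptotic to the inverse of that leading term; since $g_c^{-1}$, given explicitly by (\ref{eqn:W-formula}), grows only like $\log y/\log\log y$ (as $W_0(z)\sim\log z$) and is therefore insensitive to bounded rescalings of its argument, this yields $F_c^{-1}(y)\sim g_c^{-1}(y)$. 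Both sides tend to infinity, so taking floors preserves the equivalence, giving $f_c^{*}(n)=\lfloor F_c^{-1}(n)\rfloor\sim\lfloor g_c^{-1}(n)\rfloor=g_c^{*}(n)$, which is part~(1).

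For part~(2), let $f$ be a threshold function. By Claim~\ref{clm:threshold} there are positive reals $c_1<c_2$, with $c_2-c_1$ as small as we please, such that $f_{c_1}(n)=c_1^{n}(n-1)!\le f(n)\le c_2^{n}(n-1)!=f_{c_2}(n)$ for all sufficiently large $n$. The mock-inverse is order-reversing in its defining function: if $\phi(i)\le\psi(i)$ for all large $i$ then $\phi^{*}(n)\ge\psi^{*}(n)$ for all large $n$, since $\phi^{*}(n)=\max\{i:\phi(i)\le n\}$. Hence $f_{c_2}^{*}(n)\le f^{*}(n)\le f_{c_1}^{*}(n)$ for $n$ large, and applying part~(1) to $c_1$ and to $c_2$ gives $f_{c_i}^{*}(n)\sim g_{c_i}^{*}(n)=\lfloor g_{c_i}^{-1}(n)\rfloor$ with each $g_{c_i}^{-1}$ of the form (\ref{eqn:W-formula}). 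Thus $f^{*}$ is sandwiched between $g_{c_2}^{*}$ and $g_{c_1}^{*}$ in the stated sense, and $c_2-c_1$ may be taken arbitrarily small.

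The one genuinely delicate point is securing a smooth function that matches $f_c$ at the integers, is concave up, and is pinned with ratio tending to exactly $1$ (not merely $\Theta$) to a function whose inverse is (\ref{eqn:W-formula}); this is forced because Lemma~\ref{lmm:inv-sim} requires the hypothesis $f\sim g$, whereas $c^{x}\Gamma(x)$, the Stirling leading term $\sqrt{2\pi}\,x^{x-1/2}(c/e)^{x}$, and $g_c(x)=\sqrt{2\pi ec}\,(x-1/2)^{x-1/2}(c/e)^{x-1/2}$ differ pairwise by fixed multiplicative constants of size $e^{\pm 1/2}$. Carefully tracking these constants, and exploiting that $g_c^{-1}$ grows so slowly ($\sim\log y/\log\log y$) that such constants wash out in the limit, is where the work goes; the floor/mock-inverse bookkeeping in part~(1) and the order-reversal in part~(2) are routine. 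As a sanity check, part~(1) can also be obtained directly: if $i=f_c^{*}(n)$ then $f_c(i)\le n<f_c(i+1)$, and feeding Stirling into both inequalities pins $i$ between $g_c^{-1}(n(1-o(1)))-1$ and $g_c^{-1}(n(1+o(1)))$, both of which are $\sim g_c^{*}(n)$ by the same slow-growth remark.
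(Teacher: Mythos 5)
Your proof is correct and follows essentially the same route as the paper: Stirling's approximation identifies $f_c$ with $g_c$ so that Lemma~\ref{lmm:inv-sim} can transfer the asymptotics to the inverses, and Claim~\ref{clm:threshold} supplies the sandwich for part~(2). Your explicit interpolation by $c^x\Gamma(x)$ and your observation that the residual multiplicative constants (the paper's $g_c$ in fact differs from the true Stirling asymptotic of $f_c$ by a bounded factor) are absorbed because $g_c^{-1}$ grows only like $\log y/\log(\log y)$ make rigorous a step the paper leaves implicit, and they are handled correctly.
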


In~\cite{Hoorfar-Hassani} it is shown that for every $z\geq e$ we have the following
bounds
\[
\log z - \log(\log z) + \frac{\log(\log z)}{2\log z} 
\leq W_0(z) \leq 
\log z - \log(\log z) + \frac{e}{e-1}\frac{\log(\log z)}{\log z} 
\]
where $\log$ is here the natural logarithm. Consequentially we have
\[
W_0(z) = \log\left(\frac{z}{\log z}\right) + \epsilon(z)\frac{\log(\log z)}{\log z},
\]
where $\epsilon(z)$ is a real function with $1/2 \leq \epsilon(z)\leq e/(e-1)$ for each
$z\geq e$. Therefore we have that
\[
e^{W_0(z)} 
= e^{\log\left(\frac{z}{\log z}\right) + \epsilon(z)\frac{\log(\log z)}{\log z}}
\sim \frac{z}{\log z}
\]
as $z$ tends to infinity. Since we also have 
$\frac{az + b}{\log(az + b)} \sim a\cdot\frac{z}{\log z}$ when $z\rightarrow\infty$ 
for every $a,b\in{\reals}_+$, we then obtain, by the above discussion, that for 
every $c\in{\reals}_+$ when $y$ tends to infinity that 
\begin{eqnarray*}
g_c^{-1}(y) & = & 
  c^{-1}e^{W_0\left(\frac{c}{e}\log\left(\frac{y}{\sqrt{2\pi ec}}\right)\right) + 1} + 1/2 \\
  & \sim & 
  c^{-1}e^{W_0\left(\frac{c}{e}\log\left(\frac{y}{\sqrt{2\pi ec}}\right)\right)} \\
  & \sim & 
  c^{-1}\frac{\frac{c}{e}\log\left(\frac{y}{\sqrt{2\pi ec}}\right)}{\log\left(\frac{c}{e}\log\left(\frac{y}{\sqrt{2\pi ec}}\right)\right)} \\
  & \sim & 
  \frac{1}{e}\frac{\log y}{\log(\log y)}.
\end{eqnarray*}
By Observation~\ref{obs:alpha}, Theorem~\ref{thm:Lambert-main}, the above discussion and 
Observation~\ref{obs:alpha} we then have the following.
\begin{corollary}
\label{cor:Lambert}
Let $\mathbf{T}$ be an LFOR-i-tree with an infinite width and
suppose $\lambda(\mathbf{T}_h)= \lambda_h = f^{*}(h)$ where $f$ is a threshold function. 
In this case we have 
(i) $s(\mathbf{T}_h) = \Theta(h^{\lambda_h})$ for infinitely many $h\in\nats$, and
(ii) the mock-inverse satisfies
\[
f^*(n) \sim \lfloor g_c^{-1}(n)\rfloor \sim 
\left\lfloor\frac{1}{e}\frac{\log n}{\log(\log n)}\right\rfloor,
\]
where $g_c^{-1}(y)$ is given by (\ref{eqn:W-formula}).
\end{corollary}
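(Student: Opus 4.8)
The plan is to obtain part~(i) directly from Observation~\ref{obs:alpha} and part~(ii) by squeezing the mock-inverse $f^*$ between the two explicit Lambert-$W$ mock-inverses furnished by Theorem~\ref{thm:Lambert-main}, using the asymptotic evaluation of $g_c^{-1}$ carried out in the paragraph just preceding the statement. First I would unwind the hypothesis: by Convention~\ref{cvn:well-behaved}, saying that $f$ is a threshold function means exactly that $\alpha := \lim_{i\to\infty} i\,f(i)/f(i+1)$ lies in $\reals_+$, equivalently (after reindexing) that $\beta := \alpha^{-1} = \lim_{i\to\infty} f(i)/((i-1)f(i-1)) \in \reals_+$. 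With this identification, Observation~\ref{obs:alpha} applies verbatim and gives $s(\mathbf{T}_h) = \Theta(h^{\lambda_h})$ for every $h$ of the form $h = f(i+1)-1$ with $i\in\nats$; since $f$ is strictly increasing these values of $h$ are pairwise distinct, so there are infinitely many of them. This proves~(i).

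For~(ii), I would start from the computation already displayed above the corollary, which shows that for \emph{every} fixed $c\in\reals_+$ one has $g_c^{-1}(y)\sim \frac{1}{e}\,\frac{\log y}{\log(\log y)}$ as $y\to\infty$. In particular $g_c^{-1}(n)\to\infty$, so passing to integer parts changes nothing asymptotically: $g_c^*(n)=\lfloor g_c^{-1}(n)\rfloor \sim \frac{1}{e}\,\frac{\log n}{\log(\log n)}$ for every $c$. Next I would invoke Theorem~\ref{thm:Lambert-main}(2): there exist positive reals $c_1<c_2$ such that $f^*(n)$ is sandwiched between $g_{c_1}^*(n)$ and $g_{c_2}^*(n)$ for all large $n$. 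Since both bounding functions are asymptotically $\frac{1}{e}\,\frac{\log n}{\log(\log n)}$, a squeeze forces $f^*(n)\sim \frac{1}{e}\,\frac{\log n}{\log(\log n)}$; chaining this with $g_c^*(n)\sim \frac{1}{e}\,\frac{\log n}{\log(\log n)}$ yields $f^*(n)\sim \lfloor g_c^{-1}(n)\rfloor \sim \bigl\lfloor \frac{1}{e}\,\frac{\log n}{\log(\log n)}\bigr\rfloor$, which is the assertion.

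I do not expect a genuine obstacle here: all the analytic machinery — the bounds on $W_0$ from~\cite{Hoorfar-Hassani}, Lemma~\ref{lmm:inv-sim}, the Stirling-based form~(\ref{eqn:f-sim}), and Theorem~\ref{thm:Lambert-main} itself — is already established, so the corollary is mostly bookkeeping. The only points that need a moment's care are the two elementary facts used silently in the squeeze: that a \emph{larger} strictly increasing function has a \emph{smaller} (or at least no larger) mock-inverse, so the sandwich inequalities from Theorem~\ref{thm:Lambert-main}(2) run in the expected direction, and that $\lfloor a_n\rfloor\sim a_n$ whenever $a_n\to\infty$, which is what lets every floor be dropped and reinstated freely without affecting the $\sim$ relations. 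Neither of these disturbs the conclusion.
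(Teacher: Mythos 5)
Your proposal is correct and follows essentially the same route as the paper, which derives part (i) from Observation~\ref{obs:alpha} and part (ii) from the sandwich in Theorem~\ref{thm:Lambert-main}(2) combined with the asymptotic evaluation $g_c^{-1}(y)\sim \frac{1}{e}\frac{\log y}{\log(\log y)}$ established just before the corollary. Your two explicit side remarks (the direction of the mock-inverse sandwich and the harmlessness of floors when the quantities tend to infinity) are points the paper leaves implicit, but they do not change the argument.
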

{\sc Remark:} Note that the last limit function in the above Corollary~\ref{cor:Lambert}
does not depend on $c\in{\reals}_+$ and therefore not on the actual limit of 
$if(i)/f(i+1)$ when $i$ tends to infinity for the threshold function $f$; 
only on the fact that the limit exists as a positive real number.

Our above discussion from Observation~\ref{obs:easy-s-bounds} 
begs the following question that would extend Proposition~\ref{prp:power-h} to an if and only if statement.
\begin{question}
\label{qst:which-function}  
For which functions $h\mapsto f(h)$ do we have for all LFOR-i-trees
$\mathbf{T}$ with $\lambda_h = \Theta(f(h))$
that $s(\mathbf{T}_h)\sim h^{\lambda_h}$ for infinitely
many $h$?
\end{question}

\subsection{Theoretical bounds}
We now turn our attention to the property of an exact theoretical bound.
An noted before, if $\mathbf{T}$ is the LFOR-i-tree with
$\lambda_h = \lambda$; a constant for each $h\geq 1$,
then $s(\mathbf{T}_h) = (h+1)^{\lambda}$. Hence, with this and
Theorem~\ref{thm:poly-iff} in mind, it seems appropriate
to use $h+1$ as the base number for the function $s(\mathbf{T}_h)$
when viewed as a power function when $h$ tends to infinity.
Suppose now an LFOR-i-tree $\mathbf{T}$ has an infinite width
$w(\mathbf{T}) = \lambda(\mathbf{T}) = \infty$.
Let $\ell(h)$ be the unique real function
such that $s(\mathbf{T}_h) = (h+1)^{\ell(h)}$. The main purpose of
this subsection is to prove the following:
\begin{theorem}
\label{thm:lambda-infty}
If $\mathbf{T}$ LFOR-i-tree with an infinite width
$w(\mathbf{T}) = \lambda(\mathbf{T}) = \infty$ and
$s(\mathbf{T}_h) = (h+1)^{\ell(h)}$ for each $h\in\nats$
then
(i) $\ell(h)\leq \lambda_h$ for each $h$,
(ii) $\ell(h)$ is an increasing function of $h$ and
(iii) $\lim_{h\rightarrow\infty}\ell(h) = \infty$.
\end{theorem}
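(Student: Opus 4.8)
The plan is to get (i) and (iii) immediately from Observation~\ref{obs:easy-s-bounds} and to spend the real effort on (ii). For (i), the upper bound $s(\mathbf{T}_h)\le(h+1)^{\lambda_h}$ reads $(h+1)^{\ell(h)}\le(h+1)^{\lambda_h}$, and since $h+1\ge 2$ this forces $\ell(h)\le\lambda_h$. For (iii), fix $D\in\nats$; since $w(\mathbf{T})=\lambda(\mathbf{T})=\infty$ and $(\lambda_h)_{h\ge 0}$ is nondecreasing, there is $k\in\nats$ with $\lambda_k\ge D+1$, and the lower bound $s(\mathbf{T}_h)\ge(h-k+1)^{\lambda_k}$ of Observation~\ref{obs:easy-s-bounds} gives $\ell(h)\ge\lambda_k\cdot\frac{\ln(h-k+1)}{\ln(h+1)}$; the fraction tends to $1$, so $\ell(h)>D$ for all large $h$, and since $D$ was arbitrary, $\ell(h)\to\infty$.

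For (ii) I would first rewrite the claim $\ell(h+1)\ge\ell(h)$ as the inequality $s(\mathbf{T}_{h+1})\ge s(\mathbf{T}_h)^{\ln(h+2)/\ln(h+1)}$, which is the same thing after taking logarithms. To prove this inequality I would introduce, for any LFOR-i-tree $R$ and $m\ge 0$, the tree $R_m^{+}$ obtained from $R_m$ by hanging one new pendant leaf below every vertex of level $m$, and set $\sigma(R,m):=s(R_m^{+})$. Two observations make this useful. First, a \emph{thinning} step: $\mathbf{T}_h^{+}$ injects into $\mathbf{T}_{h+1}$ — keep the root and, below each level-$h$ vertex of $\mathbf{T}$, retain exactly one of its children in $\mathbf{T}_{h+1}$ and discard the rest — so every OR-subtree of $\mathbf{T}_h^{+}$ is the image of an OR-subtree of $\mathbf{T}_{h+1}$, whence $s(\mathbf{T}_{h+1})\ge\sigma(\mathbf{T},h)$. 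Second, $\sigma$ satisfies the same product recursion as $s$: if the root of $R$ has children $w_1,\dots,w_l$ and $R^{(j)}:=R(w_j)$, then Lemma~\ref{lmm:OR-subtrees} yields $\sigma(R,m)=\prod_{j=1}^{l}\bigl(1+\sigma(R^{(j)},m-1)\bigr)$ and $s(R_m)=\prod_{j=1}^{l}\bigl(1+s(R^{(j)}_{m-1})\bigr)$, the only difference being the base value $\sigma(R,0)=2$ versus $s(R_0)=1$.

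The core of the argument is then an induction on $m\ge 1$ proving, uniformly over all LFOR-i-trees $R$, the estimate $\sigma(R,m)\ge s(R_m)^{q_m}$ with $q_m:=\ln(m+2)/\ln(m+1)$. The base case $m=1$ is the exact identity $\sigma(R,1)=3^{l}=(2^{l})^{q_1}=s(R_1)^{q_1}$. In the inductive step ($m\ge 2$), plugging the hypothesis into the recursion reduces matters to the numerical inequality $\prod_j\bigl(1+a_j^{\,q_{m-1}}\bigr)\ge\prod_j\bigl(1+a_j\bigr)^{q_m}$ for $a_j:=s(R^{(j)}_{m-1})$; and since each $R^{(j)}$ is an LFOR-i-tree it contains a root path of length $m-1$, so $a_j\ge m$. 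Taking logarithms, the inequality becomes $\sum_j f(a_j)\ge 0$ where $f(a):=\ln(1+a^{\,q_{m-1}})-q_m\ln(1+a)$, and I would establish the elementary lemma that $f(m)=0$ — this is exactly the algebraic identities $m^{q_{m-1}}=m+1$ and $q_m\ln(m+1)=\ln(m+2)$ — while $f$ is strictly increasing on $(1,\infty)$; the latter follows from $q_{m-1}>q_m$, valid because in $q_m=1+\frac{\ln(1+1/(m+1))}{\ln(m+1)}$ the numerator decreases and the denominator increases in $m$, and this forces $f'(a)>\frac{q_{m-1}-q_m}{1+a}>0$ for $a>1$. Hence $f(a_j)\ge f(m)=0$. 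Applying the estimate with $R=\mathbf{T}$ and $m=h$ and combining with the thinning step gives $s(\mathbf{T}_{h+1})\ge\sigma(\mathbf{T},h)\ge s(\mathbf{T}_h)^{q_h}$, which is (ii).

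I expect the induction in the last paragraph to be the main obstacle. The naive term-by-term comparison $1+a^{q_{m-1}}\ge(1+a)^{q_m}$ simply fails for $a$ near $1$ (there $1+a^{q_{m-1}}<(1+a)^{q_m}$), and what saves the argument is twofold: the arguments $a_j=s(R^{(j)}_{m-1})$ occurring in the recursion are forced to be at least $m$, and at $a=m$ the function $f$ vanishes exactly. The estimate $\sigma(R,m)\ge s(R_m)^{q_m}$ is sharp — it is an equality when $R$ is an infinite path — so no cruder comparison could work; the content of the thinning step is precisely that this lossless behavior occurs along the spine of a general $\mathbf{T}$ while every branching elsewhere makes the inequality strict.
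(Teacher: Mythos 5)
Your proposal is correct, and parts (ii) and (iii) take genuinely different routes from the paper. For (i) you argue exactly as the paper does, from the upper bound in Observation~\ref{obs:easy-s-bounds}. For (iii) the paper invokes Theorem~\ref{thm:poly-iff} to produce a sequence $(h_k)$ with $s(\mathbf{T}_{h_k})>(h_k+1)^k$ and then needs the monotonicity from (ii) to upgrade this subsequence statement to a full limit; your argument instead reads the divergence directly off the lower bound $(h-k+1)^{\lambda_k}\le s(\mathbf{T}_h)$ of Observation~\ref{obs:easy-s-bounds} and is independent of (ii), which is a mild simplification. The real divergence is in (ii). The paper isolates Lemma~\ref{lmm:uniform} (if $s(\mathbf{T}_h)\ge(h+1)^k$ then $s(\mathbf{T}_{h+1})\ge(h+2)^k$ -- logically the same one-step inequality $s(\mathbf{T}_{h+1})\ge s(\mathbf{T}_h)^{\ln(h+2)/\ln(h+1)}$ that you prove) and establishes it by induction on the leaf count $\lambda_h$, splitting $\mathbf{T}$ at the deepest singleton level into two LFOR-i-subtrees and invoking the monotonicity of $x\mapsto(x-\gamma)^{\beta}-x^{\alpha}$ (Claim~\ref{clm:abgamma}). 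You instead induct on the height $m$, uniformly over all LFOR-i-trees, via the grafted tree $R_m^{+}$ and the estimate $\sigma(R,m)\ge s(R_m)^{q_m}$; the thinning embedding $\mathbf{T}_h^{+}\hookrightarrow\mathbf{T}_{h+1}$ (valid because every vertex of an LFOR-i-tree has a child) then transfers this to $s(\mathbf{T}_{h+1})$. Your reduction to the scalar inequality $1+a^{q_{m-1}}\ge(1+a)^{q_m}$ for $a\ge m$ is sound: the bound $a_j=s(R^{(j)}_{m-1})\ge m$ from the root path, the exact identity $f(m)=0$ via $m^{\ln(m+1)/\ln m}=m+1$, and the derivative estimate $f'(a)>(q_{m-1}-q_m)/(1+a)>0$ all check out. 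What your route buys is a sharp, self-contained one-step inequality whose extremal case (the infinite path) is explicitly identified, at the cost of carrying the auxiliary quantity $\sigma$ and a uniform quantification over all LFOR-i-trees; the paper's route is shorter on paper but leans on the structural two-subtree decomposition and the separate analytic Claim~\ref{clm:abgamma}. Either argument suffices for the theorem.
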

To prove Theorem~\ref{thm:lambda-infty} we first need a key lemma:
\begin{lemma}
\label{lmm:uniform}
If $s(\mathbf{T}_h) \geq (h+1)^k$ then $s(\mathbf{T}_{h+1})\geq (h+2)^k$.   
\end{lemma}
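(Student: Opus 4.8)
The plan is to prove Lemma~\ref{lmm:uniform} by induction on $h\geq 1$ (which is the only range relevant for Theorem~\ref{thm:lambda-infty}, where $\ell(h)$ is defined), and to record it in the slightly stronger form that permits an arbitrary real exponent $k$, since that is what the proof of Theorem~\ref{thm:lambda-infty} applies (with $k=\ell(h)$). The case $k\leq 1$ is disposed of once and for all: for any LFOR-i-tree $\mathbf{S}$ and any $j$, the subtree $\mathbf{S}_j$ has height exactly $j$ by Observation~\ref{obs:i-tree}, so the $j+1$ prefixes of a root-to-leaf path give $s(\mathbf{S}_j)\geq j+1\geq (j+1)^k$ whenever $k\leq 1$; in particular $s(\mathbf{T}_{h+1})\geq h+2\geq (h+2)^k$. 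Hence we may assume $k>1$ in what follows.

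For the base case $h=1$, write $C(r)=\{u_1,\dots,u_m\}$ and $T_i=\mathbf{T}(u_i)$. By Lemma~\ref{lmm:OR-subtrees}, $s(\mathbf{T}_1)=\prod_{i=1}^m\bigl(1+s((T_i)_0)\bigr)=2^m$, so the hypothesis $s(\mathbf{T}_1)\geq 2^k$ is exactly $m\geq k$; and since every $u_i$ has at least one child (it is an i-vertex), $s((T_i)_1)\geq 2$, hence $s(\mathbf{T}_2)=\prod_{i=1}^m\bigl(1+s((T_i)_1)\bigr)\geq 3^m\geq 3^k$. For the inductive step at $h\geq 2$, again put $T_i=\mathbf{T}(u_i)$ (each an LFOR-i-tree) and set the real numbers $j_i:=\log_h s((T_i)_{h-1})$. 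Since $(T_i)_{h-1}$ has height exactly $h-1$ by Observation~\ref{obs:i-tree}, we get $s((T_i)_{h-1})\geq h$, so $j_i\geq 1$. Applying the induction hypothesis to $T_i$ at height $h-1$ with exponent $j_i$ yields $s((T_i)_h)\geq (h+1)^{j_i}$, and therefore, using Lemma~\ref{lmm:OR-subtrees} twice,
\[
s(\mathbf{T}_{h+1})=\prod_{i=1}^m\bigl(1+s((T_i)_h)\bigr)\geq\prod_{i=1}^m\bigl(1+(h+1)^{j_i}\bigr),
\qquad
s(\mathbf{T}_h)=\prod_{i=1}^m\bigl(1+h^{j_i}\bigr)\geq (h+1)^k .
\]
It remains to upgrade the lower bound on $\prod_i\bigl(1+(h+1)^{j_i}\bigr)$ to one of the shape $(h+2)^k$.

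The key (and only genuine) computation is the elementary fact that for every real $j\geq 1$ the function $x\mapsto\psi_j(x):=\dfrac{\log(1+x^{\,j})}{\log(1+x)}$ is non-decreasing on $[1,\infty)$. Checking $\psi_j'\geq 0$ reduces, after clearing denominators, to $jx^{j-1}(1+x)\log(1+x)\geq (1+x^{\,j})\log(1+x^{\,j})$, which follows from the two inequalities $1+x^{\,j}\leq x^{j-1}(1+x)$ (valid since $x^{j-1}\geq 1$ for $x\geq 1$) and $\log(1+x^{\,j})\leq j\log(1+x)$ (valid since $(1+x)^j\geq 1+x^{\,j}$ for $x\geq 0$, $j\geq 1$). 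Granting this, $\psi_{j_i}(h)\leq\psi_{j_i}(h+1)$ rearranges to $1+(h+1)^{j_i}\geq (h+2)^{\,\log_{h+1}(1+h^{j_i})}$ for each $i$, whence
\[
s(\mathbf{T}_{h+1})\geq\prod_{i=1}^m(h+2)^{\,\log_{h+1}(1+h^{j_i})}
=(h+2)^{\,\sum_{i}\log_{h+1}(1+h^{j_i})}
=(h+2)^{\,\log_{h+1} s(\mathbf{T}_h)}\geq (h+2)^k ,
\]
the last step using $s(\mathbf{T}_h)\geq (h+1)^k$. This closes the induction.

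The step I expect to be the main obstacle is precisely the monotonicity of $\psi_j$. It is unavoidable: the naive route of tracking only $\sum_i j_i$ does give $\prod_i\bigl(1+(h+1)^{j_i}\bigr)\geq (h+1)^{\sum_i j_i}\geq (h+1)^k$ (one checks $\sum_i j_i\geq k$ from the two displayed product inequalities), but it loses exactly the factor $\bigl((h+2)/(h+1)\bigr)^k$ that is needed. Moreover $\psi_j$-monotonicity is, in the case $m=1$, essentially the lemma itself, so it cannot be extracted from the induction hypothesis and must be proved directly, as above. (One should also double-check the harmless edge cases: $j_i=1$ makes the invocation of the induction hypothesis fall into the trivial $k\leq 1$ regime, and $h=0$ is outside the asserted range, where indeed the statement can fail.)
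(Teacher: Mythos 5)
Your proof is correct, but it takes a genuinely different route from the paper's. The paper inducts on the number of leaves $\lambda_h=\lambda(\mathbf{T}_h)$: it locates the deepest level $\ell$ carrying a single vertex $u_\ell$, splits $\mathbf{T}(u_\ell)$ into two LFOR-i-subtrees sharing only the root $u_\ell$, writes $s(\mathbf{T}_h)=\ell+(h-\ell+1)^{\alpha_1+\alpha_2}$, and absorbs the additive constant $\ell$ via the elementary fact (Claim~\ref{clm:abgamma}) that $(x-\gamma)^{\beta}-x^{\alpha}$ is increasing. You instead induct on the height $h$, decompose at the root into \emph{all} children's subtrees via Lemma~\ref{lmm:OR-subtrees}, encode each factor as $1+h^{j_i}$, and absorb the additive $1$ in every factor simultaneously through the monotonicity of $\psi_j(x)=\log(1+x^j)/\log(1+x)$, after which the product collapses cleanly to $(h+2)^{\log_{h+1}s(\mathbf{T}_h)}$. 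Both arguments ultimately rest on a one-variable convexity-type inequality playing the same role (yours handles the ``$+1$'' inside a product, the paper's handles the ``$+\ell$'' outside it), but your version has some advantages: it treats an arbitrary number of children uniformly without having to hunt for the first branching level, it is stated and verified for arbitrary real exponents $k$ (which is what Theorem~\ref{thm:lambda-infty} actually uses, with $k=\ell(h)$), and it makes explicit the restriction $h\geq 1$ together with the observation that the statement genuinely fails at $h=0$ --- a point the paper's formulation leaves implicit. The price is that the key monotonicity of $\psi_j$ requires a short derivative computation, whereas Claim~\ref{clm:abgamma} is checked by inspection; your reduction of $\psi_j'\geq 0$ to the two inequalities $1+x^j\leq x^{j-1}(1+x)$ and $(1+x)^j\geq 1+x^j$ is correct, so this is not a gap.
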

To prove Lemma~\ref{lmm:uniform} we need a technical fact from
elementary function theory.
\begin{claim}
\label{clm:abgamma}
If $\beta > \alpha > 1$ and $\gamma \geq 1$ are real constants,
then the real function $g(x) = (x - \gamma)^{\beta} - x^{\alpha}$
is strictly increasing for $x\geq \gamma$.
\end{claim}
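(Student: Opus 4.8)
The plan is to prove strict monotonicity on the whole half-line $[\gamma,\infty)$ by establishing $g'(x)>0$ for every $x>\gamma$ and then using continuity of $g$ at the endpoint to pass from the open interval to the closed one. First I would compute
\[
g'(x) = \beta(x-\gamma)^{\beta-1} - \alpha x^{\alpha-1},
\]
so that the entire claim reduces to the single inequality $\beta(x-\gamma)^{\beta-1} > \alpha x^{\alpha-1}$ holding for all $x>\gamma$.

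To attack this inequality uniformly I would pass to logarithms. Since $x^{\alpha-1}>0$, the condition $g'(x)>0$ is equivalent to $\psi(x)>0$, where
\[
\psi(x) = \log(\beta/\alpha) + (\beta-1)\log(x-\gamma) - (\alpha-1)\log x .
\]
I would then show $\psi$ is strictly increasing on $(\gamma,\infty)$: a short computation gives
\[
\psi'(x) = \frac{\beta-1}{x-\gamma} - \frac{\alpha-1}{x} = \frac{(\beta-\alpha)x + (\alpha-1)\gamma}{x(x-\gamma)},
\]
whose numerator is a sum of positive terms because $\beta>\alpha$, $\alpha>1$, and $\gamma\geq 1$, so $\psi'>0$ throughout. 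Thus $\psi$ increases monotonically, and the proof of the full claim comes down to controlling the single quantity $\psi$ from below across all of $(\gamma,\infty)$.

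The step I expect to be the main obstacle is exactly the behaviour near the left endpoint. As $x\downarrow\gamma$ the term $(\beta-1)\log(x-\gamma)\to-\infty$, so $\psi(x)\to-\infty$ and correspondingly $g'(\gamma)=-\alpha\gamma^{\alpha-1}<0$; the clean derivative-positivity argument therefore does not, on its own, reach down to $x=\gamma$. Since $\psi$ is strictly increasing it has a unique zero $x_0>\gamma$, and the hard part of proving the statement as worded is precisely to account for the sub-interval $[\gamma,x_0]$, where the derivative test runs the wrong way. Closing this gap is where any proof of monotonicity on all of $[\gamma,\infty)$ must do genuine work --- either by importing an additional hypothesis that forces $\psi\geq 0$ down to the endpoint, or by a direct comparison of the values of $g$ on $[\gamma,x_0]$ that circumvents the sign of $g'$ there --- and I would concentrate the remaining effort on this endpoint region, which carries the full difficulty of the claim.
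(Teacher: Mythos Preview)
Your instinct is exactly right, and in fact the difficulty you identify near the endpoint is fatal: the claim as stated is \emph{false}. Take $\alpha=2$, $\beta=3$, $\gamma=1$; then $g(x)=(x-1)^3-x^2$ has $g(1)=-1$ and $g(2)=-3$, so $g$ is not increasing on $[\gamma,\infty)$. Your computation $g'(\gamma)=-\alpha\gamma^{\alpha-1}<0$ already shows this in general: no additional hypothesis short of restricting the domain away from $\gamma$ will rescue the statement as written.

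The paper's proof takes a different route from yours. It factors
\[
g(x)=x^{\alpha}\Bigl((x-\gamma)^{\beta-\alpha}(1-\gamma/x)^{\alpha}-1\Bigr)
\]
and then argues that since $x^{\alpha}$, $(x-\gamma)^{\beta-\alpha}$, and $(1-\gamma/x)^{\alpha}$ are each strictly increasing, so is $g$. The flaw is that the bracketed factor equals $-1$ at $x=\gamma$ and remains negative on an initial interval; the product of a positive increasing function with a \emph{negative} increasing function need not be increasing (e.g.\ $x^{2}\cdot(-1/x)=-x$). So the paper's argument has precisely the same gap you located via the derivative, just disguised by the factorization. Your analysis is therefore not an incomplete proof so much as a correct diagnosis that the stated claim cannot be proved.
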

\begin{proof}
Clearly we have
\[
g(x) = x^{\alpha}\left((x-\gamma)^{\beta-\alpha}
\left(1 - \frac{\gamma}{x}\right)^{\alpha} - 1\right).
\]
Since each of the functions $x^{\alpha}$, $(x-\gamma)^{\beta-\alpha}$ and
$\left(1 - \frac{\gamma}{x}\right)^{\alpha}$ are strictly increasing for
$x\geq\gamma$, then so is $g(x)$.
\end{proof}
\begin{proof}[Proof of Lemma~\ref{lmm:uniform}]
We will use induction on $\lambda_h = \lambda(\mathbf{T}_h)$.
If $\lambda_h = 1$, then $s(\mathbf{T}_h) = h+1$ and so
$s(\mathbf{T}_{h+1})\geq h+2$ (in fact we have equality if $\lambda_{h+1} = 1$ as well).

Let $\lambda\geq 2$ be given and assume the Lemma holds for
$\mathbf{T}_h$ if $\lambda_h\leq\lambda$. For the LFOR-i-tree $\mathbf{T}$
there is a largest $\ell\in \{1,\ldots,h\}$ such that $\lambda_{\ell} = 1$
(and so $\lambda_{\ell+1} \geq 2$.) Let $u_{\ell}$ denote the unique vertex
of $\mathbf{T}$ on level $\ell$. In this case the LFOR-i-tree
$\mathbf{T}(u_{\ell})$ rooted at $u_{\ell}$ can be partitioned into two
LFOR-i-trees $\mathbf{T}_1(u_{\ell})$ and $\mathbf{T}_2(u_{\ell})$,
both rooted at $u_{\ell}$ and otherwise vertex disjoint. Note that vertices
on level $h$ in $\mathbf{T}$ will be on level $h-\ell$ in either
$\mathbf{T}_1(u_{\ell})$ or $\mathbf{T}_2(u_{\ell})$. From this we see
that $\lambda(\mathbf{T}_1(u_{\ell})_{h-\ell})
+ \lambda(\mathbf{T}_2(u_{\ell})_{h-\ell}) = \lambda(\mathbf{T}_h) = \lambda_h$
where each $\lambda(\mathbf{T}_i(u_{\ell})_{h-\ell})\geq 1$ for $h\geq \ell+1$. Hence, induction
hypothesis applies to the LFOR-i-tree $\lambda(\mathbf{T}_i(u_{\ell})_h)$
for $i=1,2$. As displayed in (\ref{eqn:s-poly}) (where the role of $\ell$
is here in this proof is shifted downward one step) we then have
\begin{equation}
\label{eqn:ell-induction}
s(\mathbf{T}_h) =
\ell + s(\mathbf{T}_1(u_{\ell})_{h-\ell})_h)s(\mathbf{T}_2(u_{\ell})_{h-\ell}).
\end{equation}
For each $s(\mathbf{T}_i(u_{\ell})_{h-\ell})$ let $\alpha_i\in{\reals}_+$ be such that
$s(\mathbf{T}_i(u_{\ell})_{h-\ell}) = (h-\ell+1)^{\alpha_i}$ and therefore
\[
s(\mathbf{T}_{h}) = 
\ell + s(\mathbf{T}_1(u_{\ell})_{h-\ell})_h)s(\mathbf{T}_2(u_{\ell})_{h-\ell})
= \ell + (h-\ell+1)^{\alpha_1 + \alpha_2}.
\]
Writing $s(\mathbf{T}_{h})
= \ell + (h-\ell+1)^{\alpha_1 + \alpha_2} = (h+1)^{\alpha}$ for
an appropriate $\alpha\in{\reals}_+$, we then get since $h\geq \ell$ by 
(\ref{eqn:ell-induction}) and induction hypothesis that
\[
s(\mathbf{T}_{h+1}) =
\ell + s(\mathbf{T}_1(u_{h-\ell+1})_h)s(\mathbf{T}_2(u_{\ell})_{h-\ell+1})
 \geq \ell + (h-\ell+2)^{\alpha_1 + \alpha_2}.
 \]
Since $\ell + (h-\ell+1)^{\alpha_1 + \alpha_2} = (h+1)^{\alpha}$ we get
by Claim~\ref{clm:abgamma} and above display that
 \[
 s(\mathbf{T}_{h+1})
 = \ell + (h-\ell+2)^{\alpha_1 + \alpha_2} \geq (h+2)^{\alpha}.
 \]
Therefore if $s(\mathbf{T}_h) = (h+1)^{\alpha} \geq (h+1)^k$
then $s(\mathbf{T}_{h+1})\geq (h+2)^{\alpha}\geq (h+2)^k$.
this completes the inductive argument.
\end{proof}
We can now prove Theorem~\ref{thm:lambda-infty}.
\begin{proof}[Proof of Theorem~\ref{thm:lambda-infty}]
By Observation~\ref{obs:easy-s-bounds} we have
$s(\mathbf{T}_h)\leq (h+1)^{\lambda_h}$ and hence we have (i).

By Lemma~\ref{lmm:uniform} we have
$s(\mathbf{T}_{h+1}) = (h+2)^{\ell(h+1)}\geq (h+2)^{\ell(h)}$
and so $\ell(h+1)\geq \ell(h)$ for each $h\geq 1$. Hence
$\ell(h)$ is an increasing function of $h$.

Lastly, since $w(\mathbf{T}) = \lambda(\mathbf{T}) = \infty$,
then by Theorem~\ref{thm:poly-iff} there is an increasing
sequence $(h_k)_{k\geq 1}$ such that
$h_k\rightarrow\infty$ as $k\rightarrow\infty$ and
$s(\mathbf{T}_{h_k}) > (h_k+1)^k$ for each $k\in\nats$.
By definition of $\ell(h)$ we have $\ell(h_k)\geq k$
for each $k\in\nats$. Since $\ell(h)$ is increasing in terms of
$h$ have have $\lim_{h\rightarrow\infty}\ell(h) = \infty$.
\end{proof}

We conclude this section with a few observations. Firstly, we note
that if $\mathbf{T}$ is an LFOR-tree (possibly with some f-vertices)
of finite width $w(\mathbf{T})$, then $\mathbf{T}$ can only
have finitely many leaves since any set of leaves forms an
antichain in $\mathbf{T}$.
It follows that $\mathbf{T}$ has only finitely many f-vertices and hence
there is an $h_1\in\nats$ such that $\mathbf{T}_{h_1}$ contains all the leaves
of $\mathbf{T}$. From the fact that $\mathbf{T}$ has only finitely many f-vertices, we also have the integer sequence $(\lambda(\mathbf{T}_h))_{h \ge 0}$ is eventually increasing. Thus, as defined in (\ref{eqn:lambda}), $\lambda(\mathbf{T})$ is also finite since any set of
vertices on the same level forms an antichain. In particular, there is an $h_2\geq h_1$  such that
$\lambda(\mathbf{T}) = \lambda(\mathbf{T}_h)$ for all $h\geq h_2$.
It follows that each vertex $u\in V_{h_2}(\mathbf{T})$ on level $h_2$
has exactly one parent and one child. In particular, for $h\geq h_2$ each
$\mathbf{T}(u)_{h-h_2}$
is therefore a simple path emanating downward from $u$ of length $h-h_2$
and so $s(\mathbf{T}(u)_{h-h_2}) = h - h_2 + 1$.
On the other hand, for each vertex
$u\in V_k(\mathbf{T})$ on level $k < h_2$ we have by 
Lemma~\ref{lmm:OR-subtrees} that
\begin{equation}
\label{eqn:lower-level}  
s(\mathbf{T}(u)_h) = \prod_{v\in C(u)}(1 + s(\mathbf{T}(v)_{h-1})),
\end{equation}
where each $v\in C(u)$ is on a strictly lower level than $u$ in $\mathbf{T}$,
keeping in mind that $s(\mathbf{T}(u)_h) = 1$ when $u$ is a leaf. 
By starting on level zero at the root $u = r$ with
$s(\mathbf{T}_h) = s(\mathbf{T}(r)_h)$ and replacing each instance
of $s(\mathbf{T}(u)_h)$ using (\ref{eqn:lower-level}), we
obtain a finite algebraic expression of $s(\mathbf{T}_h)$ in terms of the
$\lambda(\mathbf{T})$ expressions $s(\mathbf{T}(u)_{h-h_2})$ for
each vertex $u\in V_{h_2}(\mathbf{T})$ on level $h_2$.
In this way we obtain similarly to Observation~\ref{obs:s-poly} and
Theorem~\ref{thm:poly-iff} the following.

\begin{proposition}
\label{prp:s-poly-LFOR}
If $\mathbf{T}$ is an LFOR-tree with a finite width $w(\mathbf{T})\in\nats$,
then there is an $N\in\nats$ such that for all $h\geq N$
the number $s(\mathbf{T}_h)$ of OR-subtrees of height $h$ or less
is a polynomial in $h$ of degree $\lambda(\mathbf{T})$.
\end{proposition}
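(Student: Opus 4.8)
The plan is to build directly on the structural description of $\mathbf{T}$ obtained in the paragraph immediately preceding the statement: since $w(\mathbf{T})$ is finite, $\mathbf{T}$ has only finitely many leaves and hence only finitely many f-vertices, so there is a level $h_2$ beyond which $\mathbf{T}$ consists of the finite rooted tree $\mathbf{T}_{h_2}$ together with one pendant infinite path attached to each vertex of $V_{h_2}(\mathbf{T})$, and with $\lambda(\mathbf{T}_h)=\lambda(\mathbf{T})$ constant for $h\ge h_2$. Consequently, for $h\ge h_2$ the finite tree $\mathbf{T}_h$ is exactly $\mathbf{T}_{h_2}$ with a path of $h-h_2$ extra vertices hung below each vertex of $V_{h_2}(\mathbf{T})$, and I would compute $s(\mathbf{T}_h)$ by unwinding Lemma~\ref{lmm:OR-subtrees} from the root $r$ down to level $h_2$, exactly as in the derivation of (\ref{eqn:s-poly}) and (\ref{eqn:lower-level}).

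The core of the argument is the following claim, proved by induction on $h_2-\ell(u)$, i.e.\ bottom-up: \emph{for every $u\in V(\mathbf{T})$ with $\ell(u)\le h_2$ and every $h\ge h_2$, the quantity $s(\mathbf{T}(u)_{h-\ell(u)})$ is a polynomial in $h$ whose degree equals the number of vertices of $V_{h_2}(\mathbf{T})$ lying in the subtree $\mathbf{T}(u)$.} The base cases are the vertices at which the unwinding stops: if $u$ is a leaf of $\mathbf{T}$ then $s(\mathbf{T}(u)_{\bullet})=1$ is a nonzero constant (degree $0$, and $\mathbf{T}(u)$ contains no level-$h_2$ vertex); if $\ell(u)=h_2$ then $\mathbf{T}(u)_{h-h_2}$ is a path on $h-h_2+1$ vertices, so $s(\mathbf{T}(u)_{h-h_2})=h-h_2+1$ is a monic linear polynomial and $\mathbf{T}(u)$ meets $V_{h_2}(\mathbf{T})$ in the single vertex $u$. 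For the inductive step ($u$ not a leaf, $\ell(u)<h_2$), Lemma~\ref{lmm:OR-subtrees} gives $s(\mathbf{T}(u)_{h-\ell(u)})=\prod_{v\in C(u)}\bigl(1+s(\mathbf{T}(v)_{h-\ell(u)-1})\bigr)$; by the induction hypothesis each $s(\mathbf{T}(v)_{\bullet})$ is a polynomial in $h$ (shifting the height argument by a constant changes neither being a polynomial in $h$ nor the degree), so each factor, and hence the product, is a polynomial in $h$. For the degree, note that since $s(\mathbf{T}(v)_{\bullet})\ge 1$ always, adding $1$ never cancels the leading term, so $\deg\bigl(1+s(\mathbf{T}(v)_{\bullet})\bigr)=\deg s(\mathbf{T}(v)_{\bullet})$; the degree of the product is the sum of these degrees, which by the induction hypothesis is the total number of level-$h_2$ vertices among the subtrees $\mathbf{T}(v)$, $v\in C(u)$, that is, exactly the number of level-$h_2$ vertices in $\mathbf{T}(u)$.

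Applying the claim to $u=r$ yields that, for all $h\ge N:=h_2$, $s(\mathbf{T}_h)=s(\mathbf{T}(r)_{h})$ is a polynomial in $h$ of degree $|V_{h_2}(\mathbf{T})|$, the number of pendant infinite paths of $\mathbf{T}$; by the identification recorded just before the statement this number is $\lambda(\mathbf{T})$, completing the proof. (The same unwinding also shows that the leading coefficient is the product of the constant ``leaf factors'' picked up along the way, so it equals $1$ precisely when $\mathbf{T}$ has no f-vertices, in agreement with Observation~\ref{obs:s-poly}.)

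The main obstacle is purely the degree accounting through the ``apply $1+(\cdot)$, then take a product'' recursion of Lemma~\ref{lmm:OR-subtrees}: one must check carefully that (i) leaf subtrees contribute a constant, hence degree $0$, and therefore do not inflate the degree, and (ii) the $+1$ in each factor never lowers the degree, which is where the positivity $s(\mathbf{T}(v)_{\bullet})\ge 1$ is essential. A secondary, routine point is to confirm that $N=h_2$ is an admissible threshold, i.e.\ that for every $h\ge h_2$ each level-$h_2$ vertex already heads a pure path and $\lambda(\mathbf{T}_h)=\lambda(\mathbf{T})$ — this is precisely the structural fact imported from the preceding paragraph.
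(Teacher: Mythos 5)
Your proof follows the paper's argument essentially verbatim: the same reduction from finite width to finitely many leaves and finitely many f-vertices, the same stabilization level $h_2$ below which $\mathbf{T}$ is a union of pendant infinite paths with $s(\mathbf{T}(u)_{h-h_2})=h-h_2+1$ for $u\in V_{h_2}(\mathbf{T})$, and the same unwinding of Lemma~\ref{lmm:OR-subtrees} from the root down to level $h_2$; you merely make the degree bookkeeping explicit as a bottom-up induction, which the paper leaves implicit. One caveat you inherit directly from the paper rather than introduce yourself: the closing identification $|V_{h_2}(\mathbf{T})|=\lambda(\mathbf{T})$ holds only when $\mathbf{T}$ has no leaves, since with the literal definition of $\lambda$ via (\ref{eqn:lambda}) one gets $\lambda(\mathbf{T})=|V_{h_2}(\mathbf{T})|+l(\mathbf{T})$, so the degree your induction actually establishes is $w(\mathbf{T}^i)=\lambda(\mathbf{T})-l(\mathbf{T})$, in agreement with Theorem~\ref{thm:LFOR-thin-poly} rather than with the proposition's stated degree.
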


Unlike LFOR-i-trees the converse does not hold: there is an
LFOR-tree $\mathbf{T}$ with an infinite width $w(\mathbf{T})$
such that $s(\mathbf{T}_h)$ is bounded above by a fixed polynomial
in $h$.

\begin{center}
\begin{figure}

\begin{tikzpicture}[scale=.75]
 
\node [style={draw=black,circle,fill}, scale=.25, label=$u_0$] {}
     child {[fill] circle (2pt)
    }
    child { node [fill, circle, scale=.25, label=right:$u_1$]{} circle (2pt)
      child {[fill] circle (2pt)} 
      child {node [fill, circle, scale=.25, label=right:$u_2$]{} circle (2pt)
      child[missing]
      child {node [fill, circle, scale=.25, label=right:$u_3$]{} circle (2pt)
      child {[fill] circle (2pt)}
      child {node [fill, circle, scale=.25, label=right:$u_4$]{} circle (2pt)
      child[missing]
      child {node [fill, circle, scale=.25, label=right:$u_5$]{} circle (2pt) 
      child[missing]
      child {node [fill, circle, scale=.25, label=right:$u_6$]{} circle (2pt) 
      child[missing]
      child {node [fill, circle, scale=.25, label=right:$u_7$]{} circle (2pt) 
      child {[fill] circle (2pt)} 
      child {node [fill, circle, scale=.25, label=right:$u_8$]{} circle (2pt) 
      child[missing]
      child {node [fill, circle, scale=.25, label=right:$u_9$]{} circle (2pt) {node[below, rotate=25]{$\vdots$}}}}}}}}}}
    };
 
\end{tikzpicture}
\caption{ \label{counterex}}
\end{figure}
\end{center}

\begin{example}
\label{exa:log-leaves}  
Consider an LFOR-tree $\mathbf{T}$ formed by a downward simple
path from the root at level zero $r = u_0,u_1,\ldots$ where we add one leaf child on each level $2^i$ for $i\geq 0$ to its parent
$u_{2^i-1}$ (see Firgure~\ref{counterex}). For each $h\geq 0$ there are $2^{\lfloor\lg(h+1)\rfloor + 1}$
OR-subtrees of $\mathbf{T}$ that contain $u_h$ and not $u_{h+1}$. 
It follows that 
\[
s(\mathbf{T}_h)
\le \sum_{i=0}^h2^{\lfloor\lg(i+1)\rfloor+1}
< (h+1)\cdot 2^{\lfloor\lg(h+1)\rfloor+1}
\le
(h+1)\cdot 2^{\lg(h+1)+1}
= 2(h+1)^2,
\]
a polynomial in $h$ of degree $2$. (In fact, the first inequality is equality for $h \neq 2^i-1$.) 
Yet $\mathbf{T}$ has an infinite
antichain containing all the leaves on levels $2^i$ where $i\geq 0$.
Therefore $\mathbf{T}$ is an LFOR-tree with an infinite width
$w(\mathbf{T})$ where $s(\mathbf{T}_h)$ is bounded above by a fixed
polynomial $2(h+1)^2$.
\end{example}
In the following section we will discuss LFOR-trees more
systematically and both sharpen and generalize some results from
this section.

\section{General LFOR-trees}
\label{sec:LFOR-trees}

Every LFOR-tree $\mathbf{T}$ has a bipartition
of its vertices into the i-vertices and f-vertices. The root of
$\mathbf{T}$ is always an i-vertex since every vertex is a descendant
of the root and $\mathbf{T}$ is an infinite tree. Every i-vertex has an
i-vertex child and every child of an f-vertex is an f-vertex. Each parent
of an i-vertex is clearly also an i-vertex and therefore any two i-vertices
have a common ancestor that is also an i-vertex. This implies that
the subtree of $\mathbf{T}$ induced by all the i-vertices is connected. If the parent of an f-vertex $u$ is an i-vertex then all the descendants of
the $u$ are also f-vertices and $u$ generates a maximal finite
subtree $\mathbf{T}(u)$ all of which vertices are f-vertices.
\begin{definition}
\label{def:i-tree-f-trees}  
Let $\mathbf{T}$ be an LFOR-tree.

(i) The {\em i-subtree} $\mathbf{T}^i$ of $\mathbf{T}$ is the
unique subtree induced by all the i-vertices of $\mathbf{T}$. 

(ii) A subtree of $\mathbf{T}$ all of which vertices are f-vertices
is an {\em f-subtree}.
\end{definition}
The following observations easily verified.
\begin{observation}
\label{obs:i-f-subtrees}
Let $\mathbf{T}$ be an LFOR-tree.  

(i) The i-subtree $\mathbf{T}^i$ of an LFOR-tree $\mathbf{T}$
is and LFOR-i-tree with the same root as $\mathbf{T}$.

(ii) Every f-subtree of $\mathbf{T}$ is finite and is a subtree of
a unique maximal f-subtree induced by a maximal f-vertex in the natural
partial ordering of $\mathbf{T}$ and its descendants.

(iii) There is a unique countable disjoint collection of maximal
f-subtrees of $\mathbf{T}$ rooted at the maximal f-vertices of $\mathbf{T}$.
These roots are the f-vertices with an i-vertex as a parent.
\end{observation}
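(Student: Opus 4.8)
The plan is to verify the three parts in order, leaning entirely on the structural facts already recorded in the paragraph preceding the statement: the root $r$ is an i-vertex, every i-vertex has an i-vertex child, every child of an f-vertex is an f-vertex, every parent (hence every ancestor) of an i-vertex is an i-vertex, and the subtree induced by all i-vertices is connected. Nothing beyond careful bookkeeping with these closure properties is needed, which is why the observation is stated without proof in the paper.

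For (i), I would first note that $\mathbf{T}^i$ is connected and contains $r$, so it is a rooted subtree of $\mathbf{T}$ with the same root $r$, and the left-to-right orders on the levels of $\mathbf{T}$ restrict to it, making it an OR-tree. Local finiteness is immediate, since the vertices of $\mathbf{T}^i$ on levels $\leq h$ form a subset of $V(\mathbf{T}_h)$, which is finite, and this truncation is connected because the parent of an i-vertex is again an i-vertex. The one point that deserves a sentence is that every vertex of $\mathbf{T}^i$ is still an i-vertex relative to $\mathbf{T}^i$: starting from an i-vertex $u$ and repeatedly passing to an i-vertex child yields an infinite descending path of i-vertices, so $u$ has infinitely many descendants inside $\mathbf{T}^i$; applied to $u=r$ this also shows $\mathbf{T}^i$ is infinite. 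Hence $\mathbf{T}^i$ is an LFOR-i-tree.

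For (ii), let $S$ be an f-subtree and let $\rho$ be its root, i.e.\ the vertex of $S$ closest to $r$. I would first argue that every vertex of $S$ is a descendant of $\rho$ in $\mathbf{T}$: for $v\in S$ the unique $\rho$–$v$ path lies in $S$, its topmost vertex is $\lca(\rho,v)$, which lies in $S$ and has level $\leq\ell(\rho)$, and by minimality of $\ell(\rho)$ it must equal $\rho$, so $\rho$ is an ancestor of $v$. Since $\rho$ is an f-vertex, $\mathbf{T}(\rho)$ is finite, hence $S$ is finite. Next, walk up the ancestor chain of $\rho$; because $r$ is an i-vertex and i-vertices are closed upward, the f-vertices among the ancestors of $\rho$ form an initial segment of that chain, and its top element $w$ is an f-vertex whose parent is an i-vertex, i.e.\ a maximal f-vertex in the natural partial order. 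Then $S\subseteq\mathbf{T}(\rho)\subseteq\mathbf{T}(w)$, and $w$ is uniquely determined by $\rho$ as the highest f-vertex ancestor of $\rho$.

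For (iii), the collection in question is $\{\mathbf{T}(w):w\text{ a maximal f-vertex}\}$, which is countable because $V(\mathbf{T})$ is. Two distinct maximal f-vertices are incomparable (if one were an ancestor of the other, that ancestor would be an f-vertex strictly above another f-vertex, contradicting maximality of the lower one), so the corresponding subtrees are pairwise vertex-disjoint; and by the argument in (ii) applied with $S$ a single f-vertex, every f-vertex lies in exactly one of them, so the collection partitions the f-vertices. Each $\mathbf{T}(w)$ is indeed a maximal f-subtree — all its vertices are f-vertices by downward closure, and it cannot be enlarged without adjoining the i-vertex parent of $w$ — while conversely every maximal f-subtree equals some $\mathbf{T}(w)$ by combining (ii) with its own maximality; since the set of maximal f-vertices is determined by $\mathbf{T}$, so is the collection, giving uniqueness. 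The last assertion, that the roots are exactly the f-vertices having an i-vertex parent, is just the definition of "maximal f-vertex" unwound. I expect the only mildly fussy steps to be the claim in (ii) that $\rho$ is an ancestor of every vertex of $S$ and the identification of maximal f-vertices with f-vertices whose parent is an i-vertex; everything else is routine manipulation of the i/f dichotomy.
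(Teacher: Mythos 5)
Your proof is correct and follows exactly the route the paper intends: the paper states this observation without proof ("easily verified"), relying on the closure properties of i- and f-vertices recorded in the paragraph immediately preceding Definition~\ref{def:i-tree-f-trees}, and your verification is precisely a careful unwinding of those facts, including the two genuinely non-trivial points (that every vertex of $\mathbf{T}^i$ remains an i-vertex relative to $\mathbf{T}^i$, and that the root of an f-subtree is an ancestor of all its vertices). No gaps.
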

Denoting the set of all maximal f-vertices of $\mathbf{T}$ by
$F(\mathbf{T})$ we then have by Observation~\ref{obs:i-f-subtrees} 
a unique partition of $\mathbf{T}$ into its i-subtree and maximal f-subtrees
\begin{equation}
\label{eqn:LFOR-part}  
\mathbf{T} = \mathbf{T}^i\cup\bigcup_{u\in F(\mathbf{T})}\mathbf{T}(u).
\end{equation}
In the above partition all the i-vertices of $\mathbf{T}$ are in
$\mathbf{T}^i$ and each f-vertex is contained in exactly one maximal
f-subtree $\mathbf{T}(u)$ for some maximal f-vertex $u\in F(\mathbf{T})$. By
(\ref{eqn:LFOR-part}), Observation~\ref{obs:w=l} and
Corollary~\ref{cor:cup-w} we therefore obtain a minimal
decomposition of $\mathbf{T}$ into infinite $C_*^{\infty}$
and finite $C_*^f$  chains as
\begin{equation}
\label{eqn:LFOR-chains}  
\mathbf{T} =
\left(\bigcup_{\alpha=1}^{w(\mathbf{T}^i)}C^{\infty}_{\alpha}\right)
\cup
\left(\bigcup_{u\in F(\mathbf{T})}\bigcup_{\beta=1}^{l_u}C^f_{u;\beta}\right),
\end{equation}
where $l_u = |L(\mathbf{T}(u))|$ denote the number of
leaves of the f-subtree $\mathbf{T}(u)$. The Dilworth-like
decomposition of $\mathbf{T}$
in (\ref{eqn:LFOR-chains}) is minimal in the sense that there is no such
decomposition with fewer than $w(\mathbf{T}^i)$ infinite chains or
fewer than $l(\mathbf{T}) = \sum_{u\in F(\mathbf{u})}l_u$ finite chains.
If either $l(\mathbf{T})$ or $w(\mathbf{T}^i)$ is infinite, then by
Corollary~\ref{cor:cup-w} $\mathbf{T}$ contains an infinite antichain.
If both $l(\mathbf{T})$ and $w(\mathbf{T}^i)$ are finite, so $\mathbf{T}$
has only finitely many leaves, then there is an $h_1\in\nats$ such that
all the leaves $L(\mathbf{T})$ of $\mathbf{T}$ are on levels
strictly less than $h_1$.
Also, since $w(\mathbf{T}^i) = \lambda(\mathbf{T}^i)$ as in
(\ref{eqn:lambda}) there is an $h_2 > h_1$ such that
$\lambda(\mathbf{T}^i) = \lambda(\mathbf{T}^i_{h_2})$ in which case
$V_{h_2}(\mathbf{T}^i)$ is an antichain in $\mathbf{T}$. By our choice of
$h_2$ no leaves are descendants of vertices of $V_{h_2}(\mathbf{T}^i)$ and
so the disjoint union $V_{h_2}(\mathbf{T}^i)\cup L(\mathbf{T})$ forms
an antichain in $\mathbf{T}$. In every case we have the following.
\begin{claim}
\label{clm:LFOR-w}  
For an LFOR-tree $\mathbf{T}$ we have that
$w(\mathbf{T}) = w(\mathbf{T}^i) + l(\mathbf{T})$.
\end{claim}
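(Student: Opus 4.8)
The plan is to prove the two inequalities $w(\mathbf{T}) \le w(\mathbf{T}^i) + l(\mathbf{T})$ and $w(\mathbf{T}) \ge w(\mathbf{T}^i) + l(\mathbf{T})$, using the usual convention that $\infty + x = \infty$. The upper bound is the easy direction of a Dilworth-type statement and requires no finiteness hypothesis: given the minimal decomposition in~(\ref{eqn:LFOR-chains}) of $\mathbf{T}$ into the $w(\mathbf{T}^i)$ infinite chains $C^{\infty}_{\alpha}$ and the $l(\mathbf{T}) = \sum_{u\in F(\mathbf{T})} l_u$ finite chains $C^f_{u;\beta}$, any antichain $A$ of $\mathbf{T}$ meets each chain in at most one vertex, so $|A|$ is at most the total number of chains, namely $w(\mathbf{T}^i) + l(\mathbf{T})$. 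When either summand is infinite this bound is vacuous.

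For the lower bound I would split into cases according to whether $w(\mathbf{T}^i)$ and $l(\mathbf{T})$ are finite. If $w(\mathbf{T}^i) = \infty$, then by Corollary~\ref{cor:cup-w} the LFOR-i-tree $\mathbf{T}^i$ contains an infinite antichain; since comparability of two i-vertices in $\mathbf{T}$ agrees with comparability in $\mathbf{T}^i$ (the unique path of $\mathbf{T}$ joining two i-vertices runs only through i-vertices, as any ancestor of an i-vertex is again an i-vertex), this is an infinite antichain of $\mathbf{T}$, and both sides equal $\infty$. If $l(\mathbf{T}) = \infty$, then since each $l_u$ is finite ($\mathbf{T}(u)$ being finite by Observation~\ref{obs:i-f-subtrees}(ii)) there are infinitely many maximal f-vertices, hence infinitely many leaves of $\mathbf{T}$, and any set of leaves forms an antichain, so again $w(\mathbf{T}) = \infty$.

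It remains to treat the case where both $w(\mathbf{T}^i)$ and $l(\mathbf{T})$ are finite, following the construction sketched just before the statement. Then $\mathbf{T}$ has only finitely many leaves, so fix $h_1\in\nats$ with all leaves in $L(\mathbf{T})$ on levels strictly below $h_1$, and then, using $w(\mathbf{T}^i) = \lambda(\mathbf{T}^i)$ from Corollary~\ref{cor:cup-w} together with Observation~\ref{obs:i-tree} applied to $\mathbf{T}^i$, fix $h_2 > h_1$ with $|V_{h_2}(\mathbf{T}^i)| = \lambda(\mathbf{T}^i_{h_2}) = \lambda(\mathbf{T}^i) = w(\mathbf{T}^i)$. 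The set $V_{h_2}(\mathbf{T}^i)$ is an antichain (same-level vertices are pairwise incomparable) and is disjoint from $L(\mathbf{T})$ (i-vertices versus f-vertices, or simply because those leaves lie below level $h_2$); moreover no leaf of $\mathbf{T}$ is a descendant of a level-$h_2$ vertex, and a leaf has no descendants, so $V_{h_2}(\mathbf{T}^i)\cup L(\mathbf{T})$ is an antichain of $\mathbf{T}$. Since the maximal f-subtrees partition the f-vertices and the leaves of $\mathbf{T}(u)$ are exactly the leaves of $\mathbf{T}$ lying in $\mathbf{T}(u)$, we get $|L(\mathbf{T})| = \sum_{u\in F(\mathbf{T})} l_u = l(\mathbf{T})$, so this antichain has cardinality $w(\mathbf{T}^i) + l(\mathbf{T})$, yielding the lower bound.

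I expect no serious obstacle: the conceptual core, that a minimal chain cover and a maximum antichain have matching size, is bypassed in the hard direction by exhibiting explicit antichains, and the rest is bookkeeping. The only point needing care is the ``both finite'' case, where one must check simultaneously that $V_{h_2}(\mathbf{T}^i)\cup L(\mathbf{T})$ is genuinely an antichain, that its two pieces are disjoint, and that $|L(\mathbf{T})| = l(\mathbf{T})$; a single suitable choice of $h_2$ makes all three transparent.
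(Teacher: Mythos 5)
Your proof is correct and follows essentially the same route as the paper: the upper bound from the chain decomposition in (\ref{eqn:LFOR-chains}), infinite antichains when either summand is infinite, and the explicit antichain $V_{h_2}(\mathbf{T}^i)\cup L(\mathbf{T})$ when both are finite. You merely spell out a few details (e.g.\ the leaf-counting argument when $l(\mathbf{T})=\infty$) that the paper leaves implicit.
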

\begin{example}
\label{exa:2-caterp}
Consider a version of a previous Example~\ref{exa:k-caterp} where $k=2$ and
let $\mathbf{T}$ be an LFOR-tree formed by the infinite path emanating
from the root downward and where we have added one leaf on each level
$\geq 1$. Since $\mathbf{T}$ has infinitely many leaves
$l(\mathbf{T}) = \infty$, its width $w(\mathbf{T}) = \infty$ as well.
Hence, since $w(\mathbf{T}^i) = 1$, we have
$w(\mathbf{T}) = w(\mathbf{T}^i) + l(\mathbf{T})$. However, no
infinite antichain of $\mathbf{T}$ can contain a vertex from
$\mathbf{T}^i$ which is the unique infinite path in $\mathbf{T}$
emanating downward from the root, since any vertex on this path
is incomparable to only finitely many leaves in $\mathbf{T}$.
Consequentially, an antichain of cardinality $w(\mathbf{T})$
does not need to contain any vertices from $\mathbf{T}^i$.
\end{example}
\vspace{3 mm}

{\sc Remark:} Claim~\ref{clm:LFOR-w} is for a general LFOR-tree $\mathbf{T}$
solely a cardinality result.
Only when both $w(\mathbf{T}^i)$ and $l(\mathbf{T})$
are finite can we obtain an explicit antichain in $\mathbf{T}$ that
contains $w(\mathbf{T}^i)$ vertices from the i-subtree $\mathbf{T}^i$
and the $l(\mathbf{T})$ leaves of $\mathbf{T}$.
\begin{definition}
\label{def:fat-thin}
Let $\mathbf{T}$ be an LFOR-tree.

(i) We say that $\mathbf{T}$ is {\em thin} if
$w(\mathbf{T}^i) = \lambda(\mathbf{T}^i)\in\nats$ is finite.

(ii) We say that $\mathbf{T}$ is {\em fat} if
$w(\mathbf{T}^i) = \lambda(\mathbf{T}^i) = \infty$ is infinite.
\end{definition}

\begin{figure}
\begin{center}
\begin{tabular}{ccc}

\begin{tikzpicture}

\node[draw=none, fill=none, label=below:$\vdots$] at (-1, -6)   (a) {};
\node[draw=none, fill=none, label=below:$\vdots$] at (1, -6)   (b) {};

\draw[thick] (-1,-3) -- (0,0) -- (1,-3)
node[pos=0,style={draw=black,circle,fill}, scale=.25,fill,label=above:$r$]{};
\draw[thick] (-1,-3) -- (-1,-6);
\draw[thick] (1,-3) -- (1,-6);
\end{tikzpicture} & \hspace{50pt} & 
\begin{tikzpicture}
\node[draw=none, fill=none, label=below left:\rotatebox{160}{$\vdots$}] at (-1.78, -6)   (a) {};
\node[draw=none, fill=none, label=below right:\rotatebox{198}{$\vdots$}] at (1.78, -6)   (b) {};

\draw[thick] (-2,-6) -- (0,0) -- (2,-6)
node[pos=0,style={draw=black,circle,fill}, scale=.25,fill,label=above:$r$]{};

\end{tikzpicture} 

\end{tabular}

\caption{A thin LFOR-tree on the left and a fat LFOR-tree on the right.}\label{thinfattrees}

\end{center}
\end{figure}
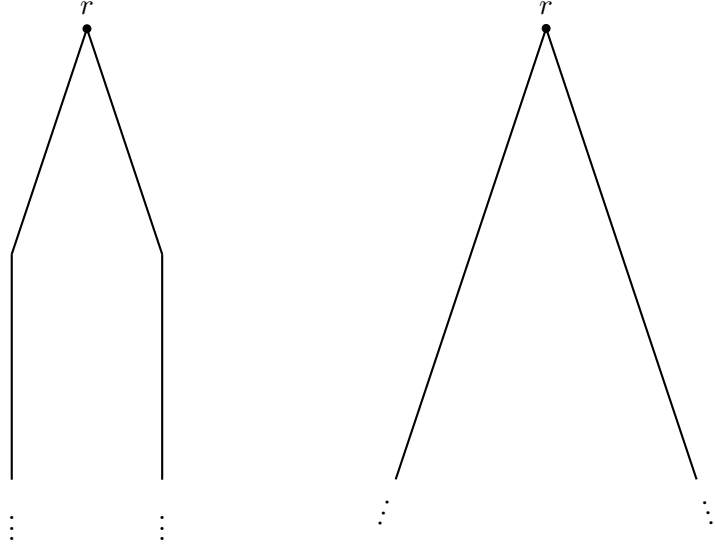

Examples of thin and fat LFOR-trees are pictured in Figure~\ref{thinfattrees}. Since $\mathbf{T}^i\subseteq \mathbf{T}$ is a subtree we have for
any $h\in\nats$ that $s(\mathbf{T}_h) \geq s(\mathbf{T}^i_h)$ and
hence by Theorem~\ref{thm:poly-iff} the following.
\begin{proposition}
\label{prp:fat-not-poly}  
If $\mathbf{T}$ is a fat LFOR-tree, then $s(\mathbf{T}_h)$ is not bounded
from above by any fixed polynomial in $h$.
\end{proposition}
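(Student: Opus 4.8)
The plan is to deduce this immediately from Theorem~\ref{thm:poly-iff} applied to the i-subtree. First I would recall that, by Definition~\ref{def:fat-thin}, $\mathbf{T}$ being fat means exactly that $w(\mathbf{T}^i) = \lambda(\mathbf{T}^i) = \infty$, and that by Observation~\ref{obs:i-f-subtrees}(i) the i-subtree $\mathbf{T}^i$ is an LFOR-i-tree sharing the root $r$ of $\mathbf{T}$. Since $\mathbf{T}^i$ has infinite width, condition (1) of Theorem~\ref{thm:poly-iff} fails for $\mathbf{T}^i$, hence so does the equivalent condition (3): the function $s(\mathbf{T}^i_h)$ is \emph{not} bounded from above by any fixed polynomial in $h$ as $h\to\infty$.

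Next I would make the comparison $s(\mathbf{T}_h) \ge s(\mathbf{T}^i_h)$ precise for every $h\in\nats$. Because $\mathbf{T}^i$ and $\mathbf{T}$ have the common root $r$, the truncation $\mathbf{T}^i_h$ (the subtree of $\mathbf{T}^i$ induced by the vertices on levels $\le h$) is a subtree of $\mathbf{T}_h$ containing $r$; consequently every OR-subtree of $\mathbf{T}^i_h$ is also an OR-subtree of $\mathbf{T}_h$, so $S(\mathbf{T}^i_h)\subseteq S(\mathbf{T}_h)$ and therefore $s(\mathbf{T}^i_h)\le s(\mathbf{T}_h)$. (Alternatively this monotonicity can be read directly off the product recursion of Lemma~\ref{lmm:OR-subtrees}, since attaching the maximal f-subtrees of $\mathbf{T}$ onto $\mathbf{T}^i$ only enlarges the relevant products.) Combining the two facts, $s(\mathbf{T}_h)\ge s(\mathbf{T}^i_h)$ and $s(\mathbf{T}^i_h)$ not polynomially bounded, yields that $s(\mathbf{T}_h)$ is not polynomially bounded either, which is the claim.

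There is essentially no obstacle here; the statement is a direct corollary of Theorem~\ref{thm:poly-iff}. The only point meriting a sentence of care is the monotonicity $s(\mathbf{T}^i_h)\le s(\mathbf{T}_h)$, i.e. that passing to the subtree $\mathbf{T}^i$ cannot increase the count of OR-subtrees, which I would justify as above via the shared root and the subtree inclusion $\mathbf{T}^i_h\subseteq\mathbf{T}_h$.
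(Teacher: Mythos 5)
Your proposal is correct and follows exactly the paper's own route: the paper derives the proposition from the observation that $\mathbf{T}^i\subseteq\mathbf{T}$ gives $s(\mathbf{T}_h)\geq s(\mathbf{T}^i_h)$ and then applies Theorem~\ref{thm:poly-iff} to the LFOR-i-tree $\mathbf{T}^i$ of infinite width. Your extra care in justifying the monotonicity $s(\mathbf{T}^i_h)\leq s(\mathbf{T}_h)$ via the shared root is a reasonable elaboration of a step the paper leaves implicit.
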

\begin{example}
\label{exa:2-caterp-exp}
Consider the LFOR-tree $\mathbf{T}$ from previous
Example~\ref{exa:2-caterp} formed by the infinite path emanating
from the root downward and where
we have added one leaf on each level $\geq 1$. In this case
$\mathbf{T}$ is a thin tree with $w(\mathbf{T}^i) = 1$ and it has
infinitely many leaves. By Lemma~\ref{lmm:OR-subtrees}
$s(\mathbf{T}_h)$ satisfies the recursion $s(\mathbf{T}_0) = 1$ and
$s(\mathbf{T}_h) = 2(1 + s(\mathbf{T}_{h-1}))$ and so
$s(\mathbf{T}_h) = 3\cdot 2^h - 2$ which is exponential in $h$.
So, clearly, thinness is not sufficient to guarantee for $s(\mathbf{T}_h)$
to grow polynomially in $h$.
\end{example}

Our next objective is to prove the following theorem.
\begin{theorem}
\label{thm:LFOR-thin-poly}
Let $\mathbf{T}$ be an LFOR-tree with a finite width $w(\mathbf{T})\in\nats$
as a poset. Then $\mathbf{T}$ is thin and has finitely many maximal f-vertices,
say $u_1,\ldots,u_f$. Moreover $s(\mathbf{T}_h)$ is a polynomial
in $h$ of degree $\lambda(\mathbf{T}) = w(\mathbf{T}^i)$
and with a leading coefficient of $\prod_{j=1}^f(1 + s(\mathbf{T}(u_j)))$
for all sufficiently large $h\in\nats$. 
\end{theorem}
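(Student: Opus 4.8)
The plan is to read off the two structural claims from Claim~\ref{clm:LFOR-w} and the partition~\eqref{eqn:LFOR-part}, and then to obtain the polynomial together with its leading coefficient by expanding $s(\mathbf{T}_h)$ recursively via Lemma~\ref{lmm:OR-subtrees}, working from the lowest levels upward. Since $w(\mathbf{T})\in\nats$, Claim~\ref{clm:LFOR-w} gives $w(\mathbf{T}^i)+l(\mathbf{T})=w(\mathbf{T})<\infty$, so both summands are finite: finiteness of $w(\mathbf{T}^i)$ says precisely that $\mathbf{T}$ is thin, while $l(\mathbf{T})=\sum_{u\in F(\mathbf{T})}l_u$ with each $l_u\geq 1$ forces the set $F(\mathbf{T})=\{u_1,\ldots,u_f\}$ of maximal f-vertices to be finite. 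As in the discussion preceding Proposition~\ref{prp:s-poly-LFOR}, $\mathbf{T}$ then has only finitely many leaves, hence only finitely many f-vertices, and there is $h_1\in\nats$ with every f-vertex (in particular every leaf) on a level below $h_1$; thus every vertex on a level at least $h_1$ is an i-vertex all of whose children are i-vertices, the per-level count $|V_h(\mathbf{T})|$ is non-decreasing for $h\geq h_1$ and bounded by $w(\mathbf{T})$, and so it stabilizes at the value $w(\mathbf{T}^i)$ from some level $h_2\geq h_1$ on; in particular every vertex on a level at least $h_2$ has exactly one child and therefore roots an infinite path.

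Call a child of a vertex an \emph{i-child} or an \emph{f-child} according to whether it is an i-vertex or an f-vertex, and note that every f-child of an i-vertex is itself a maximal f-vertex. For an i-vertex $u$ on level $k$, let $F_u\subseteq F(\mathbf{T})$ be the set of maximal f-vertices descended from $u$ and let $d_u$ be the eventual per-level count of $\mathbf{T}(u)$ (equivalently, the number of leaves of the i-subtree of $\mathbf{T}(u)$), so that $d_r=w(\mathbf{T}^i)$ and $F_r=\{u_1,\ldots,u_f\}$. I would prove, by downward induction on $k$, that for all sufficiently large $h$ the number $s(\mathbf{T}(u)_{h-k})$ is a polynomial in $h$ of degree $d_u$ with leading coefficient $\prod_{v\in F_u}(1+s(\mathbf{T}(v)))$. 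The base case is $k\geq h_2$: there $u$ roots an infinite path, so $s(\mathbf{T}(u)_{h-k})=h-k+1$, of degree $d_u=1$ and leading coefficient $1$ (the empty product). For $k<h_2$, Lemma~\ref{lmm:OR-subtrees} gives $s(\mathbf{T}(u)_{h-k})=\prod_{v\in C(u)}(1+s(\mathbf{T}(v)_{h-k-1}))$; by the inductive hypothesis each i-child $v$ contributes a polynomial factor of degree $d_v\geq 1$ with leading coefficient $\prod_{w\in F_v}(1+s(\mathbf{T}(w)))$, while each f-child $v$, being a maximal f-vertex whose subtree is finite, contributes for large $h$ the constant factor $1+s(\mathbf{T}(v))$. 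Multiplying, the resulting degree is $\sum_{v}d_v=d_u$ where $v$ ranges over the i-children (the leaves of the i-subtree of $\mathbf{T}(u)$ split among the i-subtrees rooted at the i-children), and the leading coefficient is the product of the factors $1+s(\mathbf{T}(v))$ over the disjoint union of the sets $F_v$ for the i-children $v$ together with the f-children of $u$ --- which is exactly $F_u$. Taking $u=r$ yields the stated polynomial, of degree $w(\mathbf{T}^i)$ and with leading coefficient $\prod_{j=1}^f(1+s(\mathbf{T}(u_j)))$, for all sufficiently large $h$.

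The structural part is routine given Claim~\ref{clm:LFOR-w} and the earlier discussion, so the one place that needs real care is the bookkeeping in the inductive step: one must verify that, below a fixed i-vertex $u$, the degree-carrying leaves of the i-subtree and the constant-carrying maximal f-vertices are each split among the children of $u$ with no overlap and nothing omitted. This uses that an f-child of an i-vertex is automatically a maximal f-vertex, and that any maximal f-vertex lying strictly below $u$ descends from a \emph{unique} i-child of $u$ (a descendant of an f-child would have an f-vertex as parent, contradicting maximality). One must also check that the phrase ``for all sufficiently large $h$'' can be made uniform in $u$, which it can: only finitely many i-vertices lie below level $h_2$ and only finitely many maximal f-subtrees occur, each of finite height, so a single threshold $N$ works for every $u$, and the bottom-up expansion then terminates at $r$ with a single polynomial identity valid for all $h\geq N$.
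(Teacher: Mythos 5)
Your proposal is correct, and the structural part (thinness and finiteness of $F(\mathbf{T})$ via Claim~\ref{clm:LFOR-w}, the levels $h_1<h_2$, and the stabilization of the per-level count at $w(\mathbf{T}^i})$) matches the paper's almost verbatim. Where you genuinely diverge is in how the polynomial and its leading coefficient are extracted. The paper works top-down and globally: it fixes $N>h_2$, partitions $S(\mathbf{T}_h)$ according to the trace $B=V_N(T)\cap V_N(\mathbf{T})$ of an OR-subtree $T$ on level $N$, observes that $s_B(\mathbf{T}_h)=s_B(\mathbf{T}_N)(h-N+1)^{|B|}$ because below level $N$ every vertex roots a bare path, and sums over $B$ to get the entire polynomial $\sum_b\bigl(\sum_{|B|=b}s_B(\mathbf{T}_N)\bigr)(h-N+1)^b$ at once; the leading coefficient is then the single term $B=V_N(\mathbf{T})$, identified with $\prod_j(1+s(\mathbf{T}(u_j)))$ because a subtree containing all of $V_N(\mathbf{T})$ must contain every i-vertex up to level $N$ and is free only in its choice of (possibly empty) OR-subtree of each maximal f-subtree. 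You instead run a bottom-up (downward-on-level) induction through the product recursion of Lemma~\ref{lmm:OR-subtrees}, tracking only the degree $d_u$ and the leading coefficient $\prod_{v\in F_u}(1+s(\mathbf{T}(v)))$ at each i-vertex. Both are sound; your bookkeeping of the disjoint splitting of $F_u$ among i-children and f-children, and the uniformity of the threshold $N$, are exactly the points that need care and you handle them correctly. The trade-off is that the paper's partition yields an explicit combinatorial interpretation of \emph{every} coefficient of the polynomial (as counts of OR-subtrees of the fixed finite tree $\mathbf{T}_N$ with prescribed trace size), whereas your recursion is more local, reuses the machinery already set up for Observation~\ref{obs:s-poly} and (\ref{eqn:s-poly}), but surrenders the lower-order coefficients.
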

\begin{proof}
Let $\mathbf{T}$ be an LFOR-tree with a finite width $w(\mathbf{T})$.  
By Claim~\ref{clm:LFOR-w} $w(\mathbf{T}^i)$ is finite, $\mathbf{T}$
has finitely many leaves, and hence
$\lambda(\mathbf{T}) = \lambda(\mathbf{T}^i) = w(\mathbf{T}^i)$.
As in the discussion right before
Claim~\ref{clm:LFOR-w} there are $h_1$ and $h_2$ such that 
(i) all the leaves of $\mathbf{T}$ are on levels strictly less
than $h_1$, and
(ii) $h_2 > h_1$ where $\lambda(\mathbf{T}^i) = \lambda(\mathbf{T}^i_{h_2})$.
If $N > h_1,h_2$ is fixed and $h\geq N$, then we can partition the
OR-subtrees $S(\mathbf{T}_h)$ of $\mathbf{T}_h$ into
$2^{\lambda(\mathbf{T})}$ classes, one class $S_B(\mathbf{T}_h)$ for each subset
$B\subseteq V_N(\mathbf{T})$ of vertices on level $N$ in $\mathbf{T}$
containing the OR-subtrees $T\subseteq \mathbf{T}_h$ with
$V_N(T)\cap V_N(\mathbf{T}) = B$. As this partition is disjoint we have
\begin{equation}
\label{eqn:set-part}  
s(\mathbf{T}_h) = \sum_{B\subseteq V_N(\mathbf{T})}s_B(\mathbf{T}_h),
\end{equation}
where $s_B(\mathbf{T}_h) = |S_B(\mathbf{T}_h)|$ for each subset $B$.
We note a few things:

(i) For each $B\subseteq V_N(\mathbf{T})$ and each $h\geq N$ each
OR-subtree $T\in S_B(\mathbf{T}_h)$ is obtained from an OR-subtree
$T'\in S_B(\mathbf{T}_N)$ by appending to $T'$ paths of length at most $h-N$
downward from each vertex in $B$. Therefore we have
$s_B(\mathbf{T}_h) = s_B(\mathbf{T}_N)(h - N + 1)^{|B|}$. By
(\ref{eqn:set-part}) we therefore have
\[
s(\mathbf{T}_h)
= \sum_{B\subseteq V_N(\mathbf{T})}s_B(\mathbf{T}_N)(h - N + 1)^{|B|},
\]
a polynomial in $h$ of degree $\lambda(\mathbf{T})$.
Since $b = |B| \in \{0,1,\ldots,\lambda(\mathbf{T})\}$, we can further
write the above sum as the following
\begin{equation}
  \label{eqn:deg-part}
s(\mathbf{T}_h)
= \sum_{b=0}^{\lambda(\mathbf{T})}\left(
\sum_{\substack{B\subseteq V_N(\mathbf{T}) \\ |B| = b}}
s_B(\mathbf{T}_N)\right)(h - N + 1)^b.
\end{equation}

(ii) The sum for $B = \emptyset$ in (\ref{eqn:deg-part}) 
is just one term
$s_{\emptyset}(\mathbf{T}_h) = s(\mathbf{T}_N)$ for every $h\geq N$,
and hence it contributes only to the constant term of $s(\mathbf{T}_h)$
as a polynomial in $h$.

(iii) Likewise the sum for $B = V_N(\mathbf{T})$ in (\ref{eqn:deg-part})
is just one term
\[
s_{V_N(\mathbf{T})}(\mathbf{T}_h)
= s_{V_N(\mathbf{T})}(\mathbf{T}_N)(h - N + 1)^{\lambda(\mathbf{T})}
\]
that determines the leading coefficient of $s(\mathbf{T}_h)$.
We note that each OR-subtree in $S_{V_N(\mathbf{T})}(\mathbf{T}_N)$
contains all the i-vertices of $\mathbf{T}$ on levels $0,1,\ldots,N$
and therefore the number $s_{V_N(\mathbf{T})}(\mathbf{T}_N)$ is then
completely determined by all the OR-f-subtrees of $\mathbf{T}_N$
rooted at the maximal f-vertices $u_1,\ldots,u_f$ w.r.t.~the
natural partial order of $\mathbf{T}$. Since each $\mathbf{T}(u_j)$
has $s(\mathbf{T}(u_j))$ OR-subtrees (each rooted at $u_i$) there
are $1 + s(\mathbf{T}(u_j))$ OR-subtrees when we include the empty
OR-subtree. Therefore
$s_{V_N(\mathbf{T})}(\mathbf{T}_N) = \prod_{j=1}^f(1 + s(\mathbf{T}(u_j)))$
and so (\ref{eqn:deg-part}) becomes
\[
s(\mathbf{T}_h) =
\prod_{j=1}^f(1 + s(\mathbf{T}(u_j)))(h - N + 1)^{\lambda(\mathbf{T})} + 
\sum_{b=1}^{\lambda(\mathbf{T})-1}\left(
\sum_{\substack{B\subseteq V_N(\mathbf{T}) \\ |B| = b}}
s_B(\mathbf{T}_N)\right)(h - N + 1)^b
+ s(\mathbf{T}_N),
\]
which completes the proof.
\end{proof}  
As with LFOR-i-trees of finite width there is an $N\in\nats$ (here any
$N\geq h_2$ from the above proof will do) such that all leaves are on
levels strictly less than $N$ and
$w = w(\mathbf{T}^i) = \lambda(\mathbf{T}^i) = \lambda(\mathbf{T}^i_h)$
for all $h\geq N$. Let $M = |V(\mathbf{T}_N)|$ be the number of vertices of
$\mathbf{T}_N$. In this case, $\mathbf{T}_h$ has exactly $n_h = M + w(h-N)$
vertices for each $h\geq N$. As the height $h$ can be written in terms of
the number $n_h$ of vertices of $\mathbf{T}_h$ we have
by the above Theorem~\ref{thm:LFOR-thin-poly} the following.
\begin{corollary}
\label{cor:poly-vert}
Let $\mathbf{T}$ be an LFOR-tree of finite width as a poset and
therefore with finitely many maximal f-vertices, say $u_1,\ldots,u_f$.
In this case the number $s(\mathbf{T}_h)$ of OR-subtrees of $\mathbf{T}$  
is a polynomial in terms of $n_h = |V(\mathbf{T}_h)|$, the number of
vertices of $\mathbf{T}_h$, for all sufficiently large enough $h$
of degree $w = w(\mathbf{T}^i)$ with leading coefficient
of $\frac{1}{w^w}\prod_{j=1}^f(1 + s(\mathbf{T}(u_j)))$.
\end{corollary}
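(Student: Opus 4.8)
The plan is to obtain this as an immediate consequence of Theorem~\ref{thm:LFOR-thin-poly} by a linear change of variable, rewriting the polynomial in $h$ as a polynomial in $n_h$; this is essentially the computation sketched in the paragraph preceding the statement, and the task is just to carry out the substitution and track the degree and leading coefficient.

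First I would fix $N\in\nats$ large enough that all leaves of $\mathbf{T}$ lie on levels strictly below $N$ and that $\lambda(\mathbf{T}^i)=\lambda(\mathbf{T}^i_h)$ for every $h\ge N$; such an $N$ exists because $w(\mathbf{T})<\infty$ forces $\mathbf{T}$ to be thin with only finitely many leaves and only finitely many $f$-vertices, by Claim~\ref{clm:LFOR-w} and the discussion preceding it. For $h\ge N$ every vertex of $\mathbf{T}$ on level $h$ is then an i-vertex, so $V_h(\mathbf{T})=V_h(\mathbf{T}^i)$, and by Observation~\ref{obs:i-tree} (all leaves of $\mathbf{T}^i_h$ sit on level $h$) we get $|V_h(\mathbf{T})|=\lambda(\mathbf{T}^i_h)=\lambda(\mathbf{T}^i)=w$. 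Passing from $\mathbf{T}_{h-1}$ to $\mathbf{T}_h$ therefore adds exactly $w$ new vertices for each $h>N$, so with $M:=|V(\mathbf{T}_N)|$ we have $n_h=|V(\mathbf{T}_h)|=M+w(h-N)$ for all $h\ge N$, hence $h=N+(n_h-M)/w$, a degree-one polynomial in $n_h$ with nonzero slope $1/w$ (note $w\ge 1$ since $\mathbf{T}^i$ is an infinite, nonempty tree).

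Next I would invoke Theorem~\ref{thm:LFOR-thin-poly}: there is an $N'\in\nats$ such that for all $h\ge N'$ we have $s(\mathbf{T}_h)=P(h)$, where $P$ is a polynomial of degree $\lambda(\mathbf{T})=w$ with leading coefficient $\prod_{j=1}^f(1+s(\mathbf{T}(u_j)))$. For $h>\max\{N,N'\}$, substituting $h=N+(n_h-M)/w$ into $P$ expresses $s(\mathbf{T}_h)$ as $Q(n_h)$ for a polynomial $Q$ in the variable $n_h$; since composing a degree-$w$ polynomial with a degree-one polynomial again yields degree $w$, we get $\deg Q=w$. Finally, because $P(h)=\prod_{j=1}^f(1+s(\mathbf{T}(u_j)))\,h^w+(\text{lower order})$ and $h=n_h/w+O(1)$, the top term of $Q$ is $\prod_{j=1}^f(1+s(\mathbf{T}(u_j)))\,(n_h/w)^w=\tfrac{1}{w^w}\prod_{j=1}^f(1+s(\mathbf{T}(u_j)))\,n_h^w$, which is the asserted leading coefficient.

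I do not anticipate any real obstacle: the whole argument is a change of variables. The only points needing care are (a) justifying that for $h$ large the level $h$ of $\mathbf{T}$ is ``steady'' — exactly $w$ vertices, all i-vertices, no leaves or $f$-vertices below — which is precisely what the choice of $N$ (via Claim~\ref{clm:LFOR-w} and Observation~\ref{obs:i-tree}) delivers, and (b) recording $w\ge 1$ so that the substitution $h=N+(n_h-M)/w$ is legitimate. This mirrors exactly the LFOR-i-tree case handled in Corollary~\ref{cor:poly-i-vert}.
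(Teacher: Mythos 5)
Your argument is correct and takes essentially the same route as the paper: the paper likewise fixes $N$ so that all leaves lie below level $N$ and $\lambda(\mathbf{T}^i_h)$ has stabilized, notes that $n_h = M + w(h-N)$ for $h\geq N$, and then obtains the corollary from Theorem~\ref{thm:LFOR-thin-poly} by the linear change of variable $h = N + (n_h-M)/w$. Your extra verification that every vertex on a level $h\geq N$ is an i-vertex (so each new level contributes exactly $w$ vertices) simply makes explicit a step the paper leaves implicit.
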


\section{Summary}
\label{sec:summary}

We briefly discuss some of the main results in this article. The novel
contribution in this paper are from
Sections~\ref{sec:or-subtrees}, \ref{sec:infinite-OR-trees}
and~\ref{sec:LFOR-trees}.

In Section~\ref{sec:or-subtrees} we mimicked and slightly generalized methods
used in~\cite{Aho-Sloane} to obtain an exact formula for the number
of ordered rooted subtrees of a complete $k$-ary ordered rooted tree of
a given height $h\in\nats$. This formula, given in
Corollary~\ref{cor:t-closed-form} has a base number $c(k)\in\reals$ which
we then analyze and obtain a sharp asymptotic expression for it given
in Theorem~\ref{thm:c(k)}. This implies, in particular that $c(k)$ tends to
$2$ from above when $k$ tends to infinity.

In Section~\ref{sec:infinite-OR-trees} we defined locally finite ordered
rooted trees (LFOR-trees) and then LFOR-i-trees as LFOR-trees where
each vertex has infinitely many descendants. The first main result in this
section is Theorem~\ref{thm:poly-iff} which describes completely when
an LFOR-i-tree $\mathbf{T}$
has $s(\mathbf{T}_h)$; the total number of OR-subtrees of $\mathbf{T}_h$,
bounded by a polynomial in $h$. This occurs
iff $s(\mathbf{T}_h)$ is itself a polynomial in $h$,
iff $\mathbf{T}$ has a finite width $w(\mathbf{T})\in\nats$
when viewed as an infinite poset in a natural way
iff $\mathbf{T}$ can be decomposed into $w(\mathbf{T})$ infinite chains.
We then derive a simply expressible bounds for $s(\mathbf{T}_h)$ given
in Observation~\ref{obs:easy-s-bounds} and then obtain the threshold function
in terms of $h$ for when exactly the mentioned easy bounds will yield
a power function $h\mapsto h^{\lambda_h}$ that is asymptotically tight
for infinitely many integer values $h$ in Proposition~\ref{prp:power-h}.
Here $\lambda_h = \lambda(\mathbf{T}_h)$ is a parameter that
is easily obtained directly from $\mathbf{T}$. 
We express these threshold functions $h\mapsto\lambda_h$ in terms 
of the Lambert W~function and note that in Corollary~\ref{cor:Lambert}
they are all equivalent in that the ratio between any two of them tends
to $1$ when $h$ tends to infinity. 
This section was then concluded with
Theorem~\ref{thm:lambda-infty} which describes the properties of the function
$\ell(h)$ for which we have $s(\mathbf{T}_h) = (h+1)^{\ell(h)}$. 

In Section~\ref{sec:LFOR-trees} we considered general LFOR-trees and noted
that every LFOR-tree $\mathbf{T}$ has a unique (and maximum) LFOR-i-subtree
$\mathbf{T}^i$ consisting of all the vertices of $\mathbf{T}$ that have
infinitely many descendants. We showed that if $\mathbf{T}$ is fat, then
$s(\mathbf{T}_h)$ is never bounded from above by a polynomial in $h$. However,
not all thin LFOR-trees have $s(\mathbf{T}_h)$ bounded by a polynomial in $h$.
We showed that when the width $w(\mathbf{T})$ is finite, then $\mathbf{T}$
is thin and $s(\mathbf{T}_h)$ is a polynomial of degree $w(\mathbf{T}^i)$ with
a leading coefficient that can easily be expressed in terms of $\mathbf{T}$
for sufficiently large $h$.

\bibliographystyle{amsalpha}
\bibliography{Geirbib}

\end{document}